\documentclass[11pt, a4paper]{amsart}
\usepackage[english]{babel}
\usepackage[margin = 18mm]{geometry}

\usepackage[utf8]{inputenc}
\usepackage[T1]{fontenc}

\usepackage{amsaddr, amsmath, amsfonts, amssymb, mathtools, mathrsfs, stmaryrd}

\usepackage{tikz}
\usepackage{tikz-cd}
\usetikzlibrary{arrows,calc,shapes}
\usetikzlibrary{positioning}

\usepackage{ enumitem, float, booktabs, nicefrac, microtype, color, csquotes, mdframed, marginnote, lipsum, url, enumitem}

\usepackage[ backend=biber, style=numeric, sorting=nyt, giveninits=true, maxbibnames=99, doi=true, url=false, isbn=false, eprint=true ]{biblatex}

\renewbibmacro{in:}{
  \ifentrytype{article}
    {}
    {\printtext{\bibstring{in}\intitlepunct}}
}

\DeclareFieldFormat
  [article,incollection,inproceedings,book,thesis]
  {title}{\mkbibemph{#1}}

\DeclareFieldFormat[article]{title}{#1}

\DeclareFieldFormat{journaltitle}{\mkbibemph{#1}}
\DeclareFieldFormat{booktitle}{\mkbibemph{#1}}

\DeclareFieldFormat[article]{volume}{\mkbibbold{#1}}

\AtEveryBibitem{
  \clearlist{language}
  \clearfield{month}
  \clearfield{urldate}
}

\DeclareFieldFormat{doi}{
  \mkbibacro{DOI}\addcolon\space
  \href{https://doi.org/#1}{\nolinkurl{#1}}
}

\usepackage{hyperref}
\usepackage{cleveref}

\DeclareMathAlphabet{\pazocal}{OMS}{zplm}{m}{n}

\let\mathcal\pazocal

\newcommand{\K}{\mathsf{k}}

\DeclareMathOperator{\supp}{supp}

\newcommand{\remove}[1]{}

\theoremstyle{plain}
\newtheorem{theorem}{Theorem}[section]
\newtheorem{lemma}[theorem]{Lemma}
\newtheorem{corollary}[theorem]{Corollary}
\newtheorem{proposition}[theorem]{Proposition}

\theoremstyle{definition}

\newtheorem*{conclusion}{Conclusion}
\newtheorem*{openproblem}{Open Problem}
\newtheorem{definition}[theorem]{Definition}
\newtheorem{example}[theorem]{Example}
\newtheorem{notation}[theorem]{Notation}

\theoremstyle{remark}
\newtheorem{remark}[theorem]{Remark}

\newtheorem*{theorem*}{Theorem}
\newtheorem*{corollary*}{Corollary}

\begin{document}

\title{Properties of the $\mathcal V$-Monoid of Weighted Leavitt Path Algebras}

\author{Rishabh Goswami $^{a}$, Alfilgen Sebandal $^{b,c,d}$}
\email{rishabhgoswami.math@gmail.com, a.sebandal@rctpjagna.com}
\address{$^{a}$Department of Mathematics, North-Eastern Hill University, Shillong, India\\
$^{b}$Research Center for Theoretical Physics, Central Visayan Institute Foundation, Philippines\\
$^{c}$Department of Mathematics, Linnaeus University, V\"axj\"o, Sweden\\
$^{d}$Department of Mathematics, Central Mindanao University, Bukidnon, Philippines}

\keywords{Weighted Leavitt path algebras, V-monoid, weighted graph monoid, Graded Classification Conjecture}
\subjclass[2020]{16S88, 06F05, 16W50, 19K14}

\begin{abstract}
For a row-finite weighted graph $(E,w)$, Preusser showed that the monoid $\mathcal{V}(L_\K(E,w))$ of finitely generated projective modules over the weighted Leavitt path algebra $L_\K(E,w)$ is isomorphic to a combinatorially defined weighted graph monoid $\mathcal{M}(E,w)$. We study two structural properties of $\mathcal{M}(E,w)$: confluence and cancellativity. We introduce a reduction system on the free commutative monoid presenting $\mathcal{M}(E,w)$, obtain sufficient conditions for non-confluence by   constructing  explicit non-confluent triples, and provide a complete confluence characterization for certain classes of weighted graphs. Turning to cancellativity, we work within Preusser's class of weighted graphs satisfying Condition (LPA), for which $L_\K(E,w)$ is isomorphic to an unweighted Leavitt path algebra $L_\K(F)$ via a two-step construction. We introduce an auxiliary graph associated to the intermediate step of this construction and use it to give a graph-theoretic characterization of when $\mathcal{M}(E,w)$ is cancellative. Finally, under Condition (LPA), we show that Preusser's construction upgrades to a graded isomorphism $L_\K(E,w) \cong_{\operatorname{gr}} L_\K(F)$ with respect to the standard $\mathbb{Z}^{\lambda(E,w)}$-grading of weighted Leavitt path algebras, yielding $\mathcal{V}^{\operatorname{gr}}(L_\K(E,w)) \cong \mathcal{V}^{\operatorname{gr}}(L_\K(F))$ as $\mathbb{Z}^{\lambda(E,w)}$-monoids. 
\end{abstract}

\maketitle

\section{Introduction}

Throughout, $\K$ denotes a field. A unital ring $R$ has the \emph{Invariant Basis Number (IBN) Property}, if $R^m\cong R^n$ as $R$-modules implies $m=n$. The classical Leavitt algebras arose from Leavitt's study of rings without the IBN Property: for positive integers $m<n$, the \emph{Leavitt algebra} $L_\K(m,n)$ is universal with respect to an isomorphism $L_\K(m,n)^m\cong L_\K(m,n)^n$. \emph{Leavitt path algebras} were introduced by Abrams and Aranda Pino in 2005 \cite{AbramsArandaPino2005}, and independently by Ara, Moreno and Pardo in 2007 \cite{AraMorenoPardo07}. They contain the Leavitt algebra $L_\K(1,n)$ as a fundamental example. \emph{Weighted Leavitt path algebras} were introduced by Hazrat in 2013 \cite{Hazrat2013_The_Graded_Structure_of_LPAs}. The weighted construction extends the unweighted one and includes all Leavitt algebras $L_\K(m,n)$, providing motivation for studying weighted graphs and the algebraic invariants associated with them.

For a row-finite weighted graph $(E,w)$, Preusser \cite{PreusserVMonoid} introduced a commutative monoid $\mathcal{M}(E,w)$ and proved that
$$
\mathcal{V}\bigl(L_\K(E,w)\bigr)\cong\mathcal{M}(E,w),
$$
where $\mathcal{V}(L_\K(E,w))$ denotes the monoid of isomorphism classes of finitely generated projective modules over $L_\K(E,w)$; see \cite[Theorem~14]{PreusserVMonoid}. Thus, $\mathcal{M}(E,w)$, called the \emph{weighted graph monoid} associated to $(E,w)$, provides a combinatorial description of the $\mathcal{V}$-monoid of the weighted Leavitt path algebra. In contrast with the graph monoid of an unweighted graph, the presentation of $\mathcal{M}(E,w)$ may involve generators $q_i^v$ corresponding to the different weights emitted by a vertex. These generators are precisely what distinguish the general weighted setting from the vertex-weighted one.

When $(E,w)$ is vertex-weighted, every regular vertex emits edges of a single weight and the generators $q_i^v$ do not occur. The defining relations then take the simpler form
$$
w(v)v=\sum_{e\in s^{-1}(v)}r(e).
$$
Confluence of the corresponding reduction system of the \emph{reduced weighted graph monoid} was established in \cite{AbramsHazrat23}.  In 2026, Damalerio, Hazrat and Nam \cite{dhn2026} used this confluence property to prove that the graph monoid of an acyclic vertex-weighted graph is cancellative and that the \emph{talented monoid} associated with every vertex-weighted graph is cancellative; see \cite[Theorem~2.6 and Proposition~2.20]{dhn2026}.

The situation is substantially different when the vertex-weighted assumption is removed. If a vertex emits edges of several distinct weights, the presentation of $\mathcal{M}(E,w)$ involves the  generators $q_i^v$, and several different reductions may be applicable to the same element; see Definition \ref{weighted graph monoid}. The resulting descendants need not admit a common reduction. Consequently, the confluence argument available in the vertex-weighted case does not extend formally to arbitrary weighted graphs.

Moreover, the failure of confluence is not determined solely by whether a graph is multi-weighted. As our examples show, certain multi-weighted configurations are non-confluent, while interactions with the surrounding graph, such as feedback paths, can restore confluence. Thus one should not expect a direct analogue of the vertex-weighted confluence theorem in the general setting. This motivates a systematic investigation of the reduction system associated with $\mathcal{M}(E,w)$.

We carry out this investigation in Section~\ref{sec:weighted-graph-monoid}. We introduce a reduction relation on the free commutative monoid associated with the generators of $\mathcal{M}(E,w)$ and study its confluence. To describe failures of confluence explicitly, we introduce \emph{non-confluent triples} (Definition~\ref{def:nonconfluenttriples}) and construct such triples for several classes of weighted graphs, obtaining sufficient conditions for non-confluence (Lemma \ref{lemma:nonconfluent1-revision}, Proposition~\ref{prop:non-confluent2}). In particular, for \emph{weighted star graphs}, we prove that $\mathcal{M}(E,w)$ is confluent if and only if $(E,w)$ is vertex-weighted (Theorem~\ref{theo:nonclulent_n-stargraph}). In the pursuit of obtaining a general characterization, we obtain a class of weighted graphs which appears to suggest that a general criterion for confluence requires more elaborate and case-by-case considerations (Proposition~\ref{prop:confluent_iff_beta-alpha_geq1}). With this, we conclude the section with an open problem to characterize cancellativity for general weighted graphs.

The second problem considered in this paper is cancellativity which we address in Section \ref{subsec:Cancellativity}. For an unweighted graph $E$, Ara, Hazrat, Li and Sims \cite{AraHazratLiSims2018} proved that the graph monoid $\mathcal{M}(E)$ is cancellative if and only if no cycle in $E$ has an exit; see \cite[Lemma~5.5]{AraHazratLiSims2018}. Their argument makes essential use of the structural tools available for unweighted graphs. In particular, for arbitrary graphs one may pass to a row-finite graph by Drinen--Tomforde desingularisation \cite{Drinen_Tomforde:2005TheC*algebrasofarbitrarygraphs}, preserving the relevant Leavitt path algebra up to Morita equivalence and hence preserving the associated graph monoid.

There is no corresponding general reduction available for weighted graphs. In particular, the weighted Leavitt path algebra of an arbitrary weighted graph need not be isomorphic to an unweighted Leavitt path algebra. This prevents one from transferring a cycle criterion for row-finite weighted case directly to the general weighted setting. This obstruction is especially relevant here because the preceding confluence analysis already shows that the ordinary rewriting arguments do not extend uniformly to arbitrary weighted graph monoids.

There is, however, a distinguished class for which such a reduction is possible. Preusser \cite{Preusser2019WeightedLP} proved that for a row-finite weighted graph $(E,w)$, the weighted Leavitt path algebra $L_\K(E,w)$ is isomorphic to an unweighted Leavitt path algebra precisely when $(E,w)$ satisfies \emph{Condition (LPA)}. Thus, the restriction to Condition (LPA) in our study of cancellativity is not merely technical: it is exactly the setting in which the weighted problem can be transported to the ordinary graph case.
More precisely, if $(E,w)$ satisfies Condition (LPA), Preusser's construction proceeds in two steps:
$$
(E,w)~\xlongrightarrow{\textnormal{Step}~\mathcal{P}1}~(G,w')\xlongrightarrow{\textnormal{Step}~\mathcal{P}2}~F,
$$
where $(G,w')$ is an intermediate weighted graph and $F$ is an unweighted graph such that $L_\K(E,w)\cong L_\K(G,w')\cong  L_\K(F)$ as $\K$-algebras. We then utilize the immediate consequence $\mathcal{M}(E,w) \cong \mathcal{V}\bigl(L_\K(E,w)\bigr) \cong \mathcal{V}\bigl(L_\K(F)\bigr) \cong \mathcal{M}(F) $, where the cancellativity criterion for unweighted graphs can therefore be applied to $F$. Our problem is then to express the relevant cycle structure of $F$ directly in terms of the weighted data occurring before the second step of Preusser's construction.

For this purpose, we associate to the intermediate weighted graph $(G,w')$ the \emph{auxiliary graph} $\widetilde G$ (Definition~\ref{def:auxiliarygraph}), obtained by reversing the weighted edges. Applying Step $\mathcal{P}1$ of Preusser's construction for general \emph{sink-separated weighted graphs} (Definition~\ref{def:sink-separatedweightedgraph}) we characterize when the obtained unweighted graph $H$ has no cycle with an exit based on the auxiliary graph (Proposition~\ref{prop:characterization_CE1_CE2}), and then characterize when $H$ is acyclic (Corollary~\ref{cor:cycle-exit-2}). Utilizing this to Preusser's entire two-step construction, we provide a characterization to when the weighted graph monoid of a weighted graph satisfying Condition (LPA) is cancellative (Theorem~\ref{theo:M(E,w)CancellativeCharacterization}); and when the weighted graph is acyclic, the cancellativity characterization is reduced relative to the auxiliary graph (Corollary~\ref{cor:acyclic(LPA)_Cancellative}). 

The final part of the paper (Section \ref{subsec:Vgr-onoid}) considers the graded refinement of Preusser's construction. Under Condition (LPA), we show the isomorphism $L_\K(E,w) \cong L_\K(F)$ can be upgraded to a graded isomorphism with respect to the \emph{standard $\mathbb{Z}^{\lambda(E,w)}$-grading} (Theorem~\ref{theo:gradedisomorphism_of_LPA_graphs}), yielding $\mathcal{V}^{\operatorname{gr}}(L_\K(E,w)) \cong \mathcal{V}^{\operatorname{gr}}(L_\K(F))$ as $\mathbb{Z}^{\lambda(E,w)}$-monoids (Corollary~\ref{cor:Vgr(L(E,w))=Vgr(L(F))}).

Having described the paper's contributions, we summarize its organization as follows. Section~\ref{sec:preliminaries} recalls the necessary background. Section~\ref{sec:weighted-graph-monoid} studies confluence of $\mathcal{M}(E,w)$;  Section~\ref{sec:WLPAof(LPA)weightedgraphs} considers weighted graphs satisfying the Condition $(\mathrm{LPA})$: establishing a cancellativity criterion in Section~\ref{subsec:Cancellativity}, and concluding with  a graded refinement of Preusser's construction in Section~\ref{subsec:Vgr-onoid}.

\section{Preliminaries}\label{sec:preliminaries}
\subsection{(Unweighted) Graphs}
A \emph{directed graph} is a tuple $
E=(E^0,E^1,s,r)$,
where $E^0$ and $E^1$ are disjoint sets and $s,r:E^1\to E^0$ are maps. The elements of $E^0$ and $E^1$ are called \emph{vertices} and \emph{edges}, respectively. For $e\in E^1$, $s(e)$ is called the \emph{source} of $e$ and we say $s(e)$ \emph{emits} $e$, and $r(e)$ is called the \emph{range} of $e$. A vertex $v$ is called a \emph{source} (respectively, \emph{sink}) if $r^{-1}(v)=\varnothing$ (respectively, $s^{-1}(v)=\varnothing$). If $0<|s^{-1}(v)|<\infty$, then we say $v$ is \emph{regular}. We denote by  $\textnormal{Sink}(E)$ and $\textnormal{Reg}(E)$  the sets of sinks and regular vertices in $E$, respectively.

A (finite) \emph{path}  is a sequence of edges $
p=e_1e_2\cdots e_n$
with $r(e_i)=s(e_{i+1})$ for each $1\leq i\leq n-1$. Here, $n$ is the \emph{length} of $p$ and is denoted by $|p|$. The vertices are regarded as paths of length $0$. The source of $p$ is $s(p)=s(e_1)$ and its range is $r(p)=r(e_n)$. We set $s(v)=r(v)=v$ for every vertex $v$. The set of paths in $E$ is denoted by $\textnormal{Path}(E)$. If $p=e_1\cdots e_n$ has length $n>0$ with $s(e_1)=r(e_n)$ and $s(e_i)\neq s(e_j)$ for each $i\neq j$, then $p$ is called a \emph{cycle based at $s(p)$}. An edge $f$ for which $s(f)=s(e_i)$ for some $i=1,2,\dots, n$ and $f\neq e_j$ for all $j=1,2,\dots,n$ is called an \emph{exit} of the cycle $p$. Finally, we say $E$ is \emph{acyclic} if there are no cycles in $E$.

For $u,v\in E^0$, write $u\geq v$ if there exists a path $p\in\textnormal{Path}(E)$ such that $s(p)=u$ and $r(p)=v$. For a vertex $v\in E^0$, the \emph{tree of $u$} is the set $T(u):=\{w\in E^0~|~u\geq w\}$. For $X\subseteq E^0$, define the \emph{tree of $X$} to be the set $T(X):= \bigcup_{v\in X} T(v)$. Two edges $e,f\in E^1$ are said to be \emph{in line} if $e=f$, $r(e)\geq s(f)$ or $r(f)\geq s(e)$.

The graph $E$ is said to be \emph{row-finite} if $|s^{-1}(v)|<\infty$ for every $v\in E^0$, and \emph{finite} if both $E^0$ and $E^1$ are finite sets. Throughout, by a graph we mean a directed graph.

\subsection{Graph Algebras}
Using finite paths as building blocks, we obtain the following definition of the path algebra of a graph. We refer to \cite{abramsaramolinabook,Hazrat2013_The_Graded_Structure_of_LPAs,preusser2021weighted} for a detailed account of these algebras and their properties.  

\begin{definition}[Path algebra]

For a graph $E$ and a ring $R$ with identity, the \emph{path algebra} of $E$, denoted by $P_R(E)$, is the $R$-algebra freely generated by the sets $E^0$ and $E^1$, with coefficients in $R$, subject to the relations:
\begin{enumerate}[leftmargin=2cm]
    \item [(V)] $v_iv_j=\delta_{i,j}v_i$,   for every $v_i,v_j\in E^0,$
    \item [(E)] $s(e)e=e=er(e)  $,  for every $ e\in E^1.
$
\end{enumerate}
\end{definition}

For a field $\K$, the $\K$-vector space $L_\K(E)$ has basis $\textnormal{Path}(E)$ with multiplication given by concatenation of paths (which is encoded by relations \textnormal{(V)} and \textnormal{(E)}) defining the ring structure.
Throughout, we work with algebras over a field $\K$. To define Leavitt path algebras, we first define the double graph of a directed graph. 

\begin{definition}
    Let $E$ be a graph. The \emph{double graph} of $E$ is
$
\widehat{E}=(\widehat{E}^0,\widehat{E}^1,\widehat{s},\widehat{r}),
$
where
$\widehat{E}^0=E^0$,
$\widehat{E}^1=E^1\cup(E^1)^*$, $
(E^1)^*=\{e^*:e\in E^1\}$, and the maps $\widehat{s}, \widehat{r}: \widehat{E}^1\longrightarrow \widehat{E}^0$ are defined by 
\[
\widehat{s}(e)=s(e),\qquad
\widehat{r}(e)=r(e),\qquad
\widehat{s}(e^*)=r(e),\qquad \textnormal{and}\qquad 
\widehat{r}(e^*)=s(e)
\]
for every $e\in E^1$.
In other words, the elements of $(E^1)^*$ are edges with orientation opposite to that of their associated edges and are called \emph{ghost edges}.
\end{definition}
\vspace{-0.8cm}

\begin{center}
    
\begin{figure}[ht]
\centering
\begin{tikzpicture}[
  >={Stealth},
  vertex/.style={circle,fill=black,inner sep=1.2pt},
  every node/.style={font=\small},
  node distance=1cm
]

\node at (-5.5,0) {$E:$};
\node[vertex,label={[label distance=2pt]below:$u$}] (u1) at (-3,0) {};
\node[vertex,label=below right:$v$] (v1) at (-0.2,0) {};

\draw[->,shorten >=2pt,shorten <=2pt]
  (u1) to[bend left=25] node[above] {$f$} (v1);

\draw[->,shorten >=2pt,shorten <=2pt]
  (v1) to[bend left=25] node[below] {$g$} (u1);

\node at (1.7,0) {$\widehat{E}:$};
\node[vertex,label={[label distance=2.5pt]below:$u$}] (u2) at (4.0,0) {};
\node[vertex,label=below right:$v$] (v2) at (7.0,0) {};

\draw[->,shorten >=2pt,shorten <=2pt]
  (v2) to[bend left=-65] node[above] {$f^*$} (u2);

\draw[->,shorten >=2pt,shorten <=2pt]
  (u2) to[bend left=20] node[above] {$f$} (v2);

\draw[->,shorten >=2pt,shorten <=2pt]
  (u2) to[bend left=-65] node[below] {$g^*$} (v2);

\draw[->,shorten >=2pt,shorten <=2pt]
  (v2) to[bend left=20] node[below] {$g$} (u2);

\draw[->,shorten >=2pt,shorten <=2pt]
  (u1) edge[
    loop right,
    out=135,
    in=225,
    looseness=40
  ] node[above left] {$e$} (u1);

\draw[->,shorten >=2pt,shorten <=2pt]
  (u2) edge[
    loop right,
    out=105,
    in=195,
    looseness=40
  ] node[above left] {$e$} (u2);

\draw[->,shorten >=2pt,shorten <=2pt]
  (u2) edge[
    loop right,
    out=130,
    in=220,
    looseness=40
  ] node[below left] {$e^*$} (u2);
\end{tikzpicture}
\caption{A directed graph $E$ and its double graph $\widehat{E}$.}
\label{fig:doublegraph}
\end{figure}
\end{center}

\begin{definition}[Leavitt path algebra of a directed graph]
    
For a graph $E$ and a field $\K$, the \emph{Leavitt path algebra} of $E$, denoted by $L_\K(E)$, is the path algebra over the double graph $\widehat{E}$, subject to the \emph{Cuntz-Krieger relations}:

\begin{enumerate}[leftmargin=2cm]
    \item [(CK1)] $e^*f=\delta_{ef}r(e)$,
    for every $e,f\in E^1,$
    \item [(CK2)] $ \displaystyle\sum_{e\in s^{-1}(v)}ee^*=v$, for every $v\in\textnormal{Reg}(E)$.
\end{enumerate}
\end{definition}

\vspace{-1cm}
\begin{center}
    
{\centering
\begin{tikzpicture}[
  >=Stealth,
  vertex/.style={circle,fill=black,inner sep=1.5pt},
  every node/.style={font=\small}
]

\node at (-5.7,0) {$A_m:$};
\node[vertex,label=below:$v_1$] (a1) at (-4.3,0) {};
\node[vertex,label=below:$v_2$] (a2) at (-2.9,0) {};
\node[vertex,label=below:$v_3$] (a3) at (-1.5,0) {};
\node at (-0.8,0) {$\cdots$};
\node[vertex,label=below:$v_{m-1}$] (anminusone) at (-0.1,0) {};
\node[vertex,label=below:$v_m$] (an) at (1.3,0) {};

\draw[->,shorten >=2pt,shorten <=2pt] (a1) -- (a2);
\draw[->,shorten >=2pt,shorten <=2pt] (a2) -- (a3);
\draw[->,shorten >=2pt,shorten <=2pt] (anminusone) -- (an);

\node at (3.6,0) {$R_n:$};
\node[vertex,label=below:$u$] (r) at (6.3,0) {};

\draw[->,shorten >=2pt,shorten <=2pt]
  (r) to[loop left, 
  out=120, in=190, looseness=50]
  node[left] {$e_n$} (r);

\draw[->,shorten >=2pt,shorten <=2pt]
  (r) to[loop above, 
  out=65, in=135, looseness=50]
  node[above] {$e_1$} (r);

\draw[->,shorten >=2pt,shorten <=2pt]
  (r) to[loop right, 
  out=10, in=80, looseness=50]
  node[right] {$e_2$} (r);

\draw[->,shorten >=2pt,shorten <=2pt]
  (r) to[loop right, 
  out=-45, in=25, looseness=50]
  node[right] {$e_3$} (r);

\node at (5.85,-0.55) {$.$};
\node at (5.93,-0.65) {$.$};
\node at (6.06,-0.7) {$.$};

\end{tikzpicture}}
\end{center}

\vspace{-0.5cm}

\begin{example} Consider the graph $A_m$ and the $n$-\emph{petal rose graph} $R_n$, $m,n\geq 1$. 
\begin{enumerate}
    \item $L_\K(A_m) \cong M_m(\K)$, the algebra of $n\times n$ matrices over $\K$.  
    \item For $n=1$, $L_\K(R_1)\cong \K[x,x^{-1}]$, the Laurent polynomial algebra. For $n>1$,  $L_\K(R_n)\cong L_\K(1,n)$, the Leavitt algebra of module type $(1,n)$. 
\end{enumerate}\end{example}
To define an algebra that recovers all Leavitt algebras $L_\K(m,n)$ of any module type $(m,n)$ with $m<n$, Hazrat~\cite{Hazrat2013_The_Graded_Structure_of_LPAs} utilized the concept of \emph{weights} on a  graph.

\subsection{Weighted Graphs and their Leavitt-type algebra}
A \emph{weighted graph} is a pair $(E,w)$, where $E$ is a graph and $
w:E^1\rightarrow \mathbb{N}=\{1,2,\ldots\}$
is a map. For $e\in E^1$, $w(e)$ is called the \emph{weight} of $e$. Edges $e$ for which $w(e)>1$ (respectively, $w(e)=1$) are called \emph{weighted} (respectively, \emph{unweighted}) edges. 

In a weighted graph $(E,w)$, the arrow representing an edge $e$ with weight $w(e)$ shall be labelled by $``e, w(e)"$ (see Figure \ref{fig:weighted-associated-graph}).

A weighted graph $(E,w)$ is said to be:

\begin{enumerate}
    \item [(a)] \emph{unweighted} if all edges in $E$ are unweighted; 
    \item [(b)] \emph{vertex-weighted} if for every $v\in\textnormal{Reg}(E)$,
$
w(e)=w(f)$ for every $ e,f\in s^{-1}(v)$; 
\item [(c)] \emph{multi-weighted} if it is not vertex-weighted, that is, there exists $u\in E^0$ such that $w(e)\neq w(f)$ for some $e,f\in s^{-1}(u)$; 

\item [(d)] \emph{connected} if for every $u,v\in E^0$, one has $u\geq v$ in the double graph $\widehat{E}$; 

\item [(e)] \emph{row-finite} (respectively, \emph{finite}) if the underlying graph $E$ is row-finite (respectively, finite); 

\item [(f)] \emph{acyclic} if $E$ is acyclic; and

\end{enumerate}

\medskip
\noindent\textbf{Assumption.}
Throughout this paper, all weighted graphs are assumed to be nonempty, row-finite, and connected, as in \cite{preusser2021weighted}.
\medskip

\begin{definition}
Let $(E,w)$ be a weighted graph. The (\emph{unweighted})
\emph{directed graph associated to} $(E,w)$ is $E^w=((E^w)^0,(E^w)^1,s^w,r^w)$,
where
\[
(E^w)^0=E^0,~~
(E^w)^1=\{e_1,\ldots,e_{w(e)}:e\in E^1\},
\text{~~and~~}s^w(e_i)=s(e),
~~
r^w(e_i)=r(e)
\]
for every $e\in E^1$. 
Since $(E,w)$ is row-finite, $E_w$ is also row-finite.   
\end{definition}

\begin{example}
    
Consider the weighted graph $(E,w)$ below, with $w(e)=2$, $w(f)=3$, and $w(g)=1$. The associated directed graph $E^w$ is obtained by replacing each edge $h\in E^1$ with $w(h)$ copies.
\end{example}
\vspace{-0.5cm}

\begin{figure}[ht]
\centering
\begin{tikzpicture}[
  >={Stealth},
  vertex/.style={circle,fill=black,inner sep=1.2pt},
  every node/.style={font=\small},
  node distance=1cm
]

\node at (-5.5,0) {$(E,w):$};
\node[vertex,label=below:$u$] (u1) at (-3,0) {};
\node[vertex,label=below:$v$] (v1) at (-0.2,0) {};

\draw[->,shorten >=2pt,shorten <=2pt]
  (u1) to[bend left=25] node[above] {$f,3$} (v1);

\draw[->,shorten >=2pt,shorten <=2pt]
  (v1) to[bend left=25] node[below] {$g,1$} (u1);

\node at (1.7,0) {$E^w:$};
\node[vertex,label=below:$u$] (u2) at (4.0,0) {};
\node[vertex,label=below:$v$] (v2) at (7.0,0) {};

\draw[->,shorten >=2pt,shorten <=2pt]
  (u2) to[bend left=70] node[above] {$f_1$} (v2);

\draw[->,shorten >=2pt,shorten <=2pt]
  (u2) to[bend left=24] node[above] {$f_2$} (v2);

\draw[->,shorten >=2pt,shorten <=2pt]
  (u2) to[bend left=-10] node[above] {$f_3$} (v2);

\draw[->,shorten >=2pt,shorten <=2pt]
  (v2) to[bend left=40] node[below] {$g_1$} (u2);

\draw[->,shorten >=2pt,shorten <=2pt]
  (u1) edge[
    loop right,
    out=135,
    in=225,
    looseness=40
  ] node[above left] {$e,2$} (u1);

\draw[->,shorten >=2pt,shorten <=2pt]
  (u2) edge[
    loop right,
    out=105,
    in=195,
    looseness=40
  ] node[above left] {$e_1$} (u2);

\draw[->,shorten >=2pt,shorten <=2pt]
  (u2) edge[
    loop right,
    out=130,
    in=220,
    looseness=40
  ] node[below left] {$e_2$} (u2);

\end{tikzpicture}
\caption{Weighted graph $(E,w)$ and its associated directed graph $E^w$.}
\label{fig:weighted-associated-graph}
\end{figure}

\begin{remark}
    
We note that in~\cite{Preusser2019WeightedLP}, the double graph of a graph $E$ is denoted by $E_d$, while the graph associated to a weighted graph $(E,w)$ is denoted by $\widehat{E}$. Since the notation $\widehat{E}$ is commonly used in the Leavitt path algebra literature for the double graph, we use the notation $E^w$ for the associated directed graph here instead and keep the notation $\widehat{E}$ for the double graph of $E$.
\end{remark}

Let $(E,w)$ be a weighted graph. For every $v\in\textnormal{Reg}(E)$, set $
w(v):=\max\{w(e):e\in s^{-1}(v)\}<\infty$, and for a sink $s$, set $w(s)=0$. 

\begin{definition}(Weighted Leavitt path algebra)
    Let $(E,w)$ be a weighted graph and let $\K$ be a field. The \emph{weighted Leavitt path algebra} of $(E,w)$, denoted by $L_\K(E,w)$, is the $\K$-algebra
\[
L_\K(E,w)=P_\K(\widehat{E^w})/I,
\]
where $I$ is the ideal of the path algebra $P_\K(\widehat{E^w})$ generated by the relations

\begin{enumerate} [leftmargin=2cm]
    \item [(wCK1)] $\displaystyle\sum_{1\leq i\leq w(v)} e_i^*f_i = \delta_{ef}r(e)$, ~ for every $v\in\textnormal{Reg}(E)$ and $e,f\in s^{-1}(v)$; 

    \item [(wCK2)] $\displaystyle\sum_{e\in s^{-1}(v)} e_i e_j^* = \delta_{ij}v$, ~ for every $v\in\textnormal{Reg}(E)$ and $1\leq i,j\leq w(v)$.
\end{enumerate}

Whenever $i>w(e)$, we set $e_i=e_i^*=0$ in relations (wCK1) and (wCK2). If all edges have weight $1$, then one recovers the Leavitt path algebra $L_\K(E)$; in this case, relations (wCK1) and (wCK2) are precisely the Cuntz--Krieger relations \textnormal{(CK1)} and \textnormal{(CK2)}, respectively.
\end{definition}

\begin{example}
    
Consider the graph $R_n$ consisting of one vertex and $n$ loops $e_i$. Assigning weight $w(e_i)=m< n$ for each $i$, we have $L_\K(R_n,w)\cong L_\K(m,n)
$, the Leavitt algebra of module type $(m,n)$. 

\end{example}

\medskip

\section{Confluence of \texorpdfstring{$\mathcal{V}$-monoid}{V-monoid} of Weighted Leavitt path algebras}\label{sec:weighted-graph-monoid}

For a row-finite weighted graph $(E,w)$, Preusser introduced the commutative monoid $\mathcal{M}(E,w)$ and proved that $$ \mathcal{V}\bigl(L_\K(E,w)\bigr)\cong \mathcal{M}(E,w); $$ see \cite[Theorem~14]{PreusserVMonoid}. Thus, the structure of $\mathcal{M}(E,w)$ reflects the finitely generated projective modules over $L_\K(E,w)$. In this section, we recall the presentation of $\mathcal{M}(E,w)$ and study the reduction system arising from its defining relations. Our main aim is to investigate confluence beyond the vertex-weighted setting obtained in \cite{dhn2026}. We construct explicit non-confluent triples, obtain sufficient conditions for non-confluence, and completely characterize confluence for some classes of weighted graphs.

\begin{notation}
   Let $(E,w)$ be a row-finite weighted graph. For any vertex $v\in E^0$, set $w(s^{-1}(v)):= \{ w_1(v), \dots, w_{k_v}(v) \}$, where $k_v\geq 0$ and $w_1(v)<w_2(v)<\dots < w_{k_v}(v)$. That is, $k_v$ is the number of different weights of the edges in $s^{-1}(v)$. Thus,  $k_v=1$ for every $v\in \textnormal{Reg}(E)$ if and only if $(E,w)$ is vertex-weighted.  
\end{notation}

A monoid can be associated to a weighted graph, as described in \cite{PreusserVMonoid}. In this paper, as in \cite{dhn2026}, we refer to this monoid as the \emph{weighted graph monoid}.

\begin{definition}[{\cite[Definition 11]{PreusserVMonoid}}]\label{weighted graph monoid}
       Let $(E,w)$ be a row-finite weighted graph. The \emph{weighted graph monoid} associated with $(E,w)$, denoted by  $\mathcal{M}(E,w)$, is the free commutative monoid presented by the generating set  $ \mathcal{E}=\{ v, q_1^v,\dots, q^v_{k_v-1}~\vert~ v\in E^0\}$ with the relations
    \[ 
  q_{i-1}^v+(w_i(v)-w_{i-1}(v))v=q_i^v+\sum_{\substack{e\in s^{-1}(v),\\w(e)=w_i(v)}} r(e)
    \]
for each $v\in E^0$ and $1\leq i\leq k_v$, where we set $q_0^v=q_{k_v}^v=w_0(v)=0$.  
\end{definition}

Notice that since in this paper $(E,w)$ is assumed to be row-finite, $k_v\in \mathbb{N}\cup \{0\}$ for every $v\in E^0$.

\begin{example}\label{ex:weighted_graph_monoid}
Let $(E,w)$ be a row-finite vertex-weighted graph. Then $w(v)=w_1(v)$ and $k_v=1$ for all $v\in \textnormal{Reg}(E)$. Thus $\mathcal{M}(E,w)$ is generated by the set $\{  v~|~v\in E^0\}$ subject to the relation
\[
w(v)v=\sum_{e\in s^{-1}(v)} r(e)
\] for every $v\in E^0$. Notice that since $w(s)=0$ for every sink $s$, there are no relations for sinks. Thus, $\mathcal{M}(E,w)=M_{(E,w)}$, \emph{graph monoid} of $(E,w)$ in \cite[Definition 5.5]{AbramsHazrat23}. Furthermore, \[\mathcal{M}(E,w)/\langle s=0~|~s \text{~a sink}\rangle\] is the \emph{reduced graph monoid} $M(E,w)$ in \cite[Definition 2.7]{AbramsHazrat23} where the sink is assigned a weight of $1$. 
If $(E,w)$ is a row-finite unweighted graph, then $\mathcal{M}(E,w)=M_E$, first seen in \cite{AraMorenoPardo07}, defined as the free commutative generated by $\{v~|~v\in E^0\}$ subject to the relation
\[
v=\sum_{e\in s^{-1}(v)} r(e)
\] for every non-sink $v\in E^0$. 
\end{example}

\begin{remark} \label{rem:graph_monoid=reduced_graph_monoid} The graph monoid presented in \cite{AbramsHazrat23} is defined for row-finite vertex-weighted graphs which is a specific case of the monoid defined in \cite{PreusserVMonoid} for row-finite weighted graphs (which we denote here by $\mathcal{M}(E,w)$).
    The reduced graph monoid  in \cite{AbramsHazrat23} is defined for any row-finite weighted graph (possibly multi-weighted). However, we have the following which was also noted in \cite{AbramsHazrat23}. If $(E,w)$ is a sinkless vertex-weighted graph, then $M_{(E,w)}=M(E,w)$. If $(E,w)$ has sinks, then $M_{(E,w)}$ is equal to the reduced graph monoid $M(E',w')$ of some sinkless weighted graph $(E',w')$. In other words, for any vertex-weighted graph $(E,w)$, there exists a sinkless graph $(E',w')$ for which $M_{(E,w)}=M(E',w')$.
\end{remark}

\subsection{Confluence of Weighted Graph Monoid}

\begin{definition}
Let $A$ be a nonempty set and $\mapsto~\subseteq A\times A$. We call $\mapsto$ a \emph{reduction} or \emph{rewriting  on A} and the pair $(A, \mapsto)$ an \emph{abstract  reduction (or rewriting) system}. For $(a,b)\in~\mapsto$, we write $a\mapsto b$ and call $a\mapsto b$ a \emph{reduction step}, or simply a \emph{reduction}. Denote by $\mapsto^*$ the transitive reflexive closure of $\mapsto$. Hence, any reduction $a\mapsto^* b$ is a (finite) \emph{composition} of reductions $a\mapsto x_1\mapsto \dots\mapsto  x_n\mapsto b$. Such a sequence of reductions is called a \emph{chain}.

An element $x\in A$ is said to be \emph{reducible} if there exists $y\in A$ such that $y\neq x$ and $x\mapsto^*y$. In this case, we say $x$ \emph{is reduced to $y$}, and $y$ is a \emph{reduction} of $x$. If no such $y$ exists, we say $x$ \emph{cannot fire} or is \emph{irreducible}. We say $x, y\in A$ are \emph{joinable} if there exists $z\in A$ such that $x\mapsto^* z$ and $y\mapsto^* z$. 
\end{definition}

\begin{definition}
    Let $(A, \mapsto)$ be an abstract reduction system. We say $\mapsto$ is \emph{confluent} if for every $x, a, b\in A$ with $x \mapsto^* a$ and $x\mapsto^* b$, then $a$ and $b$ are joinable. That is, there exists $c\in A$ such that $a\mapsto^* c$ and $b\mapsto^* c$. In this case, we call $(A,\mapsto)$ a \emph{confluent} abstract reduction system. If no such element $c$ exists, we say $(A, \mapsto)$ is  \emph{non-confluent on  the element $x$}, or simply \emph{non-confluent}. 
\end{definition}
   
Our point of interest is to investigate the confluence of the weighted graph monoid $\mathcal{M}(E,w)$ where elements are equal when they ``flow'' to the same element. For this, we shall work on a certain abstract reduction system $(F_\mathcal{E}, \rightarrow)$ for which $\mathcal{M}(E,w)=F_\mathcal{E}/\sim$, where $\sim$ is the congruence relation generated by $\rightarrow^*$.  In \cite{AraMorenoPardo07}, the graph monoid of an unweighted graph was described by a relation on the free monoid on the set of vertices. In \cite{AbramsHazrat23}, the relation was called the  ``\textbf{r}-transform'' generalizing to the vertex-weighted graphs. We shall extend this to arbitrary weighted graphs.

For a non-empty set $S$, denote by $F_{S}$ the free commutative monoid on $S$. Let $(E,w)$ be a weighted graph and let  $\mathcal{E}= E^0\cup E^q$ where $E^q=\{ q_1^v,\dots, q^v_{k_v-1}~|~v\in E^0\}$. For each $v\in E^0$, set $w_0(v)=q_0^v=q_{k_v}^v=0$ and for each $1\leq i\leq k_v$.

For each $v\in \textnormal{Reg}(E)$ and  $1\leq i\leq k_v$ we define  $\textnormal{\textbf{r}}_{0,i}, \textnormal{\textbf{r}}_{1,i}: F_{E^0}\rightarrow F_{E^0}$ as follows:  
\[
\textnormal{\textbf{r}}_{0,i}(nv)=(n-w_i(v)+w_{i-1}(v))v ~~~(n>w_i(v)-w_{i-1}(v)) 
\]
 and
\[
\textnormal{\textbf{r}}_{1,i}((w_i(v)-w_{i-1}(v))(v))= \sum_{\substack{e\in s^{-1}(v),\\w(e)=w_i(v)}} r(e).
\]
Now for $v\in E^0$ and $1\leq i\leq k_v$, define the \emph{$\textnormal{\textbf{r}}_i$-transform} as the map $\textnormal{\textbf{r}}_i: F_{E^0}\rightarrow F_{E^0}$ given by
\[
\textnormal{\textbf{r}}_i(nv)=\textnormal{\textbf{r}}_{0,i}(nv)+\textnormal{\textbf{r}}_{1,i}((w_i(v)-w_{i-1}(v))(v))  ~~~(n>w_i(v)-w_{i-1}(v)).\]

 For each $v\in E^0$ and  $0\leq i\leq k_v-1$, define $\textnormal{\textbf{q}}_{0,i},\textnormal{\textbf{q}}_{1,i}: F_{E^q}\rightarrow F_{E^q}$  as follows: 
\[
\textnormal{\textbf{q}}_{0,i}(nq_{i}^v)= (n-1)q_{i}^v~~(n> 0)~
\text{~and~}
\textnormal{\textbf{q}}_{1,i}(q_{i}^v)= q_{i+1}^v.
\]
For $v\in E^0$ and $0\leq i\leq k_v-1$, define the \emph{$\textnormal{\textbf{q}}_i$-transform} as the map $\textnormal{\textbf{q}}_i: F_{E^q}\rightarrow F_{E^q}$ given by
\[\textnormal{\textbf{q}}_i(nq_{i}^v)= \textnormal{\textbf{q}}_{0,i}(nq_{i}^v)+\textnormal{\textbf{q}}_{1,i}(q_{i}^v) ~~(n> 0).\]

The elements of $F_{\mathcal{E}}$ are of the form
\[x=
\sum_{v\in E^0}\sum_{j=-1}^{k_v-1}\alpha_j^vq^v_j
\]
where for each $v\in E^0$, $q_{-1}^v:=v$ 
 and $\alpha_j^v\in \mathbb{N}\cup \{0\}$ (nonzero for a finite number) for every $-1\leq j\leq k_v-1$. 
 Define the reduction $\rightarrow$ on $F_{\mathcal{E}}$ for each $u\in E^0$ and $1\leq i\leq k_u$ by 
\begin{equation}
x=\sum_{v\in E^0}\sum_{j=-1}^{k_v-1}\alpha_j^vq^v_j~ \longrightarrow~ \sum_{v\neq u}\sum_{j=-1}^{k_v-1}\alpha_j^vq^v_j + \sum_{j\neq -1,i-1}\alpha_j^uq_j^u +\textnormal{\textbf{q}}_{i-1}{(\alpha_{i-1}^uq_{i-1}^u)} + \textnormal{\textbf{r}}_i(\alpha_{-1}^uq_{-1}^u), \tag{$\mathfrak{R}$} \label{reductions}
\end{equation}
whenever $\alpha_{i-1}^u\neq 0$ and $\alpha _{-1}^u\geq  w_i(u)-w_{i-1}(u)$. In other words, the reduction $\rightarrow$ on an element $x\in F_{\mathcal{E}}$ can only occur whenever there exist $u\in E^0$ and $1\leq i\leq k_u$ such that $q_{i-1}^u$ and $(w_i(u)-w_{i-1}(u))(u)$ are summands of $x$.

\begin{notation}\label{nota:Sigma(u)}
    For a weighted graph $(E,w)$ and for every $v\in \textnormal{Reg}(E)$ and $\alpha \in w(s^{-1}(u))$, denote 
    \[
    \Sigma_\alpha(u):= \sum_{\substack{e\in s^{-1}(u)\\ w(e)=\alpha}} r(e).
    \]
\end{notation} Then explicitly, for each $u\in E^0$ and $1\leq i\leq k_u$, we obtain the following \emph{reductions at $u$}
\begin{equation*}
 q_{i-1}^u+(w_i(u)-w_{i-1}(u))u~\longrightarrow~ q_i^u+\Sigma_{w_i(u)}(u)\quad (1\leq i\leq k_u).\tag{$\mathfrak{R}i(u)$} \label{reductions_at_u}
\end{equation*}
We write $x \xrightarrow{(\mathfrak{R}_i(u))} y$ when applying the reduction $(\mathfrak{R}_i(u))$ on the element $x$. If no confusion arises, we sometimes denote the reductions on $u$ simply by $(\mathfrak{R}i)$ or $(\mathfrak{R}(u))$. 
Let $\rightarrow^*$ be the transitive and reflexive closure of $\rightarrow$, that is, whenever $a\rightarrow^* b$, either $a=b$, or there exist $a_0, a_1, \dots , a_n\in F_{\mathcal{E}}$ such that $a=a_0\rightarrow a_1 \rightarrow a_2 \rightarrow\dots \rightarrow a_n=b$. Let $\sim$ be the congruence relation generated by $\rightarrow^*$. Then $a\sim b$ whenever there exist $a=a_0,a_1,a_2, \dots , a_{n}=b\in F_\mathcal{E}$ such that $a_i\rightarrow a_{i+1}$ or $a_{i+1}\rightarrow a_i$ for each $0\leq i \leq n-1$.  Therefore, 
\[
\mathcal{M}(E,w)=F_{\mathcal{E}}/\sim. 
\]

\begin{definition}
Let $(A,\mapsto)$ be an abstract reduction system and $\sim$ be the congruence relation generated by $\mapsto^*$. We say that $A/\sim$ is \emph{confluent} if for $a,b\in A$, $a\sim b$ if and only if $a$ and $b$ are joinable, that is, there exists $c\in A$ such that $a\rightarrow^* c$ and $b\rightarrow^* c$. 
\end{definition}

\begin{lemma}\label{lemma:diamondlemma-like}
        Let $(A, \mapsto)$ be an abstract reduction system and $\sim$ be the congruence relation generated by $\mapsto^*$. Then $(A, \mapsto)$ is confluent if and only if $A/\sim$ is confluent. 
\end{lemma}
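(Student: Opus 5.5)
The plan is to prove the classical Church--Rosser equivalence directly from the definitions, by induction on the length of a zig-zag chain.

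First I will fix the meaning of $\sim$. I take $a\sim b$ to mean that there is a zig-zag $a=a_0,a_1,\dots,a_n=b$ with $a_i\mapsto a_{i+1}$ or $a_{i+1}\mapsto a_i$ for each $i$, exactly as described for $F_{\mathcal E}$ just before the definition. In the setting of interest, $A=F_{\mathcal E}$, the word ``congruence'' adds nothing beyond this equivalence closure. The reason is that the reduction (\ref{reductions}) is compatible with addition: $x\to y$ implies $x+z\to y+z$, since the required summands $q_{i-1}^u$ and $(w_i(u)-w_{i-1}(u))u$ are still present in $x+z$. Hence the equivalence closure of $\to$ is already closed under addition. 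I expect this preliminary identification to be the only delicate point; the rest is routine.

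For ($\Leftarrow$), suppose $A/\sim$ is confluent, and let $x\mapsto^* a$ and $x\mapsto^* b$. Reading the chain from $a$ back to $x$ and then forward to $b$ gives a zig-zag, so $a\sim b$. By hypothesis $a$ and $b$ are joinable, which is precisely confluence of $(A,\mapsto)$.

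For ($\Rightarrow$), suppose $(A,\mapsto)$ is confluent. One implication is immediate: if $a\mapsto^* c$ and $b\mapsto^* c$, then $a\sim b$. For the converse, let $a\sim b$ via a zig-zag $a=a_0,\dots,a_n=b$. I will show by induction on $n$ that $a_0$ and $a_n$ are joinable. The case $n=0$ is trivial.

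For the inductive step, assume $a_0\mapsto^* c$ and $a_{n-1}\mapsto^* c$ for some $c$. There are two cases.
\begin{enumerate}
\item If $a_n\mapsto a_{n-1}$, then $a_n\mapsto^* c$, and $c$ joins $a_0$ and $a_n$.
\item If $a_{n-1}\mapsto a_n$, then $a_{n-1}$ reduces both to $c$ and to $a_n$. Confluence gives $d$ with $c\mapsto^* d$ and $a_n\mapsto^* d$. By transitivity $a_0\mapsto^* c\mapsto^* d$, so $d$ joins $a_0$ and $a_n$.
\end{enumerate}
This completes the induction, and hence the proof.
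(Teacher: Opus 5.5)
Your proof is correct and follows the paper's argument. The ($\Leftarrow$) direction uses the zig-zag $a \leftarrow x \to b$, and the ($\Rightarrow$) direction inducts on the length of the zig-zag with the same two cases for the last step; starting the induction at $n=0$ just streamlines the paper's separate base cases $n=1,2$, and your remark that on $F_{\mathcal E}$ the zig-zag closure is already a congruence (since reductions survive adding summands) makes explicit a point the paper leaves implicit.
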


\begin{proof}
        Suppose $A/\sim$ is confluent. Let $x,a,b\in A$ such that $x\mapsto^* a$ and $x\mapsto^* b$. Then $a\sim b$, that is, there exists $c\in A$ such that $a\mapsto^*c$ and $b\mapsto^*c$. Hence, $(A, \mapsto)$ is confluent.\\

Conversely, suppose $(A, \mapsto)$ is confluent and suppose $a\sim b$. Then there exists a string of length $n$ $a=a_0,a_1,a_2, \dots, a_n=b\in A$ for which $a_i\mapsto a_{i+1}$ or $a_{i+1}\mapsto a_i$. For $n=1$, the conclusion clearly holds. For $n=2$, we have the following cases: $a\mapsto a_1 \mapsto b$, $a\mapsto a_1\mapsfrom b$, $a\mapsfrom a_1 \mapsfrom b$, and $a\mapsfrom a_1 \mapsto b$. Clearly, for the first three reductions, the conclusion holds. For $a\mapsfrom a_1 \mapsto b$, there exists $c\in A$ such that $a\mapsto^* c$ and $b\mapsto^* c$, due to the confluence of $(A, \mapsto)$. Suppose the conclusion holds for strings of length less than $n$. Now, $a\sim a_{n-1}$ with a string of length $n-1$. Thus, there exists $c\in A$ such that $a\mapsto^* c$ and $b\mapsto^* c$.
\\

\noindent Case 1. $a_{n-1}\mapsto a_n=b$. Since $(A, \mapsto)$ is confluent, there exists $c' \in A$ such that $b\mapsto^* c'$ and $c\mapsto^* c'$. Thus, $a\mapsto^* c\mapsto^* c'$ and $b\mapsto^* c$. \\

\noindent Case 2. $a_{n-1}\mapsfrom b$. Then $b\mapsto a_{n-1}\mapsto^* c$. Thus, $a\mapsto^* c$ and $b\mapsto^* c$. \end{proof}

Confluence in the context of (unweighted) graph monoids was first seen in \cite{AraMorenoPardo07} and the version for the reduced graph monoid for weighted graphs was seen in \cite{AbramsHazrat23}. For vertex-weighted graphs, we have the following: 

\begin{lemma}[{\textnormal{\cite[Lemma 2.2]{
dhn2026}}}]\label{lem:confluence_vertex-weighted_graphs} Let $(E,w)$ be a vertex-weighted graph. Then  its associated weighted graph monoid $\mathcal{M}(E,w)$ is confluent.  
\end{lemma}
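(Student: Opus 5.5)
For a vertex-weighted graph we have $k_v=1$ for every regular vertex $v$, so $E^q=\varnothing$ and $F_{\mathcal{E}}=F_{E^0}$. The only reductions are then, for each regular $u$,
\[
x+w(u)u\longrightarrow x+\Sigma_{w(u)}(u),
\]
where the generator $q_0^u=0$ imposes no condition. By Lemma~\ref{lemma:diamondlemma-like} it suffices to prove that $(F_{E^0},\rightarrow)$ is confluent. The plan is to prove local confluence (the diamond property) first and then extend it to chains. Since the system need not terminate, Newman's lemma is unavailable, so I will prove a strong one-step diamond property and then tile.

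\textbf{Step 1: one-step diamond.} Suppose $x\to a$ by firing $u$ and $x\to b$ by firing $v$. If $u=v$ the two results coincide, since the reduction at $u$ is uniquely determined in the vertex-weighted case. If $u\neq v$, write $x=w(u)u+w(v)v+z$; this is possible because the coefficients of $u$ and $v$ in $x$ are at least $w(u)$ and $w(v)$ respectively, and these coefficients lie on distinct generators. Then
\[
a=\Sigma_{w(u)}(u)+w(v)v+z,\qquad b=w(u)u+\Sigma_{w(v)}(v)+z.
\]
Firing $v$ in $a$ and $u$ in $b$ gives the common element $c=\Sigma_{w(u)}(u)+\Sigma_{w(v)}(v)+z$. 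Each of these steps is legal because firing at one vertex only adds vertices and never decreases the coefficient of any other vertex. So the system has the diamond property with joins of length at most one on each side.

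\textbf{Step 2: tiling to full confluence.} Let $x\to^* a$ via $m$ steps and $x\to^* b$ via $n$ steps. Induct first on $m$ with $n=1$. Applying Step 1 to the first step of the $m$-chain and the single step gives two elements joinable in at most one step each. By induction, the rest of the $m$-chain and the one-or-zero-step branch are joinable, with the branch from $b$ again of length at most one. This is the standard strip lemma. A second induction on $n$ then tiles the full rectangle and gives $c$ with $a\to^*c$ and $b\to^*c$. Finally, Lemma~\ref{lemma:diamondlemma-like} yields that $\mathcal{M}(E,w)=F_{E^0}/\sim$ is confluent.

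The main point needing care is the bookkeeping in Step 1. We must verify that firing $u$ does not destroy the enabling summand $w(v)v$. This holds because $\Sigma_{w(u)}(u)$ is a sum of vertices added to the multiset, and $u\neq v$ means only the coefficient of $u$ decreases. This monotonicity is exactly what fails in the multi-weighted case, where the $q_i^u$ are consumed.
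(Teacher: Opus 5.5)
Your argument is correct. It takes a different route from the paper, which gives no proof of its own here and simply imports the statement from \cite[Lemma 2.2]{dhn2026}.

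You work directly in the reduction system $(F_{E^0},\to)$ of Section~3. In the vertex-weighted case each regular vertex has exactly one rule, and firing $u$ never lowers $c_v$ for $v\neq u$. Together these give the diamond property for the reflexive closure of $\to$. The strip-lemma tiling then yields confluence of $(F_{E^0},\to)$, hence of $\mathcal{M}(E,w)$ by Lemma~\ref{lemma:diamondlemma-like}.

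This buys several things:
\begin{itemize}
\item The argument is self-contained.
\item It needs no termination hypothesis. Unlike the arguments via Newman's Lemma~\ref{lem:Newman} used later in the section, it therefore covers graphs with cycles, where reduction chains need not terminate.
\item It gives a stronger quantitative statement: if $x\to^* a$ in $m$ steps and $x\to^* b$ in $n$ steps, there is $c$ with $a\to^* c$ in at most $n$ steps and $b\to^* c$ in at most $m$ steps.
\item It pinpoints what breaks in the multi-weighted setting. Rules at distinct vertices still commute there. However, two distinct rules $(\mathfrak{R}i(u))$ and $(\mathfrak{R}j(u))$ at the same vertex compete for the same copies of $u$, so your Step~1 fails, exactly as in Example~\ref{ex:nonconfluentS2}.
\end{itemize}

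There is one wording slip in Step~2. In the strip lemma, the branch of length at most one is the one from $a$ (the end of the $m$-chain, parallel to the single step). The branch from $b$ may need up to $m$ steps. With that correction, both inductions go through as you describe.
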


From then on, when talking about elements of the free monoid $F_\mathcal{E}$ where $F_\mathcal{E}=\{v, q_1^v, q_2^v, \dots, q_{k_v-1}^v~|~ v\in E^0  \}$ for a weighted graph $(E,w)$, irreducibility is understood to be relative to the reductions (\ref{reductions}).

\begin{example}\label{ex:nonconfluentS2}
        Let $(E,w)$ be the weighted graph shown below with weights $\alpha, \beta$,  $\alpha <\beta$.
        
\begin{center}

\begin{tikzpicture}[>=stealth, node distance=3cm, baseline=(u.base)]
    \node (E) at (-3.2,0) {$(E,w):$};
    \node (s1) at (-2,0) {$s_1$};
    \node (u)  at (0,0) {$u$};
    \node (s2) at (2,0) {$s_2$};

    \draw[->] (u) -- node[above] {$e_1,\alpha$} (s1);
    \draw[->] (u) -- node[above] {$e_2,\beta$} (s2);
\end{tikzpicture}

\end{center}

Then $k_u=2$,   $\mathcal{E}=\{u,s_1,s_2,q\}$ where $q:=q^u_1$,  and the abstract reduction system $(F_\mathcal{E}, \rightarrow)$ have the following reductions at $u$: 
\begin{center}
    $(\mathfrak{R}1) \quad \alpha u\longrightarrow q+s_1$ ~~~and~~~ $(\mathfrak{R}2) \quad q + (\beta -\alpha)u\longrightarrow s_2$,
\end{center}
together with the other possible reductions described in (\ref{reductions}) for general elements of $F_\mathcal{E}$. 
By the division algorithm, $\beta = m\alpha +d $, where $m\geq 1$ and $0\leq d < \alpha$. \\~\\
Case 1: $m=1$. Then $\beta=\alpha +d$, that is, $\beta \in [\alpha , 2\alpha)$. Then 
\[x_1:= q+\alpha u \stackrel{(\mathfrak{R}1)}{\longrightarrow} q+q+s_1=2q+s_1=: a_1\] and 
\[x_1:= q+\alpha u = q+d u +(\alpha -d )u \stackrel{(\mathfrak{R}2)}{\longrightarrow} s_2+ (\alpha-d)u=: b_1.\] 
Case 2: $\beta=m\alpha $, $m\geq 2$. Then 
\[
x_2:= q+\beta u = q+ (\beta -\alpha )u+ \alpha u     \stackrel{(\mathfrak{R}2)}{\longrightarrow } s_2+ \alpha u \stackrel{(\mathfrak{R}1)}{\longrightarrow} s_2+q+s_1 =: a_2
\]
and 
\[
x_2:= q+\beta u = q+m\alpha \stackrel{(\mathfrak{R}1)}{\longrightarrow} q+m(q+s_1)= (m+1)q+ms_1=: b_2 
\]
Case 3: $m\alpha <\beta <(m+1)\alpha $, $m\geq 2$. Then 
\[
q+\beta u = q+(\beta -\alpha) u + \alpha u\stackrel{(\mathfrak{R}2)}{\longrightarrow} s_2+ 2u \stackrel{(\mathfrak{R}1)}{\longrightarrow} s_2+q+s_1
\]
and 
\[
 q+\beta u = q+m\alpha u +du\stackrel{(\mathfrak{R}1)}{\longrightarrow} q+m(q+s_1) + du= (m+1)q+ms_1+du. 
\]
Subcase 3.1: $d<\beta -\alpha$. Then let $x_3:= q+\beta u$, $a_3:= s_2+q+s_1$ and $b_3:=(m+1)q+ms_1+du$.\\
Subcase 3.2: $d>\beta - \alpha$. Then $d=m'(\beta -\alpha) + d'$ for some $m'\geq 1$ and $0\leq d'<\beta-\alpha $. Let $x=\min \{m+1, m'\}$. Thus,
     \[(m+1)q+du+ms_1 = zq+(m+1-z)q+ z(\beta-\alpha)u+(m'-z)(\beta-\alpha)u+d'u
\]
 By applying $(\mathfrak{R}2)$ $z$-times,
 \[zq+(m+1-z)q+ z(\beta-\alpha)u+(m'-z)(\beta-\alpha)u+d'u
 \longrightarrow^*  zs_2+(m+1-z)q+(m'-z)(\beta -\alpha)u+d'u:= b_4.
\]
Let $x_4:= q+\beta u$ and $a_4:= s_2+q+s_1$. 

Then for each of the cases,  $x_i\rightarrow^* a_i$ and $x_i\rightarrow^* b_i$ with $a_i\neq b_i$ and $a_i$ and $b_i$ are irreducible $(i=1,2,3,4)$. Hence, $(F_\mathcal{E}, \rightarrow)$ is not confluent, and so is 
$\mathcal{M}(E,w)$.

\end{example}

We account such triples $(x, a, b)$ for which $x\rightarrow^* a$ and $x\rightarrow^* b$, where $a$ and $b$  are not joinable.

\begin{definition}\label{def:nonconfluenttriples}
Let $(A,\mapsto)$ be a non-confluent abstract reduction system. We call a triple $(x,a,b)\in A\times A\times A$ a 
\emph{non-confluent triple for $A$} if  $x\mapsto^* a$, $x\mapsto^* b$ and $a$ and $b$ are not joinable. Due to Lemma \ref{lemma:diamondlemma-like}, we also say such $(x,a,b)$ is a non-confluent triple in $A/\sim$. 
\end{definition}

\begin{example}
    
    Consider the non-confluent graph monoid $\mathcal{M}(E,w)$ in Example \ref{ex:nonconfluentS2}. Then $(x_i, a_i, b_i)$, $i=1,2,3,4$, is a non-confluent triple for each case therein.   
\end{example}

\begin{remark}
   Constructing explicit non-confluent triples
   as in Example \ref{ex:nonconfluentS2}
   can in general be a lengthy and technically involved process due to the large number of cases that must be considered. In many applications, however, an appropriate estimate of such a triple suffices, as to be illustrated in the proof of Lemma  \ref{lemma:nonconfluent1-revision}.

\end{remark}

\begin{definition} \label{def:*-graph}
    A connected graph $E=(E^0, E^1, s,r)$ is called a \emph{star graph} if
    \begin{enumerate}
        \item [(i)] $E^0 = \{u\}\cup S$, where $S$ is a nonempty set;
        \item  [(ii)] for each $e\in E^1$, 
$s(e)=u$ and $r(e)=s$ for some $s\in S$.
    \end{enumerate}
 
\end{definition}

\begin{remark} In Definition \ref{def:*-graph}, 
   it follows that the vertex $u$ is the unique  source and  $S$ is the set of sinks in $E$ and if $(E,w)$ is row-finite, then $|S|=n< \infty$. In this case, we shall call $(E,w)$ an \emph{$n$-star graph with source vertex $u$} or simply an \emph{$n$-star graph}. 
\end{remark}

\begin{example}
Below are examples of $n$-star graphs with $n=2$, $n=m+2$ ($m\in \mathbb{N}$) and $n=1$, respectively. 
\begin{center}
\begin{tikzpicture}[>=stealth, scale=1, every node/.style={scale=1}]
    \node at (-5.2,0) {$S_1:$};
    \node (s1a) at (-4.0,0) {$s_1$};
    \node (u1)  at (-2.8,0) {$u$};
    \node (s2a) at (-1.6,0) {$s_2$};

    \draw[->] (u1) -- (s1a);
    \draw[->] (u1) -- (s2a);

    \node at (0.0,0) {$S_2:$};
    \node (s1b) at (1.2,0.8) {$s_1$};
    \node (s2b) at (1.2,-0.8) {$s_2$};
    \node (u2)  at (2.5,0) {$u$};
    \node (s3b) at (4.0,0) {$s_3$};

    \draw[->] (u2) -- (s1b);
    \draw[->] (u2) -- (s2b);

    \draw[->, bend left=50] (u2) to node[above] {$e_1$} (s3b);
    \draw[->] (u2) to node[above] {$e_2$} (s3b);
    \draw[->, bend right=50] (u2) to node[below] {$e_m$} (s3b);

    \node at (3.25,-0.1) {$\vdots$};

    \node at (6.2,0) {$S_3:$};
    \node (u3)  at (7.2,0) {$u$};
    \node (s1c) at (8.8,0) {$s_1$};

    \draw[->] (u3) -- (s1c);
\end{tikzpicture}
\end{center}
\end{example}

\begin{definition}
\label{def:c_s,mu(s),support}    
Let $(E,w)$ be a weighted graph and $F_\mathcal{E}$ be the free monoid on $\mathcal{E}=\{v, q^v_1, \dots, q^v_{k_v-1}~|~ v\in E^0\}$. 
  For every $s \in \mathcal{E}$, define 
$ c_s: F_\mathcal{E}\longrightarrow \mathbb{N}$ by  \[
c_s\left( \sum_{w\in \mathcal{E}} \gamma_w w\right ) =\gamma_s. \]
For $x\in F_\mathcal{E}$, denote by $\supp(x)$ the \emph{support of $x$}, that is, $\supp(x)=\{ s\in \mathcal{E} ~|~ c_s(x)\neq 0 \}$. Let $\supp_0(x)=\supp(x)\setminus E^q=\{ s\in E^0~|~ c_s(x)\neq 0 \}$ which we shall call the \emph{vertex-support} of $x$. 

 Define $\mu:E^0\longrightarrow \mathbb{N}\cup \{\infty \}$ by 
\[
\mu(s):=
\begin{cases}
\min\{w_i(s)-w_{i-1}(s)~|~ 1\leq i\leq k_s\} & \textnormal{~if~} s\in \textnormal{Reg}(E),\\[4pt]
\infty & \textnormal{~if~} s\in \textnormal{Sink}(E),
\end{cases}
\] for every $s\in E^0$, where $w_0:=0$. 

\end{definition}  

\begin{remark}
Let $(E,w)$ be a weighted graph and $F_{\mathcal{E}}$ be the free monoid on $\mathcal{E}=\{v, q_1^v,\dots, q^v_{k_v-1}~|~v\in E^0\}$.

\begin{enumerate}
    \item The reduction (\ref{reductions_at_u}) at a vertex $u$ is applicable to  $x\in F_\mathcal{E}$ if and only if $q_{i-1}^u\in \supp(x)$ and $c_u(x)\geq  w_i(u)-w_{i-1}(u)$. 
\item Suppose any of the following are satisfied for $x\in F_\mathcal{E}$:
\begin{enumerate}
    \item [(i)]
  $\mu(s)>c_s(x)$ for every $s\in \supp_0(x)$;  \item [(ii)] $x\in F_{E^0}$ and  $w_1(s)>c_s(x)$ for every $s\in \supp_0(x)$; 
\item [(iii)] $\{q_{i-1}^u, (w_{i}(u)-w_{i-1}(u))u\}\not \subseteq \supp(x)\cup \{0\}$ for every $u\in E^0$ and $0\leq i\leq k_u$.
\end{enumerate}
Then $x$ is irreducible. 
\end{enumerate}

\end{remark}

\begin{lemma}
\label{lemma:nonconfluent1-revision}  Let $(E,w)$ be a multi-weighted graph. If there exists $u\in E^0$ with $k_u>1$ and $u\not \in T(u)\setminus \{u\}$, then $\mathcal{M}(E,w)$ is non-confluent. 
\end{lemma}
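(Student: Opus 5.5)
The plan is to exhibit an explicit non-confluent triple, as in Example~\ref{ex:nonconfluentS2}, with all elements supported on $T(u)$. The engine is a "$u$-part" invariant that exploits the hypothesis $u\notin T(u)\setminus\{u\}$. Fix $u$ with $k:=k_u>1$, and write $d_i:=w_i(u)-w_{i-1}(u)$ and $q_i:=q_i^u$.

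\textbf{Step 1 (isolating the $u$-part).} For $x\in F_\mathcal{E}$, define the \emph{$u$-part} of $x$ as $c_u(x)u+\sum_{i=1}^{k-1}c_{q_i}(x)q_i$. Take $x$ with $\supp_0(x)\subseteq T(u)$ and with no $q^v_j$ for $v\neq u$ in its support.
\begin{itemize}
\item A reduction at a vertex $v\neq u$ only produces vertices $r(e)$ with $e\in s^{-1}(v)$, together with $q^v_j$'s.
\item Every vertex of $T(u)\setminus\{u\}$ has a tree not containing $u$, since otherwise $u\in T(u)\setminus\{u\}$.
\item A reduction at $u$ produces only the $q_i$ and the ranges of edges out of $u$, which lie in $T(u)\setminus\{u\}$.
\end{itemize}
Hence the $u$-part of every descendant of $x$ changes \emph{only} through the reductions $(\mathfrak{R}i(u))$. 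In particular, once $c_u=0$, or once the $u$-part admits no $(\mathfrak{R}i(u))$, that $u$-part is frozen forever. So if $a\rightarrow^* c$ and $b\rightarrow^* c$, the $u$-parts evolve along chains of $u$-reductions only.

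\textbf{Step 2 (invariants).} On $u$-parts, put $\varphi(u)=1$ and $\varphi(q_i)=w_i(u)$, and let $\tau$ be the number of $q$-tokens $\sum_i c_{q_i}$.
\begin{itemize}
\item A reduction $(\mathfrak{R}i(u))$ with $1\le i<k$ preserves $\varphi$, because $w_{i-1}(u)+d_i=w_i(u)$.
\item $(\mathfrak{R}1(u))$ raises $\tau$ by one, and $(\mathfrak{R}i(u))$ for $1<i<k$ preserves $\tau$.
\item $(\mathfrak{R}k(u))$ lowers $\varphi$ by $w_k(u)$ and $\tau$ by one.
\end{itemize}
Thus every $u$-reduction after the first $\ell$ steps changes $(\varphi,\tau)$ in a controlled way, and $u$ itself is only consumed, never created.

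\textbf{Step 3 (the triple).} Write $w_k(u)=m\,w_1(u)+r$ with $0\le r<w_1(u)$; the split on $m$ is the delicate point.

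\emph{Case $m\ge2$.} Take $x=w_k(u)\,u$.
\begin{itemize}
\item Chain $a$: apply $(\mathfrak{R}1),\dots,(\mathfrak{R}k)$ in order. The result has $u$-part $0$, which is frozen, so any common descendant $c$ has $u$-part $0$.
\item Chain $b$: apply $(\mathfrak{R}1)$ $m$ times, giving $u$-part $mq_1+ru$ with $\varphi=w_k(u)$ and $\tau=m$.
\end{itemize}
From $b$, reaching $u$-part $0$ needs $\varphi$ to drop by exactly $w_k(u)$, that is, exactly one $(\mathfrak{R}k)$. No further $(\mathfrak{R}1)$ can fire since $r<w_1(u)$, so $\tau$ can fall to at most $m-1\ge1$. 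Hence $a,b$ are not joinable.

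\emph{Case $m=1$.} Take $x=q_1+w_1(u)\,u$, which makes sense because $k>1$.
\begin{itemize}
\item Chain $a$: apply $(\mathfrak{R}1)$, giving $u$-part $2q_1$. This is frozen since $c_u=0$.
\item Chain $b$: apply $(\mathfrak{R}2)$, which is possible since $d_2\le w_1(u)$. The $u$-part becomes $q_2+(2w_1(u)-w_2(u))u$, with $q_2:=0$ if $k=2$.
\end{itemize}
In $b$ the coefficient $c_u<w_1(u)$, so $(\mathfrak{R}1)$ never fires again. Therefore no descendant of $b$ contains $q_1$, and in particular none has $u$-part $2q_1$. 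Again $a,b$ are not joinable.

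In both cases $(x,a,b)$ is a non-confluent triple. By Lemma~\ref{lemma:diamondlemma-like}, $\mathcal{M}(E,w)$ is non-confluent. The main obstacle is Step 1: the rigorous claim that nothing outside $u$ regenerates $u$ or the $q_i^u$. This is exactly where the hypothesis $u\notin T(u)\setminus\{u\}$ is used, and where the support restriction to $T(u)$ must be maintained.
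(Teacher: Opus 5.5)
Your proof is correct and uses the same engine as the paper. Because no path of positive length returns to $u$, the coefficients of $u,q_1^u,\dots,q^u_{k_u-1}$ change only under the reductions $(\mathfrak{R}i(u))$. A $u$-part that admits none of these reductions is therefore frozen, and your Step~1 states this more explicitly than the paper does. Both you and the paper must read the hypothesis this way, since $u\notin T(u)\setminus\{u\}$ is literally vacuous.

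The differences are in the case split and in one of the witnesses.

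\begin{itemize}
\item The paper divides $w_2(u)$ by $w_1(u)$. For $w_2(u)<2w_1(u)$ it uses exactly your $m=1$ triple. For $w_2(u)\ge 2w_1(u)$ it takes $x=q_1+w_2(u)u$. Applying $(\mathfrak{R}2)$ then $(\mathfrak{R}1)$ gives $a=q_1+q_2+\Sigma_{w_1(u)}(u)+\Sigma_{w_2(u)}(u)$, and $n$ applications of $(\mathfrak{R}1)$ give $b=(n+1)q_1+n\Sigma_{w_1(u)}(u)+du$. In both cases the frozen coefficient $c_{q_1}$ alone separates the descendants ($2$ versus $0$, resp.\ $1$ versus $n+1$), and only $w_1(u)$ and $w_2(u)$ enter the argument.
\item You instead divide $w_k(u)$ by $w_1(u)$. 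For $w_k(u)\ge 2w_1(u)$ you start from the pure vertex element $w_k(u)u$ and run the full ladder $(\mathfrak{R}1),\dots,(\mathfrak{R}k)$ to annihilate the $u$-part. You then rule out reaching that empty $u$-part from $mq_1+ru$ using the pair $(\varphi,\tau)$.
\end{itemize}

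Your route costs some bookkeeping over all $k_u$ levels. In return it gives a witness in $F_{E^0}$ with no $q$-generator, and an invariant intrinsic to the local rewriting at $u$ rather than a single coefficient.

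One small fix: the phrase ``after the first $\ell$ steps'' in Step~2 refers to an undefined $\ell$ and should be deleted.
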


\begin{proof}
    Let $(E,w)$ be a multi-weighted graph and $F_\mathcal{E}$ be the free monoid on the generating set  $\mathcal{E}=\{v, q_1^v, q_2^v, \dots q_{k_v-1}~|~ v\in E^0\} $. Suppose there exists  $u\in E^0$ with $k_u>1$ and $u\not \in T(u)\setminus\{u\}$. Let $w(s^{-1}(u))=\{\alpha_1, \alpha_2,\dots, \alpha_{k_u}\}$, $q_i^u:= q_i$ and $\Sigma_i:=\Sigma_{\alpha_i}(u)$.  Then in $(F_\mathcal{E}, \rightarrow)$, the reductions at $u$ are:
    \begin{equation*}
{(\mathfrak{R}i)}\quad q_{i-1}+(\alpha_i-\alpha_{i-1}u)~\longrightarrow~ q_i+\Sigma_i\quad (1\leq i\leq k_u).~~~ 
\end{equation*}
By division algorithm, $\alpha_2=n\alpha_1+d$ where $n\geq 1$ and $0\leq d<\alpha_1$.\\

\noindent Case 1. $n=1$. Let $x=q_1+\alpha_1 u$. Then 
\[
x=q_1+\alpha_1u\xrightarrow{(\mathfrak{R}1)} q_1+(q_1+\Sigma_1)= 2q_1+\Sigma_1:= a. 
\]
Since $u\not \in T(u)\setminus \{ u\}$, $c_u(a)=0$.  Thus, for every $i$, reduction $(\mathfrak{R}i)$ is not applicable to $a$. This implies that for every reduction $a'$ of $a$ is of the form $2q_1+y$ with $u\not \in \supp(y)$. 
 
Since $n=1$,  $2\alpha_1-\alpha_2=2\alpha_1-(\alpha_1+d)=\alpha_1-d>0$. Hence, we can write $\alpha_1=(\alpha_2-\alpha_1)+(2\alpha_1-\alpha_2)$. Now, 
\[
x=q_1+\alpha_1u = q_1 +  (\alpha_2-\alpha_1)u+(2\alpha_1-\alpha_2)u\xrightarrow{(\mathfrak{R}2)} (q_2+\Sigma_2) + (2\alpha_1-\alpha_2)u:= b
\]
Moreover, since $n=1$ and $\alpha_2>\alpha_1$, $d\neq 0$. Since $u\not \in T(u)\setminus\{u\}$, $c_u(b)=2\alpha_1-\alpha_2=\alpha_1-d<\alpha_1$. Hence, $(\mathfrak{R}1)$ is not applicable to $b$ and since $q_1\not \in \supp(b)$, any reduction $b'$ of $b$ is of the form \[
b'= z +\sum_{j\geq 2}c_j{q_j}+fu
\]
where $c_j,f\in \mathbb{N}\cup \{0\}$, $f\leq 2\alpha_1-\alpha_2<\alpha_1$, and $z\in F_\mathcal{E}$ with $u\not \in \supp(z)$. Thus, for any reduction $b'$ of $b$, $q_1\not \in \supp(b')$. Hence, for any reduction  $a'$ of $a$ and $b'$ of $b$, $a'\neq b'$. 
\\

\noindent Case 2: $n\geq 2$. Let $x=q_1+\alpha_2 u$.  Then 
\[
x
=
[q_1+(\alpha_2-\alpha_1)u+\alpha_1 u]
\xlongrightarrow{(\mathfrak{R}2)}
(q_2+\Sigma_{2})+\alpha_1 u
\xlongrightarrow{(\mathfrak{R}1)} q_2+ 
\Sigma_2+(q_1+\Sigma_1)=q_1+q_2+\Sigma_1+\Sigma_2 := a.
\] 
Now, $u\not \in T(u)\setminus \{u\}$ implies that for any reduction $a'$ of $a$, $a'= q_1+q_2+y$ where $u,q_1,q_2\not \in \supp(y)$. \\

Now, $x=q_1+\alpha_2 u = q_1+(n\alpha_1 +d)u=q_1+n\alpha_1 u +d u$. Applying $(\mathfrak{R}1)$ $n$-times, 
\[
x
\longrightarrow^*
q_1+n(q_1+\Sigma_1)+d u=
(n+1)q_1+n\Sigma_1+d u:=b. 
\]
Since $u\not \in T(u)\setminus \{u\}$, $c_u(b)=d$. Since $\alpha_1>d$ and $\alpha_2-\alpha_1=n\alpha_1+d-\alpha_1=(n-1)\alpha_1+d>d$, it follows that $(\mathfrak{R}1)$ and $(\mathfrak{R}2)$ are not applicable to $b$. Moreover, since $u\not \in T(u)\setminus \{u\}$, for any reduction $b'$ of $b$, $b'= z+(n+1)q_1+fu$ where $q_1, q_2,u\not \in \supp(z)$ and $f\leq d$. Hence, for any reduction $a'$ of $a$ and $b'$ of $b$, $a'\neq b'$

In both cases, we obtain elements $x,a,b\in F_{\mathcal{E}}$ with $x\rightarrow^* a$,  $x\rightarrow^* b$ and for any reduction $a'$ of $a$ and $b'$ of $b$, $a'\neq b'$. Accordingly, we obtain a non-confluent triple $(x,a,b)$. Therefore, $(F_\mathcal{E}, \rightarrow)$ is non-confluent. By Lemma \ref{lemma:diamondlemma-like}, $\mathcal{M}(E,w)$ is non-confluent.
\end{proof}

\begin{theorem}
    
\label{theo:nonclulent_n-stargraph}   Let $(E,w)$ be a weighted $n$-star graph and $\mathcal{M}(E,w)$ its associated weighted graph monoid. 

Then $\mathcal{M}(E,w)$ is confluent if and only if $(E,w)$ is vertex-weighted.

\end{theorem}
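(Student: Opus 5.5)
The argument splits into the two implications; both reduce to results already stated.

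For the direction ``vertex-weighted $\Rightarrow$ confluent'', there is nothing new to prove. If $(E,w)$ is a vertex-weighted $n$-star graph, then Lemma~\ref{lem:confluence_vertex-weighted_graphs} applies directly and gives that $\mathcal{M}(E,w)$ is confluent. Concretely, the only regular vertex is $u$, which has $k_u=1$, and the sinks have no relations. The reduction system is then generated by the single rule $w_1(u)u\to\Sigma_{w_1(u)}(u)$. The right-hand side involves only sinks, so any two reduction sequences from $x$ simply remove multiples of $w_1(u)$ copies of $u$, and they can be joined by reducing until $c_u<w_1(u)$. This is just a sanity check; the citation suffices.

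For the converse, we argue by contraposition: suppose $(E,w)$ is not vertex-weighted. Since $u$ is the only vertex emitting edges in a star graph, it must be that $k_u\ge 2$.

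We then verify the hypothesis of Lemma~\ref{lemma:nonconfluent1-revision}, namely $u\notin T(u)\setminus\{u\}$. Every edge has source $u$ and range in the set of sinks $S$, with $u\notin S$. Hence any path of positive length starting at $u$ ends in $S$ after one step and cannot continue, so $T(u)=\{u\}\cup S'$ for some $S'\subseteq S$. In particular $u\notin T(u)\setminus\{u\}$. Lemma~\ref{lemma:nonconfluent1-revision} now gives that $\mathcal{M}(E,w)$ is non-confluent, which completes the contrapositive.

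There is no substantial obstacle here. The only point needing care is the case distinction in Lemma~\ref{lemma:nonconfluent1-revision}, which is carried out there and uses only the two smallest weights $\alpha_1<\alpha_2$ at $u$. The additional weights $\alpha_3,\dots$ and the extra generators $q_2,\dots$ do not interfere: the argument tracks only $c_u$ and the presence of $q_1$. Example~\ref{ex:nonconfluentS2} illustrates the case $n=2$ with $k_u=2$, but the general star case is covered entirely by the lemma.
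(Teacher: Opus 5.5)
Your proposal is correct and is essentially the paper's proof. The paper also derives the theorem directly from Lemma~\ref{lem:confluence_vertex-weighted_graphs} and Lemma~\ref{lemma:nonconfluent1-revision}, and adds explicit non-confluent triples only for completeness, handling the extra weights $\alpha_3,\dots,\alpha_m$ through the reductions $(\mathfrak{R}3),(\mathfrak{R}4),\dots$. Your check that no path of positive length from $u$ returns to $u$ is exactly the substantive content of the lemma's hypothesis $u\notin T(u)\setminus\{u\}$.
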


\begin{proof}

\noindent By Lemma \ref{lem:confluence_vertex-weighted_graphs} and Lemma \ref{lemma:nonconfluent1-revision}, the theorem directly follows. For completeness, we show an explicit computation of non-confluent triples for the multi-weighted $n$-star graph. 

Let $(E,w)$ be a  multi-weighted $n$-star graph with $E^0=\{u, s_1, s_2, \dots, s_n\}$, where the $s_i$'s are sinks, and weights $\alpha_1<\alpha_2<\dots <\alpha_m$. Then $k_u=m>1$ and $\mathcal{M}(E,w)$ is generated by the set $\mathcal{E}=\{ u, s_1, \dots, s_n, q_1, \dots, q_{m-1}\}$ where $q_i:= q_i^u$.
For each $i$, let $\displaystyle\Sigma_i:=\Sigma_{\alpha_i}(u)$. Then $(F_\mathcal{E}, \rightarrow)$ have the reductions
\[
(\mathfrak{R}i)~~~~~
  q_{i-1}+(\alpha_i-\alpha_{i-1})u~\longrightarrow~ q_i+\Sigma_i ~~
    \]
for each $1\leq i\leq k_u$, where we set $q_0=q_{m}=\alpha_0=0$, and together with the other possible reductions described  in (\ref{reductions}) for general elements of $F_\mathcal{E}$. By division algorithm, $\alpha_2=n\alpha_1+d$, where $n\geq 1$ and $0\leq d< \alpha_1$.\\

\noindent Case 1: $n=1$. Then $\alpha_2=\alpha_1+d$ where $0< d<\alpha_1$. Let $x:= q_1+\alpha_1u$, $a=2q_1+\Sigma_1$, and $b_1=\Sigma_2+q_2+(2\alpha_1 -\alpha_2)u$. 

As shown in the proof of Lemma \ref{lemma:nonconfluent1-revision}, $a$ is irreducible, $x\rightarrow^* a$, and $x\rightarrow^*b_1$. Since $c_u(b_1)=2\alpha_1-\alpha_2=2\alpha_1-(\alpha_1+d)=\alpha_1-d<\alpha_1$, $(\mathfrak{R}1)$ is not applicable to $b_1$. For $i\neq 1, 3$, $(\mathfrak{R}i)$ is not applicable to $b_1$ since $q_{i-1}\not \in \supp(b_1)$. Since $q_2\in \supp(b_1)$, $(\mathfrak{R}3)$ may be applied to $b_1$, depending on the value of $2\alpha_1-\alpha_2$. 

~\\
\noindent Subcase 1.1: $2\alpha_1<\alpha_3$. Then  $2\alpha_1-\alpha_2< \alpha_3-\alpha_2$ and  $(\mathfrak{R}3)$ is not applicable to $b_1$ and hence, $b_1$ is irreducible.\\
\noindent Subcase 1.2: $2\alpha_1\geq \alpha_3$. Then 
\[
b_1 \xrightarrow{(\mathfrak{R}3)} \Sigma_2+(2\alpha_1 -\alpha_2-(\alpha_3-\alpha_2))u+ (q_3+\Sigma_3)= \Sigma_2+\Sigma_3+q_3+(2\alpha_1 -\alpha_3)u:=b_2 
\]
Since $c_u(b_2)=2\alpha_1 -\alpha_3< \alpha_1$, $(\mathfrak{R}1)$ is not applicable to $b_2$. For $i\neq 1, 4$, $(\mathfrak{R}i)$ is not applicable to $b_2$ since $q_{i-1}\not \in \supp(b_2)$. Since $q_3\in \supp(b_2)$, $(\mathfrak{R}4)$ may be applied to $b_2$, depending on the value of $2\alpha_1 -\alpha_3$.\\

\noindent Subcase 1.2.1. $2\alpha_1 <\alpha_4$. Then   $2\alpha_1-\alpha_3< \alpha_4-\alpha_3$ and  $(\mathfrak{R}4)$ is not applicable to $b_2$ and hence, $b_2$ is irreducible. \\
Subcase 1.2.2. $2\alpha_1 \geq \alpha_4$. Then 
\[
b_2  \xrightarrow{(\mathfrak{R}4)} 
\Sigma_2+\Sigma_3+(2\alpha_1 -\alpha_3-(\alpha_4-\alpha_3))u+ (q_4+\Sigma_4)= \sum_{i=2}^4\Sigma_i+q_4+(2\alpha_1-\alpha_4)u:= b_3. 
\]

Continuing this process, we either have  $2\alpha_1<\alpha_{s+1}$ for some $2\leq s\leq m-1$ and we obtain an irreducible  element $b_{s-1}$ of the form
\[
b_{s-1}= \sum_{i=2}^s\Sigma_i+q_{s}+(2\alpha_1-\alpha_s)u, 
\]
or $2\alpha_1\geq \alpha_m$. In this case, we have the element
\[
b_{m-1}= \sum_{i=2}^m\Sigma_i+q_{m}+(2\alpha_1-\alpha_m)u = \sum_{i=2}^m\Sigma_i+(2\alpha_1-\alpha_m)u.
\]

Since $c_u(b_{m-1})=2\alpha_1-\alpha_m< 2\alpha_1-\alpha_2<\alpha_1$, $(\mathfrak{R}1)$ is not applicable to $b_{m-1}$. Since for all $i=1,2,\dots,  k_{u}$,  $q_{i-1}\not \in \supp(b_{m-1})$, $(\mathfrak{R}i)$ is not   applicable to $b_{m-1}$. Hence, $b_{m-1}$ is irreducible. Thus, for any $2\leq z\leq m$. 
\[
b_{z-1}= \sum_{i=2}^z\Sigma_i+q_{z}+(2\alpha_1-\alpha_z)u. 
\]
is irreducible. Since $q_1\not \in \supp(b_{z-1})$, $a\neq b_{z-1}$. Hence we have the non-confluent triple $(x,a,b_{z-1})$.
~\\

\noindent Case 2: $n>1$. Let $x=q_{1}+\alpha_{2}u$, $a= q_1+q_2+\Sigma_1+\Sigma_2$, and $b=n\Sigma_1+(n+1)q_1+d{u}$. 

As shown in the proof of Lemma \ref{lemma:nonconfluent1-revision}, $x\rightarrow^* a$ and $x\rightarrow^* b$. Clearly, $a$ is irreducible. 
Since $u\neq s_i$ for all $i$,  $d<\alpha_1$ and $d<(n-1)\alpha_1+d= \alpha_2-\alpha_1$, $c_u(b)=d<\mu(u)$. Since  $\supp_0(b)\setminus \{u\}\subseteq \textnormal{Sink}(E)$, it follows that $b$ is irreducible.  Clearly, $a\neq b$. Hence, we obtain a non-confluent triple $(x,a,b)$.

Therefore, $(F_\mathcal{E}, \rightarrow)$ is non-confluent. By Lemma \ref{lemma:diamondlemma-like}, $\mathcal{M}(E,w)$ is non-confluent. 
\end{proof}

The following corollary directly follows from the proof of Theorem \ref{theo:nonclulent_n-stargraph}.

\begin{corollary} 
    Let $(E,w)$ be a multi-weighted $n$-star graph with source vertex $u$ and weights $\alpha_1<\alpha_2<\dots <\alpha_m$. Writing $\alpha_2=n\alpha_1+d$, $0\leq  d<\alpha_1$, 
    then the following are non-confluent triples in $\mathcal{M}(E,w)$: 
    \[(x,a,b)=
    \begin{cases}
        \left (q_1+\alpha_1u,~~ 2q_1+\Sigma_1,~~\displaystyle\sum_{i=2}^z\Sigma_i+q_{z}+(2\alpha_1-a_z)u\right),  & n=1\\
        \left (q_1+\alpha_2u,~~ q_1+q_2+\Sigma_1+\Sigma_2,~~(n+1)q_1+n\Sigma_1+du
        \right ), & n>1.
    \end{cases}
    \]
where $q_i:=q_i^u$, $\Sigma_i:=\Sigma_{\alpha_i}(u)$, and with $S:=\{2\leq j\leq    m-1~|~2\alpha_1<\alpha_{j+1}\}$, 

\[z=\begin{cases}
 \min S & \textnormal{~if~}S\neq \varnothing,\\
 m & \textnormal{~if~}S=\varnothing. 
\end{cases}
\]

\end{corollary}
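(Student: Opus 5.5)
The plan is to read off the triples from the proof of Theorem \ref{theo:nonclulent_n-stargraph} and then check the one property the definition of a non-confluent triple requires that the theorem's proof did not spell out: \emph{non-joinability}, not just $a\neq b$. Throughout, $u$ is the only non-sink, so every reduction is some $(\mathfrak{R}i)$ at $u$. Each such step changes only $c_u$ and the $q_j$'s, and adds sinks.

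For $n>1$, I take $x=q_1+\alpha_2u$. First, $x\to^*a$ by applying $(\mathfrak{R}2)$ and then $(\mathfrak{R}1)$. Second, $x\to^*b$ by applying $(\mathfrak{R}1)$ $n$ times, as in Case~2 of the proof of Lemma \ref{lemma:nonconfluent1-revision}. Both $a$ and $b$ are irreducible:
\begin{enumerate}
\item[(i)] $c_u(a)=0$, so no $(\mathfrak{R}i)$ applies to $a$;
\item[(ii)] $c_u(b)=d<\mu(u)$ and every other vertex in $\supp_0(b)$ is a sink, so no $(\mathfrak{R}i)$ applies to $b$.
\end{enumerate}
Irreducible elements are joinable only if they are equal. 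Since $c_{q_1}(a)=1$ and $c_{q_1}(b)=n+1\ge 3$, we have $a\neq b$, so $(x,a,b)$ is a non-confluent triple.

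For $n=1$, I take $x=q_1+\alpha_1u$ and $a=2q_1+\Sigma_1$. Then $x\to a$ by $(\mathfrak{R}1)$, and $a$ is irreducible because $c_u(a)=0$. For $b$, I first apply $(\mathfrak{R}2)$ to $x$; this is possible because $\alpha_1\ge\alpha_2-\alpha_1$. I then follow the chain of Subcases 1.2, 1.2.2, \dots\ from the theorem's proof. The $j$-th step applies $(\mathfrak{R}(j+1))$, and it is available exactly while $2\alpha_1\ge\alpha_{j+1}$. So the chain stops at the first $z$ with $2\alpha_1<\alpha_{z+1}$, which is $z=\min S$, or runs all the way to $z=m$ if $S=\varnothing$ (using $q_m=0$). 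The resulting element is
\[
b=\sum_{i=2}^z\Sigma_i+q_z+(2\alpha_1-\alpha_z)u .
\]
(The displayed statement writes $a_z$; I will read this as $\alpha_z$.)

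Next I check that $b$ is irreducible. Its $u$-count is $2\alpha_1-\alpha_z<\alpha_1$, so $(\mathfrak{R}1)$ is blocked. Its only $q$ is $q_z$, so the only other candidate is $(\mathfrak{R}(z+1))$. This is blocked by the definition of $z$ when $z<m$, and there is no such reduction when $z=m$. Finally $c_{q_1}(b)=0\neq 2=c_{q_1}(a)$, so $a\neq b$, and two distinct irreducible elements are not joinable.

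The main obstacle is the index bookkeeping at the boundary $z=m$ versus $z=\min S$. In particular, I must confirm that the $u$-count stays nonnegative along the chain: $2\alpha_1-\alpha_j\ge 0$ is exactly the condition that $(\mathfrak{R}j)$ fired. I must also confirm that the edge case $z=2$, where $S$ contains $2$, gives $b=\Sigma_2+q_2+(2\alpha_1-\alpha_2)u$, which agrees with Subcase 1.1.
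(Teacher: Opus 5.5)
Your proposal is correct and is essentially the paper's argument: the triples are read off from the proof of Theorem~\ref{theo:nonclulent_n-stargraph}, and non-joinability follows because $a$ and $b$ are distinct irreducible elements. Your reading of $a_z$ as $\alpha_z$ is also correct.

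One step in (ii) needs repair. The claim $c_u(b)=d<\mu(u)$ is false in general when $m\ge 3$. For example, $(\alpha_1,\alpha_2,\alpha_3)=(2,5,6)$ gives $d=1=\mu(u)$. Argue instead that $q_1$ is the only $q$-generator in $\supp(b)$, so only $(\mathfrak{R}1)$ and $(\mathfrak{R}2)$ could fire. Both are blocked by $d<\alpha_1$ and $d<(n-1)\alpha_1+d=\alpha_2-\alpha_1$.

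The paper's proof of the theorem makes the same slip. The proof of Lemma~\ref{lemma:nonconfluent1-revision} gives the correct reason.
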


We now consider weighted graphs which contain a weighted star subgraph; see \cite{preusser2021weighted} for the definition of a weighted subgraph.

\begin{remark}
     Suppose $\mathcal{M}(E,w)$ is non-confluent. If $(E,w)$ has no weighted star subgraph. Then $(E,w)$ is vertex-weighted which implies that $\mathcal{M}(E,w)$ confluent, a contradiction. Hence, if $\mathcal{M}(E,w)$ is non-confluent, then $(E,w)$ necessarily has a multi-weighted star subgraph. 
\end{remark}

In Lemma \ref{lemma:nonconfluent1-revision}, the assumption that the source vertex $u$ of the multi-weighted star subgraph satisfies $u\not\in T(u)\setminus{u}$ is essential to the proof. This naturally raises the question of whether this assumption can be weakened. In particular, can the conclusion still hold under the weaker assumption $u\not\in r(s^{-1}(u))$?

\begin{example}\label{ex:confluent_case_1_not_satisfied}
    Consider the weighted graph $(E,w)$ below:
    
    \begin{center}
\begin{tikzpicture}[>=stealth, scale=1, every node/.style={scale=1}]
   
    \node at (0.0,0) {$(E,w):$};

    \node (u)  at (1.5,0) {$u$};
    \node (v) at (4.0,0) {$v$};

    \draw[->, bend left=50] (u) to node[above] {$e,2$} (v);
    \draw[->] (v) to node[above] {$g,1$} (u);
    \draw[->, bend right=50] (u) to node[below] {$f,3$} (v);
\end{tikzpicture}
\end{center}
Then $u\not \in r(s^{-1}(u))$ but $u\in T(u)\setminus \{u\}$.  

The monoid $F_\mathcal{E}$ is generated by $\mathcal{E}=\{u,v,q\}$ where $q:=q^u_1$ and we have the reductions at $u$ and $v$: 
\[
 2u ~\xlongrightarrow{(\mathfrak{R}1(u))}~ q+v, \qquad  q+u~ 
\xlongrightarrow{(\mathfrak{R}2(u))}~ v, \qquad \textnormal{~and~} \qquad  v ~
\xlongrightarrow{(\mathfrak{R}(v))} ~u. 
\]
Then for $x=q+2u$, $x\longrightarrow^* 2q+v$ and $x\longrightarrow^* v+u$. Now, 
\[
a:= 2q+v ~\xlongrightarrow{(\mathfrak{R}(v))}~ 2q+u ~\xlongrightarrow{(\mathfrak{R}2(u))}~ q+v
\]
and 
\[b:= v+u ~\xlongrightarrow{(\mathfrak{R}(v))}~2u~~\xlongrightarrow{(\mathfrak{R}1(u))}~q+v.
\]
Thus, $(x,a,b)$ is not a non-confluent triple for $\mathcal{M}(E,w)$. 
\end{example}

Consider Case 1 of Lemma \ref{lemma:nonconfluent1-revision} where the source vertex $u$ is such that  $w_2(u)=w_1(u)+d$ $(0< d<w_i(u))$. 
This is true for Example \ref{ex:confluent_case_1_not_satisfied} and we have seen that only having the condition $u\not \in r(s^{-1}(u))$ is not sufficient. In fact, the exact elements $x$, $a$, and $b$ in Example \ref{ex:confluent_case_1_not_satisfied} are the exact elements considered in  Case 1 of the proof of Lemma \ref{lemma:nonconfluent1-revision}.

Actually, we shall see in Example \ref{ex:confluent_case_1_not_satisfied_2} that $\mathcal{M}(E,w)$ is confluent. To establish this, we use the following classical result by Newman, commonly referred to in the modern literature as \emph{Newman's Lemma}:

\begin{lemma}[{\cite[Theorem~3]{Newman1942}}] \label{lem:Newman} Let $(A, \mapsto)$ be an abstract reduction system such that the following are satisfied: 

\begin{enumerate}
    \item [\textnormal{(1)}] Every chain of reductions in $(A,\mapsto)$ terminates. That is, for every chain $C:~x_1\mapsto x_2\mapsto \cdots \mapsto x_n \mapsto \cdots$, there exists an irreducible element $y$ such that  $C:~x_1\mapsto x_2\mapsto \cdots \mapsto x_n \mapsto \cdots \mapsto ~y $. 
    
    \item [\textnormal{(2)}] $(A, \mapsto)$ is locally confluent. That is, if $x\mapsto a$ and $x\mapsto b$, then there exists $c\in A$ such that $a\mapsto^* c$ and $b\mapsto^* c$.  
\end{enumerate} Then $(A, \mapsto)$ is confluent. 
\end{lemma}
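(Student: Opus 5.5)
We would prove this by \emph{Noetherian induction} along $\mapsto$, i.e.\ by a minimal-counterexample argument made possible by hypothesis (1). For $x\in A$, let $P(x)$ be the statement: for all $a,b\in A$ with $x\mapsto^* a$ and $x\mapsto^* b$, the elements $a$ and $b$ are joinable. Confluence of $(A,\mapsto)$ is exactly the statement that $P(x)$ holds for every $x\in A$.

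The first step is to justify the induction principle. We claim that if $P(x)$ fails, then $P(x')$ fails for some $x'$ with $x\mapsto x'$ (the inductive step below shows this). Granting this, suppose $P(x_1)$ fails. Choosing successively such $x_2, x_3,\dots$ (dependent choice) gives an infinite chain $x_1\mapsto x_2\mapsto\cdots$. None of its terms is irreducible, since each $x_n$ reduces to $x_{n+1}$. This contradicts hypothesis (1). Equivalently, it suffices to prove $P(x)$ under the inductive hypothesis that $P(y)$ holds for every $y$ with $x\mapsto y$, and hence, by iterating, for every $y$ with $x\mapsto^+ y$.

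The inductive step goes as follows. Let $x\mapsto^* a$ and $x\mapsto^* b$.
\begin{enumerate}
\item If $a=x$, take $c=b$; if $b=x$, take $c=a$.
\item Otherwise write $x\mapsto a_1\mapsto^* a$ and $x\mapsto b_1\mapsto^* b$.
\item By local confluence (2), there is $c$ with $a_1\mapsto^* c$ and $b_1\mapsto^* c$.
\item Apply $P(a_1)$ to the pair $a_1\mapsto^* a$, $a_1\mapsto^* c$. This yields $d$ with $a\mapsto^* d$ and $c\mapsto^* d$.
\item We have $b_1\mapsto^* c\mapsto^* d$ and $b_1\mapsto^* b$. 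Apply $P(b_1)$ to this pair. This yields $e$ with $d\mapsto^* e$ and $b\mapsto^* e$.
\item Then $a\mapsto^* d\mapsto^* e$ and $b\mapsto^* e$, so $a$ and $b$ are joinable and $P(x)$ holds.
\end{enumerate}
Read contrapositively, the same argument shows that if $P(x)$ fails, then $P(a_1)$ or $P(b_1)$ fails, which is the claim used in the first step.

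The only delicate point is the first step, namely that termination of every chain really licenses induction. The local diagram chase in the second step is routine once $P$ is available at the immediate successors $a_1$ and $b_1$. Note also that joinability via $e$ is precisely the conclusion required by the definition of confluence, so no appeal to Lemma~\ref{lemma:diamondlemma-like} is needed here.
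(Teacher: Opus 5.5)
Your proof is correct. The paper gives no argument for this lemma; it only quotes Theorem~3 of Newman's 1942 paper, whose original proof is considerably less direct than yours. You give the now-standard short proof by Noetherian induction. The local diagram chase shows that a failure of confluence at $x$ propagates to an immediate successor $a_1$ or $b_1$, and termination rules out an infinite chain of such failures.

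Writing it out makes explicit which form of hypothesis~(1) is actually used: there is no infinite chain $x_1\mapsto x_2\mapsto\cdots$ of reducible elements. That is the right reading. The weaker property that every element reduces to \emph{some} irreducible element would not suffice. For example, the system $b\mapsto a$, $b\mapsto c$, $c\mapsto b$, $c\mapsto d$ is locally confluent and every element has an irreducible reduct. Yet $b\mapsto^* a$ and $b\mapsto^* d$ with $a\neq d$ both irreducible.

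Two small points.

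First, you justify that no $x_n$ is irreducible by saying that $x_n$ reduces to $x_{n+1}$. Under the paper's definition, irreducible means there is no $y\neq x$ with $x\mapsto^* y$. So the justification fails if the step is a self-loop, and your contrapositive may indeed return $a_1=x$. Such loops occur in the paper's systems, e.g.\ $v\mapsto v$ for an unweighted vertex whose only edge is a loop. The paper's wording of (1) also seems designed to tolerate chains that stall at such an element.

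The repair is immediate. If $x$ is irreducible, then $x\mapsto^* a$ forces $a=x$, so $P(x)$ holds trivially. Hence every $x_n$ in your chain is reducible. The chain then violates (1) whether (1) is read as forbidding all infinite chains or only those never reaching an irreducible element.

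Second, the clause ``hence, by iterating, for every $y$ with $x\mapsto^+ y$'' is unnecessary, since you only use $P$ at $a_1$ and $b_1$.
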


\begin{example}\label{ex:confluent_case_1_not_satisfied_2}
    Consider the graph $(E,w)$ in Example \ref{ex:confluent_case_1_not_satisfied}. Every element of $F_\mathcal{E}$ is of the form $\alpha u +\beta v + \gamma q$ where $\alpha, \beta, \gamma\in \mathbb{N}\cup \{0\}$. If $\beta\neq 0$, using ($\mathfrak{R}(v)$) $\beta$-times,
    \[
    \alpha u +\beta v + \gamma q\longrightarrow^* (\alpha +\beta)u+\gamma q. 
    \] Now, 
    \[
    q+u ~\xrightarrow{(\mathfrak{R}2(u))}~v \xrightarrow{(\mathfrak{R}(v))} ~u. \]
    Thus for  $\lambda, \tau>0$,  
\[\lambda u+\tau q  ~ \xrightarrow{(\mathfrak{R}2(u))}~ (\lambda -1)u+(\tau -1)q+v ~\xrightarrow{(\mathfrak{R}(v))}~\lambda u+(\tau -1)q~\longrightarrow\dots \longrightarrow \lambda u.  \]
Now for $\lambda \geq 2$, 
\[
\lambda u =
(\lambda-2)u+2u~\xrightarrow{(\mathfrak{R}2(u))}~(\lambda-2)u+v~\xrightarrow{(\mathfrak{R}(v))} (\lambda-2)u+u =(\lambda -1)u ~\longrightarrow\dots \longrightarrow  u.  
\]
Moreover, 
$\epsilon v\longrightarrow^* \epsilon u$ for any $\epsilon>0$. Hence, for any $\alpha, \beta, \gamma\in \mathbb{N}$, 
\[
\alpha u +\beta v + \gamma q~\longrightarrow^*~ c =\begin{cases}
    u, & \alpha \neq 0\textnormal{~or~}\beta \neq 0, \\
    \gamma q, & \textnormal{otherwise}
\end{cases}. 
\]
This implies that every chain of reductions in $(F_\mathcal{E}, \rightarrow)$ terminates to either $u$ or $\gamma q$, which are irreducible. 

Let $x, a, b\in F_{\mathcal{E}}$ such that 
$x\xlongrightarrow{(R_1)} a$ and $x\xlongrightarrow{(R_2)} b$ for some reductions $(R_1), (R_2)\in \{ \mathfrak{R}1(u), \mathfrak{R}2(u), \mathfrak{R}(v)\}$. By symmetry, we have either $v\in \supp(a)\cap \supp(b)$, or $v\in \supp(a)$ and $u\in \supp(b)$. In both cases, $a\longrightarrow^* u$ and $b\longrightarrow^* u$. Hence, $(F_\mathcal{E}, \rightarrow)$ is locally confluent. By Newman's Lemma \ref{lem:Newman}, $(F_\mathcal{E}, \rightarrow)$ is  confluent. 
 
\end{example}

\begin{proposition}\label{prop:confluent_iff_beta-alpha_geq1}
Let $(E,w)$ be a weighted graph of the form 
\begin{center}
\begin{tikzpicture}[>=stealth, node distance=3cm, baseline=(u.base)]
    \node (E) at (-3.2,0) {$(E,w):$};
    \node (s1) at (-2,0) {$s_1$};
    \node (u)  at (0,0) {$u$};
    \node (s2) at (2,0) {$s_2$};

    \draw[->] (u) -- node[above] {$e_1,\alpha$} (s1);
    \draw[->] (u) -- node[above] {$e_2,\beta$} (s2);

\draw[->, bend right=50] (s1) to node[below] {$f_1,1$} (u);

\draw[->, bend left=50] (s2) to node[below] {$f_2,1$} (u);
\end{tikzpicture}
\end{center}
where $\alpha\leq \beta $. Then $\mathcal{M}(E,w)$ is confluent if and only if $\alpha=1$ or  $\beta-\alpha\leq 1$. 
\end{proposition}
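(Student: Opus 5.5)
The plan is to split according to whether the rewriting system $(F_\mathcal{E},\rightarrow)$ terminates. Here $\mathcal{E}=\{u,s_1,s_2,q\}$ with $q:=q_1^u$, and the reductions are
\[
\alpha u\xrightarrow{(\mathfrak{R}1)} q+s_1,\qquad q+(\beta-\alpha)u\xrightarrow{(\mathfrak{R}2)} s_2,\qquad s_1\xrightarrow{(\mathfrak{R}(s_1))}u,\qquad s_2\xrightarrow{(\mathfrak{R}(s_2))}u .
\]
If $\alpha=\beta$, the graph is vertex-weighted and Lemma \ref{lem:confluence_vertex-weighted_graphs} applies directly, so we may assume $\delta:=\beta-\alpha\geq 1$. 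The key bookkeeping is the pair $V(x):=c_u(x)+c_{s_1}(x)+c_{s_2}(x)$ and $Q(x):=c_q(x)$. The step $(\mathfrak{R}1)$ changes $(V,Q)$ by $(-(\alpha-1),+1)$, the step $(\mathfrak{R}2)$ by $(-(\delta-1),-1)$, and the steps at $s_1,s_2$ fix $V$ and $Q$ while lowering $c_{s_1}+c_{s_2}$.

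\emph{Non-confluence when $\alpha\geq2$ and $\delta\geq 2$.} In this case every $\mathfrak{R}$-step strictly lowers $V$, and the sink-moves lower $c_{s_1}+c_{s_2}$. Hence the lexicographic order on $(V,\,c_{s_1}+c_{s_2})$ shows that every chain terminates. The irreducible elements are then exactly $\lambda u+\gamma q$ with $\lambda<\alpha$ and ($\gamma=0$ or $\lambda<\delta$). Non-confluence therefore amounts to exhibiting one $x$ with two distinct normal forms. I would take $x=q+2u$ when $\alpha=2$ or $\delta=2$. For instance, with $\alpha=2$ the chain $x\to 2q+s_1\to 2q+u$ ends in an irreducible element, and when $\delta=2$ the alternative chain $x\to s_2\to u$ reaches a different normal form. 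For the general case I would choose $x=q+\min(\alpha,\delta)u$ or $x=q+\max(\alpha,\delta)u$, apply the two available first steps, and run both chains to their normal forms. Along these chains $Q$ records $\#(\mathfrak{R}1)-\#(\mathfrak{R}2)$ and $V$ records $(\alpha-1)\#(\mathfrak{R}1)+(\delta-1)\#(\mathfrak{R}2)$. One then checks that the two normal forms differ in $Q$ or in $c_u$; this is the same style of case split ($\alpha<\delta$, $\alpha=\delta$, $\alpha>\delta$) as in Example \ref{ex:nonconfluentS2}. By Lemma \ref{lemma:diamondlemma-like} this yields non-confluence of $\mathcal{M}(E,w)$.

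\emph{Confluence when $\alpha=1$ or $\delta=1$.} Here termination fails, since for example $u\to q+s_1\to q+u$ (when $\alpha=1$) or $q+u\to s_2\to u$ (when $\delta=1$). So Newman's Lemma \ref{lem:Newman} cannot be used verbatim. Instead I would imitate Example \ref{ex:confluent_case_1_not_satisfied_2}: for each $x$, name an explicit canonical target $c(x)$ and show that every $y$ with $x\rightarrow^* y$ still satisfies $y\rightarrow^* c(x)$. This gives confluence directly.

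For $\delta=1$ the step $q+u\to s_2\to u$ erases one $q$ whenever $u$ is present. Once all sinks are pushed to $u$ and the $q$'s are erased, the step $\alpha u\to q+s_1\to q+u\to u$ lowers $c_u$ by $\alpha-1$. So I expect $c(x)$ to be determined by $V(x)$ modulo $\alpha-1$ (taking the representative in $\{1,\dots,\alpha-1\}$, or just $u$ if $\alpha\leq 2$), and to be $\gamma q$ when $V(x)=0$. For $\alpha=1$ the loop $u\to q+u$ manufactures $q$'s freely, and $q+\delta u\to s_2\to u$ lowers $c_u$ by $\delta-1$. The candidate targets are then of the form $\lambda u$ with $\lambda$ reduced modulo $\delta-1$, possibly after adding $q$'s. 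The main obstacle is to find the correct invariant, presumably $V$ modulo $\gcd(\alpha-1,\delta-1)$ together with whether $V>0$, and to verify that it is preserved by all four reductions and separates the targets. This check is immediate from the $(V,Q)$ bookkeeping above, because any nonzero $V$ lets one freely raise $Q$ (if $\alpha=1$) or lower it (if $\delta=1$).
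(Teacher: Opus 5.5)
\textbf{The gap: the case $\alpha=1$, $\delta=\beta-\alpha\ge 2$.} Your confluence argument fails here. You commit to a single target $c(x)$ that every $y$ with $x\rightarrow^* y$ can reach, and no such target exists in this case.

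Put $\Phi:=(\delta-1)c_q-(c_u+c_{s_1}+c_{s_2})$. When $\alpha=1$:
\begin{itemize}
\item the step $u\to q+s_1$ raises $\Phi$ by $\delta-1$;
\item the step $q+\delta u\to s_2$ and the two sink moves leave $\Phi$ unchanged.
\end{itemize}
So $\Phi$ never decreases along a reduction. But if $V(x)>0$, the loop $u\to q+s_1\to q+u$ produces reducts of $x$ with arbitrarily large $\Phi$. A fixed $c(x)$ would need $\Phi(c(x))\ge\Phi(y)$ for all of them, which is impossible.

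A concrete instance: take $\beta=3$ and $x=u$. Every reduct has $V=1$, so $(\mathfrak{R}2)$ never fires and $c_q$ never decreases, yet $nq+u$ is a reduct for every $n$.

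Your hints (``possibly after adding $q$'s'', ``freely raise $Q$'') point the right way. The missing idea is that the joining element must depend on the pair $(a,b)$, not on $x$ alone. This is what the paper does:
\begin{itemize}
\item It uses that $V \bmod (\delta-1)$ and the condition $V>0$ are invariant under all reductions.
\item It reduces each of $a,b$ to $D'u+c_q(\cdot)q$, where $1\le D'\le\delta-1$ and $D'\equiv V(x)\pmod{\delta-1}$. This uses the combined move $u\to q+s_1\to q+u$ followed by $q+\delta u\to s_2\to u$, which lowers $V$ by $\delta-1$ and leaves $c_q$ unchanged.
\item It then pads both with $q$'s to the common element $D'u+\max\{c_q(a),c_q(b)\}q$.
\end{itemize}

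\textbf{Smaller corrections.}
\begin{itemize}
\item In the non-confluent regime $\alpha,\delta\ge 2$, your choice of $x$ needs fixing. $x=q+2u$ admits both first steps only when $\alpha=\delta=2$, and $x=q+\min(\alpha,\delta)u$ admits only one first step when $\alpha\neq\delta$. You need $x=q+\max(\alpha,\delta)u$, which is exactly the paper's choice ($q+\alpha u$ if $\delta\le\alpha$, $q+\delta u$ if $\delta>\alpha$).
\item With that choice the check is quick. The $(\mathfrak{R}2)$-first chain ends at the $q$-free normal form $(\max(\alpha,\delta)-\delta+1)u$. After an initial $(\mathfrak{R}1)$ one has $V<\delta$, so $(\mathfrak{R}2)$ never fires again and every normal form keeps $c_q\ge 2$.
\item For $\delta=1$ and $\alpha\ge 2$ the system does terminate, lexicographically on $(V,c_q,c_{s_1}+c_{s_2})$; the chain $q+u\to s_2\to u$ stops at $u$. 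That is why the paper can use Newman's Lemma there. Your canonical target $D_xu$ works in that case anyway.
\item For $\alpha=\delta=1$ both steps at $u$ preserve $V$, so the target must be $V(x)u$, not $u$.
\end{itemize}
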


\noindent\emph{Sketch of Proof:}  We first outline the proof, after which we provide the details. 
\begin{enumerate}
    \item Suppose that $\beta-\alpha\leq 1$. Then
    $\mathcal{M}(E,w)$ is confluent.
    
    \begin{enumerate}[leftmargin=2cm]
        \item [Case 1:] $\beta-\alpha=0$.
        
        \item [Case 2:] $\beta-\alpha=1$.
        
        \begin{enumerate}[leftmargin=2cm]
            \item [Subcase 2.1:] $\alpha=1$.
            
            \item [Subcase 2.2:] $\alpha>1$. Utilizing  Newman's Lemma \ref{lem:Newman}, we
            prove the following:
            
            \begin{enumerate}
                \item [Claim 1:] Every reduction chain in
                $F_\mathcal{E}$ terminates.
                
                \item [Claim 2:] $(F_\mathcal{E},\rightarrow)$ is
                locally confluent.
            \end{enumerate}
        \end{enumerate}
    \end{enumerate}
    
    \item Suppose that $\beta-\alpha>1$.
    
    \begin{enumerate}[leftmargin=2cm]
        \item [Case 1:] $\alpha>1$. Then
        $\mathcal{M}(E,w)$ is non-confluent.
        
        \begin{enumerate}[leftmargin=2cm]
            \item [Subcase 1.1:] $\beta-\alpha\leq\alpha$.
            
            \item [Subcase 1.2:] $\beta-\alpha>\alpha$.
        \end{enumerate}
        
        \item [Case 2:] $\alpha=1$. Then
        $\mathcal{M}(E,w)$ is confluent.
    \end{enumerate}
\end{enumerate}

\begin{proof} Suppose $\beta-\alpha \leq 1$.

\noindent Case 1: $\beta-\alpha =0$. Then $(E,w)$ is vertex-weighted. By Lemma \ref{lem:confluence_vertex-weighted_graphs}, $\mathcal{M}(E,w)$ is confluent. Assume that $\beta -\alpha>0$. Then $\mathcal{E}=\{u, s_1, s_2, q\}$ where $q:= q_1^u$ and the reductions at $u$ and $v$ are 
\[
 \alpha u ~\xlongrightarrow{(\mathfrak{R}1(u))}~ q+s_1, \qquad  q+(\beta-\alpha)u~ 
\xlongrightarrow{(\mathfrak{R}2(u))}~ s_2, \qquad s_1 ~
\xlongrightarrow{(\mathfrak{R}(s_1))} ~u,~  \textnormal{~and~} \qquad  s_2 ~
\xlongrightarrow{(\mathfrak{R}(s_2))} ~u. 
\]
Let $\varphi=c_u+c_{s_1}+c_{s_2}:F_{\mathcal{E}}\longrightarrow \mathbb{N}$ and $x\in F_\mathcal{E}$. If $\varphi(x)\neq 0$, then by $(\mathfrak{R}(s_1))$ and $(\mathfrak{R}(s_2))$, we have 
\begin{equation}\label{eqn:reduced}
x\longrightarrow^*\varphi(x)u+c_q(x)q.\tag{$*$}
\end{equation}
~\\
\noindent Case 2: $\beta - \alpha =1$. 
If $\varphi(x), c_q(x)\neq 0$, then 
\[x\longrightarrow^*\varphi(x)u+c_q(x)q  \xrightarrow{(\mathfrak{R}2(u))} (\varphi(x) -1)u+(c_q(x)-1)q+s_2 \xrightarrow{(\mathfrak{R}(s_2))} \varphi(x) u + (c_q(x) -1)q. 
\]
Repeating these reductions, we obtain $x\longrightarrow^* \varphi(x)u$. Hence, in this case for general $x\in F_\mathcal{E}$ with $\varphi(x)\neq 0$ ($c_q(x)$ could be $0$), $x\longrightarrow^* \varphi(x)u$.  \\

\noindent Subcase 2.1: $\alpha=1$. Then $\beta =2$. To prove confluence, we first show that  $\varphi$ is invariant under reductions, that is for any $a$ with $x\longrightarrow^* a$, $\varphi(x)=\varphi(a)$. Since every reduction is a composition of  $(\mathfrak{R}1(u)), (\mathfrak{R}2(u)), (\mathfrak{R}(s_1))$ and $(\mathfrak{R}(s_2))$, it suffices to verify the invariance of $\varphi$ for each of them. Let $x=\lambda u +\kappa s_1+\gamma s_2+\tau q\in F_{\mathcal{E}}$ for some $\lambda, \kappa, \gamma,\tau \in \mathbb{N}$. Then $\varphi(x)=\lambda +\kappa + \gamma.$

If $x\xlongrightarrow{(\mathfrak{R}1(u))} a$, then $\lambda\geq 1$ and
\[
x\xlongrightarrow{(\mathfrak{R}1(u))} a = (\lambda - 1)u +(\kappa +1)s_1+\gamma s_2+(\tau +1)q.
\]
Thus,  $\varphi(a)= \lambda -1 +\kappa+1+\gamma= \lambda +\kappa + \gamma$.

If $x\xlongrightarrow{(\mathfrak{R}2(u))} a$, then $\lambda,\tau \neq 0$ and 
\[
x\xlongrightarrow{(\mathfrak{R}2
(u))} a = (\lambda - 1)u +\kappa s_1+(\gamma+1) s_2+(\tau -1)q.
\]
Thus, $
\varphi(a)= \lambda -1 +\kappa+\gamma+1=\lambda+\kappa+\gamma$.

If  $x\xlongrightarrow{(\mathfrak{R}(s_1))} a$, then $\kappa \neq 0$ and 
\[
x\xlongrightarrow{(\mathfrak{R}
(s_1))} a = (\lambda+1) u +(\kappa-1) s_1+\gamma s_2+\tau q.
\]
Thus, $
\varphi(a)= \lambda +1 +\kappa-1+\gamma=\lambda+\kappa+\gamma$.

If  $x\xlongrightarrow{(\mathfrak{R}(s_2))} a$, then the previous argument applies similarly. \\

By (\ref{eqn:reduced}),  $x\longrightarrow^*\varphi(x) u + c_q(x) q$. If $\varphi(x)=0$, then $x=c_q(x) q$ and hence $x$ is irreducible. If  $\varphi(x) \neq 0$, then $x\longrightarrow^*\varphi(x) u + c_q(x) q\longrightarrow^*\varphi(x)u$. Let $a,b\in F_{\mathcal{E}}$ such that $x\longrightarrow^* a$ and $x\longrightarrow^* b$. Since $\varphi$ is invariant under reductions, $\varphi(a)=\varphi(b)=\varphi(x)$. If $\varphi(x)=0$, then $x$ is irreducible and hence, $a=b=x$. If $\varphi(x)\neq 0$, then $a\longrightarrow^* \varphi(a)u=\varphi
 (x)u$ and $b\longrightarrow^* \varphi(b)u=\varphi(x)u$. \\

\noindent Subcase 2.2: $\alpha>1$. We shall use Newman's Lemma \ref{lem:Newman} to show confluence. \\

\noindent Claim 1: Every chain of reductions in $F_{\mathcal{E}}$ terminates.
Let $x\in F_\mathcal{E}\setminus \{0\}$. If $\varphi(x)= 0$, then $x=c_q(x)q$, an irreducible element. If $\varphi(x)\neq 0$, then $x\longrightarrow^* \varphi(x)u$. If $\varphi(x) < \alpha$, then $\varphi(x) u $ is irreducible. Suppose $\varphi(x) \geq \alpha$. Then $\varphi(x) -1 \geq \alpha-1>0$. Thus,  $\varphi(x)-1=n(\alpha-1)+d$ for some $n\geq 1$ and $0\leq d<\alpha-1$. Thus, $\varphi(x) = n(\alpha-1)+D_x$ where $D_x=d+1$. Hence, $1\leq D_x\leq \alpha-1$. Now, 
\begin{equation*}
\begin{split}
\varphi(x) u = ~& (\varphi(x) -\alpha)u + \alpha u  \xrightarrow{(\mathfrak{R}1(u))}  (\varphi(x) -\alpha)u + (q+s_1)  \xrightarrow{(\mathfrak{R}(s_1))} (\varphi(x) -\alpha)u + (q+u) \\
& \xrightarrow{(\mathfrak{R}2(u))}(\varphi(x) -\alpha)u +s_2   \xrightarrow{(\mathfrak{R}(s_2))}  (\varphi(x) -\alpha)u +u   = (\varphi(x) -(\alpha-1))u. 
\end{split}
\end{equation*}
Hence, $\varphi(x)u \longrightarrow^* (\varphi(x) - n(\alpha-1))u= D_xu$. Accordingly, for any $x\in F_\mathcal{E}\setminus \{0\}$, 
\[
x~\longrightarrow^*~ c =\begin{cases}
    c_q(x)q & \textnormal{~if~} \varphi(x)= 0, \\ 
    D_xu & \textnormal{~if~} \varphi(x)\neq 0,
\end{cases}
\] where $D_x$ is the unique integer with $1\leq D_x\leq \alpha -1$ for which  
$\varphi(x) \equiv D_x \pmod{\alpha -1}$. Hence, every chain of reductions in $(F_\mathcal{E}, \rightarrow)$ terminates to either $Du$ for some $D<\alpha$, or $\tau q$ for some $\tau\in \mathbb{N}$, which are irreducible elements in $F_{\mathcal{E}}$.\\

\noindent Claim 2: $(F_\mathcal{E}, \rightarrow)$ is locally confluent. 
Let $x=\lambda u +\kappa s_1 + \gamma s_2+ \tau q\in F_{\mathcal{E}} $ and $ a \in F_{\mathcal{E}}$ such that 
$x\xlongrightarrow{(R)} a$ for some reduction $R= \mathfrak{R}1(u), \mathfrak{R}2(u), \mathfrak{R}(s_1),  \mathfrak{R}(s_2)$. Then $\varphi(x)=\lambda +\kappa +\gamma\neq 0$. It is enough to show that $\varphi(a) \equiv D_x \pmod{\alpha -1}$.

\noindent If $x\xlongrightarrow{(\mathfrak{R}1(u))} a$, then $\lambda\geq \alpha$ and
\[
x\xlongrightarrow{(\mathfrak{R}1(u))} a = (\lambda - \alpha)u +(\kappa +1)s_1+\gamma s_2+(\tau +1)q.
\]
Thus, \[
\varphi(a)= \lambda -\alpha +\kappa+1+\gamma= \lambda +\kappa + \gamma -(\alpha -1) \equiv D_x-(\alpha -1)\equiv D_x \pmod{\alpha -1}.\]

\noindent If $x\xlongrightarrow{(\mathfrak{R}2(u))} a$, then $\lambda,\tau \neq 0$ and similarly above, one can compute that
\[
\varphi(a)= \lambda+\kappa+\gamma\equiv D_x \pmod{\alpha -1}.\]

\noindent If  $x\xlongrightarrow{(\mathfrak{R}(s_1))} a$, then $\kappa \neq 0$ and  

\[
\varphi(a)=\lambda+\kappa+\gamma\equiv D_x \pmod{\alpha -1}.\]
\noindent If  $x\xlongrightarrow{(\mathfrak{R}(s_2))} a$, then the previous computations apply similarly.\\

Hence, $(F_\mathcal{E}, \rightarrow)$ is locally confluent. By Newman's Lemma \ref{lem:Newman}, $(F_\mathcal{E}, \rightarrow)$ is  confluent, and so is $\mathcal{M}(E,w)$.\\

Now, we assume that $d:=\beta -\alpha >1$ and show that $\mathcal{M}(E,w)$ is non-confluent for $\alpha>1$ and confluent for $\alpha=1$. In this case, the reductions at $u$ and $v$ are 
\[
 \alpha u ~\xlongrightarrow{(\mathfrak{R}1(u))}~ q+s_1, \qquad  q+du~ 
\xlongrightarrow{(\mathfrak{R}2(u))}~ s_2, \qquad s_1 ~
\xlongrightarrow{(\mathfrak{R}(s_1))} ~u,~  \textnormal{~and~} \qquad  s_2 ~
\xlongrightarrow{(\mathfrak{R}(s_2))} ~u. 
\]

\noindent Case 1: $\alpha>1$. \\
\noindent Subcase 1.1: $2\leq d\leq \alpha$. Let $x=q+\alpha u$. Then 
\[
x=q+\alpha u \xrightarrow{(\mathfrak{R}1(u))} q+q+s_1 \xrightarrow{(\mathfrak{R}(s_1))} q+q+u =2q+u := a. 
\] 
Since $c_u(a)=1<\alpha, d$, it follows that $a$ is irreducible. Now, 
\[
x=q+\alpha u =q+du+(\alpha-d)u \xrightarrow{(\mathfrak{R}2(u))} s_2 +(\alpha -d)u \xrightarrow{(\mathfrak{R}(s_2))} u+(\alpha -d)u=(\alpha -d+1)u:= b. 
\] Since $c_u(b)=\alpha -d+1< \alpha$, $b$ is irreducible. Since $q\not \in \supp(b)$, $a\neq b$. Thus, we obtain a non-confluent triple
\[
(q+\alpha u,~  2q+u,~ (\alpha-d+1)u). 
\]

\noindent Subcase 1.2: $d> \alpha$. Let $x=q+du$. Then 
\[
x=q+du \xrightarrow{(\mathfrak{R}2(u))} s_2 \xrightarrow{(\mathfrak{R}(s_2))} u:=a 
\]

Since $c_u(a)=1<\alpha$, $a$ is irreducible. We have $d=n(\alpha -1)+D$ for some $1\leq D\leq \alpha -1$. 
Hence, 
\[d u = (d -\alpha)u + \alpha u  \xrightarrow{(\mathfrak{R}1(u))}  (d -\alpha)u + (q+s_1)  \xrightarrow{(\mathfrak{R}(s_1))}  (d -\alpha)u + q+u  =
q+(d-(\alpha-1))u.
\]
Repeating the reductions, we obtain
$du\longrightarrow^* nq+(d - n(\alpha-1))u= nq+Du.  $
Thus, $x=q+du\longrightarrow^* (n+1)q+Du:=b$  where $D$ is the unique integer for which  
$ d \equiv D \pmod{\alpha -1}$ with $1\leq D\leq \alpha -1$. Since $c_u(b)=D<\alpha,d$, it follows that $b$ is irreducible. Since $q\not \in \supp(a)$, $a\neq b$. Hence, we obtain a non-confluent triple 
\[
(q+du,~u,~(n+1)q+Du). 
\]

 \noindent Case 2: $\alpha=1$. To prove confluence, we first show that  $\varphi=c_u+c_{s_1}+c_{s_2}:F_{\mathcal{E}}\longrightarrow \mathbb{Z}_{d-1}$ is invariant under reductions, that is for any $a$ with $x\longrightarrow^* a$, $\varphi(x)\equiv\varphi(a)\pmod{d-1}$.

 Let $x=\lambda u +\kappa s_1+\gamma s_2+\tau q$ for some $\lambda, \kappa, \gamma,\tau \in \mathbb{N}$. Then $\varphi(x)=\lambda +\kappa + \gamma\pmod{d-1}$.

\noindent Subcase 2.1: $x\xlongrightarrow{(\mathfrak{R}1(u))} a$. Then $\lambda\geq 1$ and
\[
x\xlongrightarrow{(\mathfrak{R}1(u))} a = (\lambda - 1)u +(\kappa +1)s_1+\gamma s_2+(\tau +1)q.
\]
Now, \[
\varphi(a)= \lambda -1 +\kappa+1+\gamma= \lambda +\kappa + \gamma=\varphi(x).\]

\noindent Subcase 2.2: $x\xlongrightarrow{(\mathfrak{R}2(u))} a$. Then $\lambda,\tau \neq 0$ and 
\[
x\xlongrightarrow{(\mathfrak{R}2
(u))} a = (\lambda - d)u +\kappa s_1+(\gamma+1) s_2+(\tau -1)q.
\]
Now, \[
\varphi(a)= \lambda -d +\kappa+\gamma+1=\lambda+\kappa+\gamma  -(d-1)\equiv \varphi(x)\pmod{d-1}.\]
\noindent Subcase 2.3: $x\xlongrightarrow{(\mathfrak{R}(s_1))} a$. Then $\kappa \neq 0$ and 
\[
x\xlongrightarrow{(\mathfrak{R}
(s_1))} a = (\lambda+1) u +(\kappa-1) s_1+\gamma s_2+\tau q.
\]
Now, \[
\varphi(a)= \lambda +1 +\kappa-1+\gamma=\lambda+\kappa+\gamma=\varphi(x).\]
\noindent Subcase 2.4: $x\xlongrightarrow{(\mathfrak{R}(s_2))} a$. The argument in Subcase 2.3 applies similarly to Subcase 2.4.\\

Now, let $x\in F_\mathcal{E}\setminus \{0\}$. If $\varphi(x)=0$, then $x=\tau q$ for some $\tau\in \mathbb{N}$, an irreducible element. Hence, we assume that $\varphi(x)>0$. Then by (\ref{eqn:reduced}), $(\mathfrak{R}1(u))$, and $(\mathfrak{R}(s_1))$, we have 
\[
x\longrightarrow^* \varphi(x)u +c_q(x)q\longrightarrow^* \varphi(x)u +(c_q(x)+1)q.
\]
Hence, $x\longrightarrow^*  \varphi(x)u + \tau q$ for any $\tau\geq c_q(x)$.
Now, whenever $\varphi(x)\geq d$, 
\[
x\longrightarrow^* \varphi(x)u + (c_q(x)+1)q \xrightarrow{(\mathfrak{R}2(u))}(\varphi(x) -d)u + c_q(x)q+ s_2\xrightarrow{(\mathfrak{R}(s_2))}(\varphi(x) -(d-1))u + c_q(x)q. 
\]
Repeating these reductions, we obtain 
\[
x\longrightarrow^* (\varphi(x) -(d-1))u + c_q(x)q \longrightarrow^* D'u+ c_q(x)q
\]
where $D'$ is the unique integer such that $1\leq D'\leq d-1$ and $\varphi(x)=m(d-1)+D'$. That is, $D'\equiv \varphi(x)\pmod{d-1}$. 

Now, let $a,b\in F_\mathcal{E}$ such that $x\longrightarrow^* a$ and $x\longrightarrow^* b$. Then $\varphi(a)\equiv \varphi(b)\equiv \varphi(x)\pmod{d-1}$. Take $z=\max\{c_q(a), c_q(b)\}$. Thus,  
\[
a\longrightarrow^* \varphi(a) u +c_q(a)q\longrightarrow^* D'u+ c_q(a)q\longrightarrow^* D'u+ zq\] and \[ b\longrightarrow^* \varphi(b) u +c_q(b)q\longrightarrow^* D'u+ c_q(b)q\longrightarrow^* D'u+ zq. 
\]\end{proof}

\begin{corollary}
Let $(E,w)$ be the weighted graph described in Proposition 
\ref{prop:confluent_iff_beta-alpha_geq1}
 with $\alpha,\beta-\alpha>1$. Then the following are non-confluent triples in $\mathcal{M}(E,w)$: 
\[
(x,a,b)=\begin{cases}
  (q+\alpha u,~  2q+u,~ (2\alpha -\beta +1)u) &  \textnormal{~if~} \beta \leq 2\alpha, \\
  (q+du,~u,~(n+1)q+Du) & \textnormal{~otherwise}, 
\end{cases}
\]
where $n$ and $D$ are the unique positive integers for which $\beta-\alpha = n(\alpha-1)+D$, $1\leq D\leq \alpha -1$.
\end{corollary}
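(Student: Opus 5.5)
The corollary is a repackaging of Case 1 (the case $\alpha>1$, $d=\beta-\alpha>1$) in the second half of the proof of Proposition~\ref{prop:confluent_iff_beta-alpha_geq1}. We only need to translate the subcase conditions and parameters into $\alpha,\beta$, and recheck irreducibility and non-joinability. The working relations are
\[
\alpha u\to q+s_1,\qquad q+du\to s_2,\qquad s_1\to u,\qquad s_2\to u,
\]
with $d=\beta-\alpha$. Subcase~1.1 there is $2\le d\le\alpha$, which is exactly $\beta\le 2\alpha$ once $d>1$ is given. Subcase~1.2 is $d>\alpha$, i.e. $\beta>2\alpha$.

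\emph{Case $\beta\le2\alpha$.} Take $x=q+\alpha u$.
\begin{itemize}
\item Applying $(\mathfrak{R}1(u))$ and then $(\mathfrak{R}(s_1))$ gives $a=2q+u$.
\item Writing $\alpha u=du+(\alpha-d)u$ and applying $(\mathfrak{R}2(u))$ and then $(\mathfrak{R}(s_2))$ gives $b=(\alpha-d+1)u=(2\alpha-\beta+1)u$.
\item $a$ is irreducible because $c_u(a)=1<\min(\alpha,d)$ and no $s_i$ occurs in it.
\item $b$ is irreducible because $q\notin\supp(b)$ and $c_u(b)=\alpha-d+1<\alpha$, using $d\ge2$.
\end{itemize}
Two distinct irreducible elements cannot be joinable: each is its own only reduction, and $a\ne b$ since $c_q(a)=2\neq0=c_q(b)$. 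So $(x,a,b)$ is a non-confluent triple.

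\emph{Case $\beta>2\alpha$.} Take $x=q+du$.
\begin{itemize}
\item $(\mathfrak{R}2(u))$ followed by $(\mathfrak{R}(s_2))$ gives $a=u$, which is irreducible since $\alpha>1$.
\item Each block $(\mathfrak{R}1(u))$ followed by $(\mathfrak{R}(s_1))$ turns $mu$ into $q+(m-(\alpha-1))u$. Writing $d=n(\alpha-1)+D$ with $1\le D\le\alpha-1$ and iterating $n$ times gives $b=(n+1)q+Du$.
\end{itemize}
The iteration is legitimate because every intermediate coefficient of $u$ is at least $D+(\alpha-1)\ge\alpha$. Also $n\ge1$, since $d>\alpha>\alpha-1\ge D$, which matches the statement's requirement that $n$ be positive. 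Then $b$ is irreducible, because $c_u(b)=D<\alpha<d$. Finally $a\ne b$, since $q\in\supp(b)\setminus\supp(a)$. Irreducibility of both again rules out joinability, and Lemma~\ref{lemma:diamondlemma-like} then transfers non-confluence to $\mathcal{M}(E,w)$.

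The only point I expect to need care is the bookkeeping of the parameters. The statement defines $n,D$ via $\beta-\alpha=n(\alpha-1)+D$ and writes $d$ for $\beta-\alpha$. I must verify that this decomposition is the one used in Subcase~1.2, and that the boundary case $d=\alpha$ (i.e. $\beta=2\alpha$) falls in the first case, where it indeed gives $b=u$ via the Subcase~1.1 computation.
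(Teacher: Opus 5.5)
Your proposal is correct and follows the paper's route: the corollary is Case~1 ($\alpha>1$, $d=\beta-\alpha>1$) of the second half of the proof of Proposition~\ref{prop:confluent_iff_beta-alpha_geq1}. Subcase~1.1 ($2\le d\le\alpha$) becomes $\beta\le 2\alpha$, Subcase~1.2 ($d>\alpha$) becomes $\beta>2\alpha$, and you use the same witnesses $x,a,b$, with irreducibility checked through $c_u$ and $\supp$. Your extra checks, that each $(\mathfrak{R}1(u))$--$(\mathfrak{R}(s_1))$ block is applicable (the coefficient of $u$ stays $\ge D+\alpha-1\ge\alpha$) and that $n\ge 1$, fill in details the paper leaves implicit.
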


In view of Lemma~\ref{lemma:nonconfluent1-revision} and Proposition~\ref{prop:confluent_iff_beta-alpha_geq1}, the condition that the source vertex $u$ of the multi-weighted star subgraph satisfies $u\notin T(u)\setminus\{u\}$ is sufficient to establish non-confluence, whereas weakening this to the condition $u\notin r(s^{-1}(u))$ alone is not. In the following proposition, for a weighted graph having a certain multi-weighted star subgraph, we impose additional conditions to having $u\notin r(s^{-1}(u))$ on the graph to ensure that the resulting reductions cannot be joined, thereby establishing non-confluence.

For the following result, recall Notation~\ref{nota:Sigma(u)} and Definition~\ref{def:c_s,mu(s),support}.

\begin{proposition}\label{prop:non-confluent2}
Let $(E,w)$ be a weighted graph. Suppose there exists $u\in E^0$ which emits edges of exactly two distinct weights $
\alpha<\beta $ and $u\not \in r(s^{-1}(u)) $. 
Write
$\beta=m\alpha+\delta$ where $m\geq 1$ and
$0\leq \delta<\alpha$ and assume that for every $s\in\supp_0(\Sigma_\alpha(u))\cup\supp_0(\Sigma_\beta(u))$,
the following condition holds:
\[
\mu(s)>
\begin{cases}
\max\{ c_s(\Sigma_\alpha(u)), c_s(\Sigma_\beta(u))\}
&\textnormal{~if~} m=1,\\[4pt]
\max\{c_s(\Sigma_\alpha(u)) + c_s(\Sigma_\beta(u)),\,m c_s(\Sigma_\alpha(u))\}
& \textnormal{~otherwise}.
\end{cases}
\]
Then $\mathcal{M}(E,w)$ is non-confluent.
\end{proposition}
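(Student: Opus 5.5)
We mimic the two cases of the proof of Lemma~\ref{lemma:nonconfluent1-revision}, using the same witnesses $x$. The new ingredient is that $u$ may lie in $T(u)\setminus\{u\}$. So instead of arguing that $u$ never reappears, we show that the two descendants $a,b$ are \emph{already irreducible}. Then they are joinable only if $a=b$, and we check $a\neq b$ directly. Write $q:=q_1^u$, $\Sigma_\alpha:=\Sigma_\alpha(u)$ and $\Sigma_\beta:=\Sigma_\beta(u)$. Since $k_u=2$, the reductions at $u$ are $\alpha u\to q+\Sigma_\alpha$ and $q+(\beta-\alpha)u\to\Sigma_\beta$. The hypothesis $u\notin r(s^{-1}(u))$ gives $c_u(\Sigma_\alpha)=c_u(\Sigma_\beta)=0$.

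\emph{Case $m=1$.} Take $x=q+\alpha u$. Applying $(\mathfrak{R}1)$ gives $a=2q+\Sigma_\alpha$. Since $\alpha=(\beta-\alpha)+(2\alpha-\beta)$ with $2\alpha-\beta=\alpha-\delta>0$, applying $(\mathfrak{R}2)$ gives $b=\Sigma_\beta+(2\alpha-\beta)u$.

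For irreducibility of $a$: we have $c_u(a)=0$, so no reduction at $u$ applies. The only generators of the form $q^v_j$ in $a$ are copies of $q^u_1$, so a reduction at a vertex $s\neq u$ needs $q^s_{i-1}=q^s_0=0$, that is $i=1$, and $c_s(a)\geq w_1(s)\geq\mu(s)$. But for $s\in\supp_0(\Sigma_\alpha)$ we have $c_s(a)=c_s(\Sigma_\alpha)<\mu(s)$ by hypothesis, so $a$ is irreducible.

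For $b$: we have $q\notin\supp(b)$ and $c_u(b)=\alpha-\delta<\alpha$, so neither reduction at $u$ applies. For $s\neq u$ in the vertex support, $c_s(b)=c_s(\Sigma_\beta)<\mu(s)$, so $b$ is irreducible. Finally $c_q(a)=2\neq0=c_q(b)$, hence $a\neq b$ and $(x,a,b)$ is a non-confluent triple.

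\emph{Case $m\ge2$.} Take $x=q+\beta u$. First apply $(\mathfrak{R}2)$ to $q+(\beta-\alpha)u$, then $(\mathfrak{R}1)$ to the remaining $\alpha u$. This gives $a=q+\Sigma_\alpha+\Sigma_\beta$. Alternatively, apply $(\mathfrak{R}1)$ $m$ times to get $b=(m+1)q+m\Sigma_\alpha+\delta u$.

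For $a$: $c_u(a)=0$, and for $s\neq u$ we have $c_s(a)=c_s(\Sigma_\alpha)+c_s(\Sigma_\beta)<\mu(s)$, which is exactly the first term of the hypothesis. For $b$: $c_u(b)=\delta<\alpha$ and $\delta<(m-1)\alpha+\delta=\beta-\alpha$, so neither reduction at $u$ applies. For $s\neq u$ we have $c_s(b)=m\,c_s(\Sigma_\alpha)<\mu(s)$, the second term of the hypothesis. Moreover $c_q(a)=1\neq m+1=c_q(b)$, so $a\neq b$.

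In both cases we obtain a non-confluent triple in $(F_{\mathcal{E}},\rightarrow)$, and Lemma~\ref{lemma:diamondlemma-like} transfers non-confluence to $\mathcal{M}(E,w)$.

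The main obstacle is the irreducibility check at vertices $s\neq u$. We must confirm that, because the only $q$-generators present are $q^u_1$, a reduction at $s$ can only be $(\mathfrak{R}1(s))$, and that this requires $c_s\geq w_1(s)\ge\mu(s)$. Sinks cause no trouble, since $\mu(s)=\infty$ for them. We also need care when a vertex $s$ appears in both $\Sigma_\alpha$ and $\Sigma_\beta$ (its counts add), and when $s$ might coincide with $u$ itself, which is excluded by $u\notin r(s^{-1}(u))$.
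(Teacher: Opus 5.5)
Your proposal is correct and follows the paper's proof essentially verbatim. It uses the same witnesses $x=q+\alpha u$ (for $m=1$) and $x=q+\beta u$ (for $m\ge 2$), the same descendants $a,b$, and the same irreducibility check via $c_s<\mu(s)$, with $c_u$ controlled by $u\notin r(s^{-1}(u))$. Your extra step, which restricts the possible reductions at $s\neq u$ to $(\mathfrak{R}1(s))$ by inspecting the $q$-generators, is harmless but unnecessary: any reduction $(\mathfrak{R}i(s))$ already requires $c_s\geq w_i(s)-w_{i-1}(s)\geq\mu(s)$.
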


\begin{proof}
We shall construct an explicit non-confluent triple for $\mathcal{M}(E,w)$. For $\gamma=\alpha, \beta$, let $\Sigma_\gamma:=\Sigma_\gamma(u)$. Moreover, let $q:=q_1^u$. Then the reductions at $u$ are
\[
(\mathfrak{R}1)~~
\alpha u\longrightarrow q+\Sigma_\alpha
\textnormal{~~~~and~~~~}
(\mathfrak{R}2)~~
q+(\beta-\alpha)u\longrightarrow \Sigma_\beta.
\]
Now, $
\beta=m\alpha+\delta$ for some
$
m\geq1$ and   $0\leq\delta<\alpha$.\\

\noindent Case 1: $m=1$. Then $\beta = \alpha +\delta$ with $1\leq \delta<\alpha $, that is,  $\alpha <\beta <2\alpha $. Let $x:= q+\alpha u$. Then 

\[    x = q+\alpha u \xlongrightarrow{(\mathfrak{R}1)} q+q+\Sigma_\alpha = 2q+\Sigma_\alpha := a. 
\] 
Note that $a$ is irreducible since for $s\in\supp_0(a)= \supp_0(\Sigma_\alpha(u))$, $\mu(s)>c_s(\Sigma_\alpha(u))=c_s(a)$. 
\\

Writing $\alpha=(\beta-\alpha)+(2\alpha-\beta)$, 
\[
x= q+\alpha u = q+(\beta-\alpha)u+(2\alpha-\beta)u \xrightarrow{(\mathfrak{R}2)} \Sigma_\beta + (2\alpha -\beta)u:=b
\]
Note that $b\in F_{E^0}$. Let $s\in \supp_0(b)$. If $s=u$, then $c_u(b)=2\alpha-\beta <\alpha=w_1(u)$ since $u\not \in r(s^{-1}(u))$ and since $m=1$. If $s\neq u$, then $c_s(b)=c_s(\Sigma_\beta)<\mu(s)\leq w_1(s)$. Thus, $b$ is irreducible.

Clearly, $a\neq b$. Hence, we obtain a non-confluent triple
\[
(q+\alpha u, 2q+\Sigma_\alpha, \Sigma_\beta+(2\alpha-\beta)u). 
\]
Case 2: $m\geq 2$. Let $x:= q+\beta  u$. Then 
\[
x
=
q+(\beta-\alpha)u+\alpha u
\xlongrightarrow{(\mathfrak{R}2)}
\Sigma_\beta+\alpha u
\xlongrightarrow{(\mathfrak{R}1)}
\Sigma_\beta+q+\Sigma_\alpha:= a. 
\] 
Note that $a$ is irreducible since for $s\in \supp_0(a)=\supp_0(\Sigma_\alpha)\cup\supp_0(\Sigma_\beta)$, $c_s(a)=c_s(\Sigma_\alpha)+c_s(\Sigma_\beta)<\mu(s)$.  \\

Now, $x=q+\beta u = q+(m\alpha +\delta)u=q+m\alpha u +\delta u$. Applying $(\mathfrak{R}1)$ $m$-times, 
\[
x
\longrightarrow^*
q+m(q+\Sigma_\alpha)+\delta u=
(m+1)q+m\Sigma_\alpha+\delta u:=b. 
\]
Let $s\in \supp_0(b)$. For $s=u$, we have $c_u(b)=\delta$ since $u\not \in r(s^{-1}(u))$. We have $\delta <\alpha$ and $\delta < (m-1)\alpha +\delta=  m\alpha +\delta - \alpha=
\beta -\alpha$ since $m\geq 2$. 
Thus, $c_u(b)<\min\{\alpha, \beta-\alpha \}= \mu(u)$. For $s\neq u$, $c_s(b)=mc_s(\Sigma_\alpha)<\mu(s)$, by assumption. Hence, $b$ is irreducible.

Clearly, $a$ and $b$ are distinct. Hence we obtain a non-confluent triple 
\[(q+\beta u, q+\Sigma_\beta+\Sigma_\alpha, (m+1)q+m\Sigma_\alpha+\delta u).\] 

Therefore, $\mathcal{M}(E,w)$ is non-confluent.\end{proof}

\begin{remark}
Consider again Example \ref{ex:confluent_case_1_not_satisfied}. Notice that for the vertex $v$, $\mu(v)=1=c_v(v)=c_v(\Sigma_2(u))= c_v(\Sigma_3(u))$ and indeed, the assumption of Proposition \ref{prop:non-confluent2} is not satisfied. 

\end{remark}

\begin{corollary}
    Let $(E,w)$ be the weighted graph described in Proposition \ref{prop:non-confluent2}. The following are  non-confluent triples in  $\mathcal{M}(E,w)$: 
    \[
(x,a,b)=\begin{cases}
 (q+\alpha u,~~2q+\Sigma_\alpha(u),~~\Sigma_\beta(u)+(2\alpha-\beta)u), &  ~\textnormal{~if~} m=1, \\
    (q+\beta u,~~q+\Sigma_\beta(u)+\Sigma_\alpha(u),~~(m+1)q+m\Sigma_\alpha(u)+\delta u), &  ~\textnormal{~otherwise},
\end{cases}
\]
where $q:=q^u_1$.

\end{corollary}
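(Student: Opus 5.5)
The corollary is a direct read-off from the proof of Proposition~\ref{prop:non-confluent2}. I would retrace the two cases of that proof, $\beta=m\alpha+\delta$ with $m=1$ or $m\geq 2$, and record the triple obtained in each. For each triple $(x,a,b)$ three things must be checked: $x\rightarrow^* a$, $x\rightarrow^* b$, and $a$ and $b$ are not joinable. By Definition~\ref{def:nonconfluenttriples} and Lemma~\ref{lemma:diamondlemma-like}, this makes it a non-confluent triple in $\mathcal{M}(E,w)$.

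\emph{Case $m=1$.} Take $x=q+\alpha u$.
\begin{itemize}
\item Applying $(\mathfrak{R}1)$ once gives $a=2q+\Sigma_\alpha(u)$.
\item Splitting $\alpha u=(\beta-\alpha)u+(2\alpha-\beta)u$ and applying $(\mathfrak{R}2)$ gives $b=\Sigma_\beta(u)+(2\alpha-\beta)u$.
\end{itemize}

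\emph{Case $m\geq 2$.} Take $x=q+\beta u$.
\begin{itemize}
\item Applying $(\mathfrak{R}2)$ and then $(\mathfrak{R}1)$ gives $a=q+\Sigma_\beta(u)+\Sigma_\alpha(u)$.
\item Applying $(\mathfrak{R}1)$ $m$ times gives $b=(m+1)q+m\Sigma_\alpha(u)+\delta u$.
\end{itemize}

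Throughout, the hypothesis $u\notin r(s^{-1}(u))$ is what guarantees that $c_u$ of each $\Sigma_\gamma(u)$ is zero. This is needed so that the $u$-coefficients of $a$ and $b$ are exactly as written.

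Next I would verify irreducibility of $a$ and $b$ using the criteria in the Remark following Definition~\ref{def:c_s,mu(s),support}.
\begin{itemize}
\item At vertices $s\neq u$, the $\mu$-bound hypothesis of Proposition~\ref{prop:non-confluent2} gives $c_s(\cdot)<\mu(s)$. The maximum in the hypothesis is designed to cover exactly the coefficients $c_s(\Sigma_\alpha)$, $c_s(\Sigma_\beta)$, $c_s(\Sigma_\alpha)+c_s(\Sigma_\beta)$ and $m\,c_s(\Sigma_\alpha)$ that occur.
\item At $u$, for $m=1$: $a$ has $c_u(a)=0$, and $b$ has $c_u(b)=2\alpha-\beta<\alpha$ with $q\notin\supp(b)$, so neither $(\mathfrak{R}1)$ nor $(\mathfrak{R}2)$ fires.
\item At $u$, for $m\geq 2$: $a$ has $c_u(a)=0$, and $b$ has $c_u(b)=\delta<\min\{\alpha,\beta-\alpha\}=\mu(u)$.
\end{itemize}

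The step needing care is that one vertex $s$ could lie in both supports, or could coincide with a range vertex receiving contributions from both $\Sigma_\alpha$ and $\Sigma_\beta$. The $\max$ and sum in the hypothesis are there precisely to handle this, so I would check it coefficientwise. One subtlety: at a vertex $s\neq u$ carrying a $q$-generator, a reduction $(\mathfrak{R}i(s))$ with $i\geq 2$ needs $q_{i-1}^s$ in the support. No such generator appears in $a$ or $b$, so only $(\mathfrak{R}1(s))$ matters, and it is excluded because $c_s<\mu(s)\leq w_1(s)$.

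Finally, both $a$ and $b$ are irreducible and distinct, since their $q$-coefficients differ: $2$ versus $0$ when $m=1$, and $1$ versus $m+1$ when $m\geq2$. Two distinct irreducible elements cannot reduce to a common element, so $a$ and $b$ are not joinable, and each listed triple is non-confluent. I expect no real obstacle here; the only point demanding attention is the coefficient bookkeeping at shared range vertices, which the hypotheses were tailored to control.
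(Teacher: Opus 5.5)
Your proposal is correct and follows the paper's own argument: the corollary is read off from the two cases ($m=1$ and $m\geq 2$) of the proof of Proposition~\ref{prop:non-confluent2}. It uses the same reductions to produce $a$ and $b$, checks irreducibility via $u\notin r(s^{-1}(u))$ and the $\mu$-bounds, and gets non-joinability from $a$ and $b$ being distinct irreducible elements, while your remarks that no $q^s$-generator with $s\neq u$ occurs and that $\mu(s)\leq w_1(s)$ only make explicit what the paper leaves implicit.
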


\begin{example}\label{ex:PropConditions_not_enough}
    Consider the weighted graph $(E,w)$ described in Proposition~\ref{prop:confluent_iff_beta-alpha_geq1} and take $\alpha=2$ and $\beta=4$. Then $\mathcal{M}(E,w)$ is non-confluent. However, one can verify that the additional condition of Proposition \ref{prop:non-confluent2} is not satisfied, showing that Proposition 
   \ref{prop:non-confluent2}'s hypothesis, while sufficient, does not give a complete characterization of non-confluence.

\end{example}

\begin{conclusion}
    
We characterized weighted graph monoid confluence on certain weighted graphs such as multi-weighted $n$-star graphs (Theorem~\ref{theo:nonclulent_n-stargraph}) and graphs described in Proposition~\ref{prop:confluent_iff_beta-alpha_geq1}. However, as we have seen, providing a full characterization of confluence in general weighted graph monoids appears to require progressively more elaborate hypotheses. Lemma~\ref{lemma:nonconfluent1-revision} provides a sufficient condition for non-confluence, but Proposition~\ref{prop:confluent_iff_beta-alpha_geq1} demonstrates that this condition is not necessary, and hence does not provide a complete characterization.
Addressing this, Proposition~\ref{prop:non-confluent2} establishes a sufficient condition for non-confluence under weaker structural assumptions at the expense of additional technical hypotheses on the weighted graph. Example~\ref{ex:confluent_case_1_not_satisfied_2} shows that failing one of these hypotheses can yield a confluent weighted graph monoid, while Example~\ref{ex:PropConditions_not_enough} shows that a weighted graph monoid may fail to satisfy these hypotheses and yet still be non-confluent.
\end{conclusion} 
Therefore, it is a good time to stop here and conclude this section with the following open problem for further investigation.

\begin{openproblem}
Let $(E, w)$ be an arbitrary row-finite weighted graph. Give necessary and sufficient conditions on $(E, w)$ for the weighted graph monoid $\mathcal{M}(E, w)$ to be confluent.
\end{openproblem}
\medskip

\section{Weighted Graph Monoid of (LPA) Weighted graphs}\label{sec:WLPAof(LPA)weightedgraphs}

Let \((E,w)\) be a weighted graph satisfying Condition \((\mathrm{LPA})\), which we shall call an \emph{(LPA) weighted graph}.
By Preusser's computational procedure \cite{Preusser2019WeightedLP}, one obtains an unweighted directed graph
\(F\) such that as $\K$-algebras, 
\[
L_\K(E,w)\cong L_\K(F).
\]
This is performed in two steps we shall call \emph{Step $\mathcal{P}1$} and \emph{Step $\mathcal{P}2$}:

\[
(E,w)\quad \xlongrightarrow{\textnormal{Step~} \mathcal{P}1} \quad (E',w')\quad \xlongrightarrow{\textnormal{Step~} \mathcal{P}2}\quad F_{E',w'}=F, \]
where $(E',w')$ is a new weighted graph obtained from $(E,w)$. In particular, $L_\K(E,w)\cong L_\K(E',w')\cong L_\K(F)$; see Lemma \ref{lem:P1P2isomorphic}. From $(E',w')$, we define an auxiliary unweighted graph $\widetilde{E'}$ which we shall utilize to characterize cancellativity of weighted graph monoids of (LPA) weighted graphs.

Recall that for $u,v\in E^0$, we write $u\geq v$ if there exists a path $p\in\textnormal{Path}(E)$ such that $s(p)=u$ and $r(p)=v$. For a vertex $v\in E^0$, the \emph{tree of $u$} is the set $T(u)=\{w\in E^0~|~u\geq w\}$. For $X\subseteq E^0$, the \emph{tree of $X$} is set $T(X)= \bigcup_{v\in X} T(v)$. Two edges $e,f\in E^1$ are said to be \emph{in line} if $e=f$, $r(e)\geq s(f)$ or $r(f)\geq s(e)$.  

\subsection{Preusser's Two-Step Construction from Weighted to Unweighted Graph}\label{subsec:2-Step_(LPA)Graphs}

For a weighted graph $(E,w)$, denote by $E_w=\{e\in E^1: w(e)>1\}$ and $E_{uw}=\{e\in E^1: w(e)=1\}$, are  the sets of weighted and  unweighted edges in $(E,w)$, respectively. 

In \cite{Preusser2019WeightedLP}, Preusser provided a set of conditions which characterizes when a weighted Leavitt path algebra is isomorphic to a Leavitt path algebra.

\begin{definition}[(LPA) weighted graphs] 
We say that a weighted graph $(E,w)$ satisfies \emph{Condition (LPA)} if the following holds true:
\begin{enumerate}[leftmargin=1.5cm]
    \item [(LPA1)]
    Any vertex $v\in E^0$ emits at most one weighted edge.
    \item [(LPA2)]
    Any vertex $v\in T(r(E_w))$ emits at most one edge.
    \item [(LPA3)] If two weighted edges $e,f\in E_w$ are not in line, then $T(r(e))\cap T(r(f))=\varnothing$.  
    
    \item [(LPA4)]
    If $e\in E_w$ and $C$ is a cycle based at some vertex $v\in T(r(e))$, then $e$ is an edge in $C$.
\end{enumerate}
If $(E,w)$ satisfies Condition (LPA), then we say $(E,w)$ is an \emph{(LPA) weighted graph}. 
\end{definition}

\begin{theorem}[{\cite[Theorems 1 and 2]{Preusser2019WeightedLP}}]\label{theo:WLPA=LPA_iff_LPAGraph} 
    Let $(E,w)$ be a row-finite weighted graph and $\K$ a field. Then $L_\K(E,w)$ is isomorphic to an unweighted Leavitt path algebra if and only if $(E,w)$ satisfies Condition (LPA).
\end{theorem}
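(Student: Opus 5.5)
The statement is an equivalence, and I would prove its two directions by quite different methods: an explicit construction for sufficiency, and a ring-theoretic obstruction for necessity.

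\emph{Sufficiency.} Assume $(E,w)$ satisfies Condition (LPA). I would construct an unweighted graph $F$ in two stages, matching Steps $\mathcal{P}1$ and $\mathcal{P}2$ announced above. For Step $\mathcal{P}1$, I would use (LPA2), (LPA3) and (LPA4) to put the weighted part of the graph into a normal form. By (LPA2), every vertex in $T(r(E_w))$ emits at most one edge, so for each weighted edge $e$ the tree $T(r(e))$ is a single path, possibly ending in a cycle. By (LPA4), such a cycle must pass through $e$ itself. By (LPA3), the trees below weighted edges that are not in line are disjoint. This should allow me to move weights along these paths without changing the algebra. The modified weighted graph $(E',w')$ would come with explicit mutually inverse $\K$-algebra homomorphisms $L_\K(E,w)\leftrightarrows L_\K(E',w')$, defined on generators and checked against (wCK1) and (wCK2).

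For Step $\mathcal{P}2$, I would use (LPA1): each vertex $v$ emits at most one weighted edge $e$, of weight $n$. The idea is to replace $e$ by $n$ unweighted edges $e_1,\dots,e_n$, and at the same time split each vertex on the unique path below $r(e)$ into $n$ copies, with the appropriate edges between the copies. I would then write down generators $P_v$, $e_i\mapsto(\text{paths})$ and check the relations in both directions. The check of (wCK1), namely $\sum_i e_i^*f_i=\delta_{ef}r(e)$, is where the single-edge structure below $r(e)$ is needed: there, (wCK2) forces $e_ie_j^*$ to behave like matrix units. Combining the two stages gives $L_\K(E,w)\cong L_\K(F)$.

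\emph{Necessity.} Assume one of (LPA1)--(LPA4) fails. I would try to find an algebraic property that every unweighted Leavitt path algebra has and that $L_\K(E,w)$ lacks. The route I would try first is the normal-form basis of $L_\K(E,w)$ (the Hazrat--Preusser basis of reduced words), used to exhibit a specific local configuration in each failure case:
\begin{itemize}
\item two weighted edges at one vertex, for (LPA1);
\item a vertex below a weighted edge emitting two edges, for (LPA2);
\item overlapping trees of two weighted edges that are not in line, for (LPA3);
\item a cycle below a weighted edge that avoids that edge, for (LPA4).
\end{itemize}
In each case I would show that the corner $vL_\K(E,w)v$ (or a suitable subalgebra) violates a property of Leavitt path algebras. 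Candidate properties are: every corner $vL_\K(E)v$ either has linear growth or contains a free subalgebra together with a specific idempotent structure; $L_\K(E)$ is semiprime and has local units given by sums of vertices; and the growth or Gelfand--Kirillov dimension dichotomy for Leavitt path algebras. For instance, a failure of (LPA1) should produce a domain-like corner resembling $L_\K(2,3)$ sitting inside $vL_\K(E,w)v$, and this cannot occur in the corner of an unweighted Leavitt path algebra at an idempotent.

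The main obstacle is this necessity direction. Hypothetical isomorphisms $L_\K(E,w)\cong L_\K(F)$ need not send vertices to vertices, so any invariant I use must be isomorphism-invariant and must not depend on graph data. I would first aim for invariants that are intrinsic: nonstable $K$-theory-type invariants via $\mathcal{V}(L_\K(E,w))\cong\mathcal{M}(E,w)$, since $\mathcal{M}(F)$ is always refinement and separative while the multi-weighted relations may break refinement, together with Gelfand--Kirillov dimension. I would fall back on the explicit basis computations only for the cases these invariants do not separate.
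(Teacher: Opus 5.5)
\textbf{Necessity is not proved.} This is Preusser's Theorem~2, which the paper only cites. What you give here is a list of candidate invariants, not an argument. For none of the four failure types do you exhibit a property that
\begin{itemize}
\item is invariant under arbitrary $\K$-algebra isomorphisms,
\item holds for every $L_\K(F)$, and
\item is shown to fail for $L_\K(E,w)$.
\end{itemize}
Some candidates cannot work as stated. Having local units given by finite sums of vertices is shared by all weighted Leavitt path algebras. Statements about vertex corners are not isomorphism invariants, as you yourself note. The $L_\K(2,3)$-like corner you predict for (LPA1) failures is unsupported. For $u$ emitting two weight-$2$ edges to sinks $a,b$, the visible obstruction is instead that $\mathcal M(E,w)=\langle u,a,b\mid 2u=a+b\rangle$ fails refinement.

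More importantly, your preferred intrinsic invariant, refinement or separativity of $\mathcal V$, cannot cover all cases. Take $u\xrightarrow{e,2}v$ together with a loop of weight $1$ at $v$. Here (LPA1)--(LPA3) hold and only (LPA4) fails. Yet $\mathcal M(E,w)=\langle u,v\mid 2u=v\rangle\cong\mathbb N$ with $[1]=[u]+[v]=3$. This is exactly $(\mathcal V,[1])$ for $M_3(\K)$ and for $M_3(\K[x,x^{-1}])$, both of which are unweighted Leavitt path algebras. So each failure type needs its own separating invariant plus an actual computation, and at least (LPA4) needs something beyond $\mathcal V$. None of this is carried out, so half of the equivalence remains open.

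\textbf{Sufficiency misses the key mechanism.} The two-step shape of your outline is right: explicit homomorphisms, checked against (wCK1) and (wCK2). However, the constructions you describe would not work, because they omit the reversal of orientation.

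Under (LPA1), (wCK2) at $s(e)$ gives $e_ie_j^*=\delta_{ij}s(e)$ for $2\le i,j\le w(e)$. Suppose $e$ is replaced by $n$ unweighted edges emitted by $s(e)$, with $e_i\mapsto e^{(i)}$. Then this relation fails in $L_\K(F)$, because (CK2) writes $s(e)$ as a sum of at least two nonzero orthogonal idempotents $hh^*$. Preusser instead sends $e_i\mapsto (e^{(i)})^*$ for $i\ge 2$, where $e^{(i)}$ is the only edge emitted by a new vertex.

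Accordingly, Step $\mathcal P1$ does not ``move weights''. That would change the algebra in general: for $u\xrightarrow{e,2}v\xrightarrow{f,1}x$ one has $L_\K(E,w)\cong M_5(\K)$, while putting the weight $2$ on $f$ instead gives $M_4(\K)$. Step $\mathcal P1$ replaces every edge emitted from $Z=T(r(E_w))$ by $w(e)$ reversed unweighted edges; this is where (LPA2)--(LPA4) are used. Afterwards every weighted edge ends in a sink.

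Step $\mathcal P2$ then only needs to split that sink $v$ into $v^{(1)},\dots,v^{(n)}$. The new edges are $g^{(1)}\colon s(g)\to v^{(1)}$ and, for $i\ge 2$, $g^{(i)}\colon v^{(i)}\to s(g)$. Your remark that (wCK2) makes the $e_ie_j^*$ behave like matrix units points in this direction, but the outline never turns it into the orientation reversal that the proof actually requires.
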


\begin{definition}[Step $\mathcal{P}1$] 
Let $(E,w)$ be an (LPA) weighted graph and set $Z:=T(r(E_w))$. Construct the weighted graph $(E',w')$ by replacing each edge $e\in s^{-1}(Z)$ by $w(e)$ copies of unweighted edges with reversed orientation. 

More formally, for an (LPA) weighted graph $(E,w)$, $(E',w')$ is the weighted graph defined as follows: $(E')^{0} = E^{0}$,
$(E')^{1} = (E')^{1}_{Z} \sqcup (E')^{1}_{Z^{c}}$ where
\[ (E')^{1}_{Z}
= \left\{
e^{(1)},\ldots,e^{(w(e))}
\;\middle|\;
e\in E^{1},\ s(e)\in Z
\right\},\qquad 
(E')^{1}_{Z^{c}}
= \left\{
e\mid e\in E^{1},\ s(e)\notin Z
\right\},\]
\[
\begin{aligned}
s'(e^{(i)}) &= r(e),\qquad
r'(e^{(i)}) = s(e),\qquad
w'(e^{(i)}) = 1
\quad\text{for any } e^{(i)}\in (E')^{1}_{Z},\\
s'(e) &= s(e),\qquad
r'(e) = r(e),\qquad
w'(e) = w(e)
\quad\text{for any } e\in (E')^{1}_{Z^{c}}.
\end{aligned}
\]
We call $(E',w')$ the 
\emph{$\mathcal{P}1$-weighted graph} of $(E,w)$.

\end{definition}

We shall use the following structural properties of the $\mathcal{P}1$-weighted
graph \((E',w')\), which we call a \emph{sink-separated weighted graph}.

\begin{definition} \label{def:sink-separatedweightedgraph} We say that a weighted graph $(E,w)$ is \emph{sink-separated}  if it satisfies the following:

\begin{enumerate}
    \item [(1)] Every weighted edge in \((E,w)\) has range a sink. 
\item [(2)] No two distinct weighted edges have the same source or the same range.
\end{enumerate} In this case, for every
\(v\in r(E_w)\), we denote the unique weighted edge with range $v$ by \(g_v\) .
\end{definition} 
\begin{remark}
    
Equivalently, $(E,w)$ is sink-separated if the following are satisfied: 
\begin{enumerate}
\item $r(E_w)\subseteq \textnormal{Sink}(E)$; 
    \item $r|_{E_w}: E_w\longrightarrow \textnormal{Sink}(E)$ is injective; and  
    \item $s|_{E_w}: E_w\longrightarrow \textnormal{Sink}(E)$ is injective.
\end{enumerate}

\end{remark}
The following lemma directly follows from  \cite[Lemma 9]{Preusser2019WeightedLP}. 
\begin{lemma}
    Let $(E,w)$ be an (LPA) weighted graph and $(E',w')$ its associated $\mathcal{P}1$-weighted graph. Then $(E',w')$ is sink-separated. 
\end{lemma}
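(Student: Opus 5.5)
The plan is to identify the weighted edges of $(E',w')$ explicitly and then check conditions (1) and (2) of Definition~\ref{def:sink-separatedweightedgraph} using (LPA1)--(LPA4) for $(E,w)$. Write $Z=T(r(E_w))$. Every edge of $(E')^1_Z$ has weight $1$. Hence the weighted edges of $(E',w')$ are exactly the edges $e\in E_w$ with $s(e)\notin Z$, and these keep their source, range and weight. Two facts will be used repeatedly:
\begin{itemize}
\item for any $e\in E_w$ we have $r(e)\in Z$ and $T(r(e))\subseteq Z$, since $Z$ is closed under $\geq$;
\item if $e\in E_w$ and $s(e)\notin Z$, then $s(e)\notin T(r(g))$ for every $g\in E_w$.
\end{itemize}

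For condition (2), distinct weighted edges of $(E',w')$ are distinct edges of $E_w$. So they cannot share a source by (LPA1). Suppose instead that $e\neq e'$ in $E_w$ have $s(e),s(e')\notin Z$ and $r(e)=r(e')$.
\begin{itemize}
\item If $e,e'$ are not in line, then $r(e)\in T(r(e))\cap T(r(e'))$, which contradicts (LPA3).
\item If they are in line, say $r(e)\geq s(e')$, then $s(e')\in T(r(e))\subseteq Z$, which is a contradiction. The case $r(e')\geq s(e)$ is symmetric.
\end{itemize}

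For condition (1), let $e\in E_w$ with $s(e)\notin Z$, and show that $v:=r(e)$ emits no edge in $E'$. An edge of $(E')^1_{Z^c}$ with source $v$ would have $s=v\notin Z$, which is impossible because $v\in Z$. So the only danger is an edge $f^{(i)}$ with $s'(f^{(i)})=r(f)=v$ and $s(f)\in Z$. Assume such an $f$ exists, and choose $g\in E_w$ with $s(f)\in T(r(g))$. Then $v=r(f)\in T(r(g))$, and I split into cases as in the proof of condition (2).
\begin{itemize}
\item If $e,g$ are not in line, then $v\in T(r(e))\cap T(r(g))$, which contradicts (LPA3).
\item If $r(g)\geq s(e)$, then $s(e)\in Z$, which is a contradiction.
\item In the remaining cases, $g=e$ or $r(e)\geq s(g)$. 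In both cases $T(r(g))\subseteq T(r(e))$, so $s(f)\in T(r(e))$. A path from $v$ to $s(f)$ followed by $f$ returns to $v$. This closed path contains a cycle $C$ based at $v\in T(r(e))$, obtained by taking a shortest return to $v$. By (LPA4), $e$ is an edge of $C$. Then $s(e)\in T(v)\subseteq Z$, which is a contradiction.
\end{itemize}
Hence $r(e)$ is a sink of $E'$, which proves condition (1).

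The delicate step is this last case, namely extracting a genuine cycle (with distinct base vertices) from the closed path so that (LPA4) applies. I would handle it by taking a minimal-length closed subpath through $v$, which is a cycle based at $v$. Condition (LPA2) is not needed. The cleanup only requires checking that $T(r(g))\subseteq T(r(e))$ when $r(e)\geq s(g)$, which is immediate.
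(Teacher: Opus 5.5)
Your proof is correct. It takes a different route from the paper only in the sense that the paper gives no argument of its own. It states that the lemma ``directly follows from'' Preusser's Lemma~9 in \cite{Preusser2019WeightedLP}, where $(E',w')$ is introduced together with the isomorphism $L_\K(E,w)\cong L_\K(E',w')$.

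Your argument is a self-contained combinatorial verification:
\begin{enumerate}
\item[(a)] The weighted edges of $(E',w')$ are exactly the $e\in E_w$ with $s(e)\notin Z$.
\item[(b)] Distinct sources come from (LPA1).
\item[(c)] Distinct ranges come from (LPA3) together with the in-line dichotomy.
\item[(d)] $r(e)$ is a sink of $E'$ by (LPA3) and (LPA4), via your case split on how $e$ and $g$ are in line.
\end{enumerate}
The citation buys brevity and ties the lemma to the place where the construction originates. Your version buys transparency. In particular, it shows that sink-separatedness is purely combinatorial and uses only (LPA1), (LPA3) and (LPA4); (LPA2) is needed only for the algebra isomorphism, not for this lemma.

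On the step you flagged: taking the ``first return'' to $v$ would not by itself give a cycle in general, since the closed path $v\to a\to b\to a\to v$ repeats $a$. You do not actually need the cycle to be based at $v$. Any cycle contained in the closed path through $v$ and $s(f)$ has all its vertices in $T(v)=T(r(e))$. So (LPA4) forces $e$ into it, which gives $s(e)\in T(r(e))\subseteq Z$, the desired contradiction. Alternatively, a shortest closed path of positive length based at $v$, taken among all paths of $E$, is a cycle based at $v$.
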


\begin{definition}[Step $\mathcal{P}2$] Suppose $(E,w)$ is a sink-separated weighted graph. Construct the unweighted graph $F_{E,w}=(F^0,F^1, s_F, r_F)$ as follows: 
For $v\in r(E_w)$,
\begin{enumerate}[leftmargin=1.8cm]
    \item [($\mathcal{P}2.1$)]  replace $v$ with $w(g^v)$ vertices $v^{(1)}, v^{(2)}, \dots, v^{(w(g^v))}$;
    \item [($\mathcal{P}2.2$)] replace $g^v$ with  $w(g^v)$ unweighted edges $(g^v)^{(1)}, (g^v)^{(2)}, \dots, (g^v)^{(w(g^v))}$ such that $s_F((g^v)^{(1)})=s(g^v)$ and $r_F((g^v)^{(1)})=v^{(1)}$,  for each $i\geq 2$, and $s_F((g^v)^{(i)})=v^{(i)}$ and $r_F((g^v)^{(i)})=s(g^v)$;
    \item [($\mathcal{P}2.3$)]  replace unweighted edge $e$ such that $r(e)=v$ with $w(g^v)$ unweighted edges $e^{(1)}, e^{(2)}, \dots, e^{(w(g^v))}$ such that $s_F(e^{(i)})=s(e)$ and $r_F(e^{(i)})=v^{(i)}$.  
\end{enumerate}

More formally, for sink-separated weighted graph $(E,w)$ where $E=(E^0, E^1,s,r)$, $F:=F_{E,w}$ is the unweighted graph defined as follows:\\
$F^{0} = M \sqcup N$ where 
\[
M=E^0\setminus r(E_w)\qquad \textnormal{and} \qquad N=\{v^{(1)}, v^{(2)}, \dots, v^{(w(g^v)}\}~|~v\in r(E_w)\},
\]
$F^1=A\sqcup B \sqcup C\sqcup D$ where
\[
A=\{e\mid e\in E_{uw},\ r(e)\notin r(E_{w})\},
\]
\[
B=\{e^{(1)},\ldots,e^{(w(g^{r(e)}))}
   \mid e\in E_{uw},\ r(e)\in r(E_{w})\},
\]
\[
C=\{e^{(1)}\mid e\in E_{w}\},
\]
\[
D=\{e^{(2)},\ldots,e^{(w(e))}\mid e\in E_{w}\},
\]
and the maps $s_F$ and $r_F$ given by
\[
\begin{aligned}
s_F(e)&=s(e), & r_F(e)&=r(e) && (e\in A),\\
s_F(e^{(i)})&=s(e), & r_F(e^{(i)})&=r(e)^{(i)}
&& (e^{(i)}\in B),\\
s_F(e^{(1)})&=s(e), & r_F(e^{(1)})&=r(e)^{(1)}
&& (e^{(1)}\in C),\\
s_F(e^{(i)})&=r(e)^{(i)}, & r_F(e^{(i)})&=s(e)
&& (e^{(i)}\in D).
\end{aligned}
\]
We call $F_{E,w}$ the 
\emph{$\mathcal{P}2$-graph} of $(E,w)$. For $v\in r(E_w)$ we call $v^{(i)}\in F^0$ a \emph{split vertex} and for $w\not \in r(E_w)$, the corresponding vertex $w\in F^0$ is called an \emph{unsplit vertex}. Moreover, we call $e^{(i)}\in B\cup C\cup D$ a \emph{split edge} and $e\in A$ an \emph{unsplit edge}. 

\end{definition}
If the context is clear, we sometimes write $s_F$ and $r_F$ simply as $s$ and $r$, respectively.

\begin{remark}\label{rem:StepP2-v1-sink}
    Let $(E,w)$ be a sink-separated weighted graph and $F_{E,w}$ its associated $\mathcal{P}2$-graph. Then for every $v\in r(E_w)$, the unique weighted edge $g^v$ is a sink. Notice that by Steps $(\mathcal{P}1.2)$ and $(\mathcal{P}1.3)$, it follows that $v^{(1)}$ is a sink in $F_{E,w}$. 
\end{remark}

\begin{lemma}[{\cite[Lemmas 9 and 11]{Preusser2019WeightedLP}}]\label{lem:P1P2isomorphic}
 Let $\K$ be a field, $(E,w)$ be a weighted graph, $(E',w')$ be its $\mathcal{P}1$-weighted graph and $F$ be the $\mathcal{P}2$-graph of $(E',w')$. Then as $\K$-algebras, 
 \[L_\K(E,w)\cong L_\K(E',w')\cong L_\K(F).\]    
\end{lemma}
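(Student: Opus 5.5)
The statement is a citation of \cite[Lemmas 9 and 11]{Preusser2019WeightedLP}, so the plan is to reconstruct the two isomorphisms separately and compose them. Throughout, $(E,w)$ is assumed to satisfy Condition (LPA), since Step $\mathcal{P}1$ is only defined for such graphs.

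\textbf{First isomorphism, $L_\K(E,w)\cong L_\K(E',w')$.} Put $Z=T(r(E_w))$. By (LPA2), every vertex $v\in Z$ emits at most one edge $e$. I would check what (wCK1) and (wCK2) say at such a vertex with $w(e)=n$.
\begin{itemize}
\item (wCK2) gives $e_ie_j^*=\delta_{ij}v$ for $1\le i,j\le n$.
\item (wCK1) gives $\sum_i e_i^*e_i=r(e)$.
\end{itemize}
With the roles of $e_i$ and $e_i^*$ swapped, these are exactly the Cuntz--Krieger relations (CK1) and (CK2) for $n$ unweighted edges $e^{(i)}$ running from $r(e)$ to $s(e)$. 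This suggests defining $\phi$ on generators by
\begin{itemize}
\item $v\mapsto v$ on vertices;
\item $e_i\mapsto (e^{(i)})^*$ and $e_i^*\mapsto e^{(i)}$ when $s(e)\in Z$;
\item the identity on all edges with source outside $Z$.
\end{itemize}
Next I would define an inverse $\psi$ on the generators of $L_\K(E',w')$ in the obvious way. Both maps respect the defining relations. Relations at vertices outside $Z$ are untouched. Relations at a vertex $r(e)$ that receives reversed edges $e^{(i)}$ are a new (CK2) in $E'$ at $r(e)$. Condition (LPA2) guarantees that $r(e)\in Z$ emits at most one edge of $E$, which is itself reversed. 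Hence in $E'$ the edges emitted by $r(e)$ are exactly the reversed copies of the edges entering it from $Z$, and the bookkeeping closes up. (LPA3) keeps the trees of distinct weighted edges disjoint, so no vertex receives conflicting contributions. The compositions $\phi\psi$ and $\psi\phi$ are the identity on generators.

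\textbf{Second isomorphism, $L_\K(E',w')\cong L_\K(F)$.} Here $(E',w')$ is sink-separated by the preceding lemma. Each weighted edge $g=g^v$ has weight $n$ and ends at a sink $v$. The plan is to split the idempotent $v$ into the orthogonal idempotents $g_i^*g_i$, which I expect correspond to the split vertices $v^{(i)}$.
\begin{itemize}
\item Send $v^{(1)}\mapsto g_1^*g_1$ and $(g^v)^{(1)}\mapsto g_1$-type elements.
\item Send the remaining pieces $v^{(i)}$, $(g^v)^{(i)}$ for $i\ge2$ to the matching $g_i$-terms, with orientation reversed as in ($\mathcal{P}2.2$).
\item Send each edge $e$ entering $v$ to the pieces $e\,g_i^*g_i$, matching ($\mathcal{P}2.3$).
\end{itemize}
I would first define the map $L_\K(F)\to L_\K(E',w')$ on generators and verify (CK1) and (CK2). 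Then I would construct the inverse on generators and verify (wCK1) and (wCK2).

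\textbf{Main obstacle.} I expect the hardest step to be the second isomorphism. The images of the split vertices must be orthogonal idempotents summing to $v$, and the relation $\sum_{e\in s^{-1}(u)}e_ie_j^*=\delta_{ij}u$ at $u=s(g)$ must hold, where $u$ also emits unweighted edges. I would first treat a single weighted edge $u\to v$ alongside unweighted edges, checking the formulas by hand. Only then would I globalize, using that distinct weighted edges have distinct sources and ranges. If the direct computation becomes unwieldy, I would fall back on quoting \cite[Lemma 11]{Preusser2019WeightedLP}, since the lemma is stated as a citation.
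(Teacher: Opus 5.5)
Your route is the one behind the citation. The paper gives no argument of its own, but in the proof of its graded theorem it recalls exactly your maps:
\begin{itemize}
\item $\phi_1(e_i)=(e^{(i)})^*$ for $s(e)\in Z$, and the identity otherwise;
\item $\phi_2$ with $g_1\mapsto g^{(1)}$, $g_i\mapsto (g^{(i)})^*$ for $i\geq 2$, and an unweighted edge into a split vertex sent to the sum of its copies.
\end{itemize}
You are also right that the (LPA) hypothesis, omitted in the printed statement, is needed. Your second step works as planned. The element $g_1^*g_1$ is idempotent: (wCK2) with $i=j=1$ gives $g_1g_1^*=s(g)-\sum_e ee^*$ over the unweighted $e\in s^{-1}(s(g))$, and (wCK1) for the pair $e\neq g$ gives $e^*g_1=0$. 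The cross terms in $\phi_2(e)\phi_2(e)^*$ vanish because the copies $e^{(j)}$ have pairwise distinct ranges.

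The gap is in the first step. For $\phi_1$ and its inverse to respect the relations of $E'$, you need every $x\in Z$ to be the range of \emph{at most one} edge of $E$ with source in $Z$. Otherwise, take two such edges $f\neq h$. By (wCK1) at $s(f)$ and $s(h)$, (CK2) at $x$ in $E'$ is sent to $\sum_i f_i^*f_i+\sum_j h_j^*h_j=2x$, not $x$. Your justification does not give this fact. (LPA2) only bounds out-degrees. (LPA3) does not make the trees of distinct weighted edges disjoint: it says nothing about in-line weighted edges, whose trees are nested.

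The missing ingredient is (LPA4), which you never use, and it cannot be avoided. Take $a\xrightarrow{g,2}b\to c\rightleftarrows d$. Here (LPA1)--(LPA3) hold, and $c$ receives two edges with source in $Z=\{b,c,d\}$. Moreover $(E',w')$ is sink-separated, so $L_\K(E',w')$ is an unweighted Leavitt path algebra, while $L_\K(E,w)$ is not by Preusser's characterization. So no argument avoiding (LPA4) can prove the first isomorphism.

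To repair the step, argue as follows.
\begin{itemize}
\item By (LPA2), $T(r(e))$ is traced by a single path $r(e)=x_0\to x_1\to\cdots$.
\item A first repeated vertex on this path gives a cycle based in $T(r(e))$. By (LPA4) this cycle contains $e$, hence $x_0$, so the path closes up at $x_0$ through $e$.
\item Thus the path is either simple or a cycle through $e$. In both cases no vertex has two in-edges with source in $T(r(e))$.
\item Now suppose $r(f)=r(h)$ with $s(f)\in T(r(e))$ and $s(h)\in T(r(e'))$. Then $T(r(e))\cap T(r(e'))\neq\varnothing$, so (LPA3) forces $e,e'$ to be in line.
\item Hence one tree contains the other, and you are back in the single-path case.
\end{itemize}
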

The following is a direct consequence of Lemma \ref{lem:P1P2isomorphic}.

\begin{proposition}
    Let $\K$ be a field, $(E,w)$ be a weighted graph, $(E',w')$ be its $\mathcal{P}1$-weighted graph and $F$ be the $\mathcal{P}2$-graph of $(E',w')$. Then as monoids,
    \[\mathcal{V}(L_\K(E,w))\cong \mathcal{V}(L_\K(E',w'))\cong \mathcal{V}(L_\K(F)).\]    
Hence, 
    \[\mathcal{M}(E,w)\cong \mathcal{M}(E',w')\cong M_F.\]
    
\end{proposition}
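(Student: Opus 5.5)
The plan is to combine Lemma~\ref{lem:P1P2isomorphic} with the functoriality of $\mathcal{V}$ and then apply Preusser's description of the $\mathcal{V}$-monoid of a weighted Leavitt path algebra to each of the three algebras.

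\emph{Step 1.} Recall that $\mathcal{V}(R)$ is the monoid of isomorphism classes of finitely generated projective (left) $R$-modules, with direct sum as the operation. An isomorphism of $\K$-algebras $\phi\colon R\to S$ gives an equivalence of module categories by restriction of scalars along $\phi^{-1}$; equivalently, one can use $P\mapsto S\otimes_{R}P$, where $S$ is viewed as an $R$-module via $\phi$. This equivalence preserves finite generation, projectivity and direct sums. It therefore induces a monoid isomorphism $\mathcal{V}(R)\cong\mathcal{V}(S)$. Applying this to the isomorphisms $L_\K(E,w)\cong L_\K(E',w')\cong L_\K(F)$ of Lemma~\ref{lem:P1P2isomorphic} yields the first chain of isomorphisms.

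\emph{Step 2.} For the second chain, apply \cite[Theorem~14]{PreusserVMonoid}, which gives $\mathcal{V}(L_\K(E,w))\cong\mathcal{M}(E,w)$ for any row-finite weighted graph. For this we must check the hypotheses on each graph.
\begin{itemize}
\item $(E,w)$ is row-finite by the standing assumption.
\item $(E',w')$ is row-finite: each vertex of $E'$ emits finitely many edges. The edges of $E'$ are either original edges $e$ with $s(e)\notin Z$, or reversed copies $e^{(i)}$ with $s(e)\in Z$. By (LPA2), every vertex of $Z$ emits at most one edge, and $w(e)<\infty$. So a vertex $v$ receives at most finitely many reversed edges from a single such source. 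The possible source of trouble is a vertex $r(e)$ receiving infinitely many edges from $Z$. If row-finiteness of $(E',w')$ is not immediate here, I would instead rely on the fact that Preusser's construction in \cite{Preusser2019WeightedLP} is carried out within the row-finite setting, so $(E',w')$ and $F$ are row-finite there.
\item $F$ is unweighted, so $\mathcal{M}(F,1)=M_F$ by Example~\ref{ex:weighted_graph_monoid}.
\end{itemize}
With these checks, $\mathcal{M}(E,w)\cong\mathcal{V}(L_\K(E,w))\cong\mathcal{V}(L_\K(E',w'))\cong\mathcal{V}(L_\K(F))\cong M_F$, and in the middle $\mathcal{V}(L_\K(E',w'))\cong\mathcal{M}(E',w')$.

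The main obstacle is the row-finiteness check in Step 2; everything else is formal. An alternative that avoids it is to use the Ara--Moreno--Pardo theorem $\mathcal{V}(L_\K(F))\cong M_F$ for the unweighted graph $F$. Then only $\mathcal{M}(E',w')$ needs Preusser's theorem applied to $(E',w')$, so row-finiteness is needed for $(E',w')$ alone. I would verify it from (LPA1)--(LPA4), or take it as given from the construction in \cite{Preusser2019WeightedLP}.
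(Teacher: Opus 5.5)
Your proposal is correct and follows the paper's route. The paper records the statement as a direct consequence of Lemma~\ref{lem:P1P2isomorphic}: algebra isomorphisms induce $\mathcal V$-monoid isomorphisms, and these are combined with Preusser's $\mathcal V(L_\K(E,w))\cong\mathcal M(E,w)$ and with $\mathcal V(L_\K(F))\cong M_F$.

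The paper leaves the row-finiteness of $(E',w')$ implicit, and it does hold. Besides its original edges when $v\notin Z$, a vertex $v$ of $E'$ emits $w(e)$ reversed copies of each edge $e$ with $r(e)=v$ and $s(e)\in Z$. By (LPA2), each $T(r(g))$ with $g\in E_w$ is traced by a single path. If $v$ lies on a cycle, (LPA4) confines the relevant $g$ to the finitely many edges of that cycle, so only finitely many such $e$ occur. If $v$ lies on no cycle, (LPA3) makes all $g$ with $v\in T(r(g))$ pairwise in line, so their paths enter $v$ through one common edge.
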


\begin{remark}\label{rem:cancellativitypreserved}
A commutative monoid $(M, +)$ is said to be \emph{cancellative} if $a+b=c+b$ implies $a=c$ which is preserved under monoid isomorphisms. Hence, $\mathcal{M}(E,w)$ is cancellative if and only if $\mathcal{M}(F)$ is cancellative.
\end{remark}

\subsection{Cancellativity-Detecting Auxiliary Graph}
\label{subsec:Cancellativity}

\begin{definition}\label{def:auxiliarygraph} For a weighted graph $(E,w)$, the \emph{auxiliary (directed) graph} of $(E,w)$ is $\widetilde{E}=(\widetilde{E}^0, \widetilde{E}^1, \widetilde{s}, \widetilde{r})$, where
\[
\widetilde E^0=E^0,\qquad 
\widetilde E^1
=
E_{uw}^1\sqcup \{\overline g\mid g\in E_w^1\},   
\]
and the maps $\tilde{s}, \tilde{r}: \widetilde{E}^1\longrightarrow \widetilde{E}^0$ defined by 
\[
\tilde{s}(e)=s(e),\qquad \tilde{s}(\overline{f})=r(f),  \qquad \tilde{r}(e)=r(e), \qquad \textnormal{and}\qquad  \tilde{r}(\overline{f})=s(f),\]
for every $e\in E^1_{uw}$ and $f\in E^1_w$. We call $\overline{f}\in \widetilde{E}^1$ the \emph{bar edge} associated with $f\in E^1_w$.
\end{definition}

\begin{figure}[H]
    \centering
    \begin{tikzpicture}[
    >={Stealth},
    every node/.style={font=\small}
]
    \begin{scope}
        \node (a0) at (-1.3,0.7) {$(E,w):$};
        
        \node (a1) at (0,0) {$a$};
        \node (y1) at (1.6,1.3) {$y$};
        \node (v1) at (2.2,0) {$v$};

        \draw[->] (a1) -- node[above left] {$e,1$} (y1);
        \draw[->] (a1) -- node[below] {$g,2$} (v1);
    \end{scope}

    \begin{scope}[xshift=7cm]
        \node (a0) at (-1.3,0.7) {$\widetilde{E}:$};
        
        \node (a2) at (0,0) {$a$};
        \node (y2) at (1.6,1.3) {$y$};
        \node (v2) at (2.2,0) {$v$};

        \draw[->] (a2) -- node[above left] {$e$} (y2);
        \draw[->] (v2) -- node[below] {$\overline{g}$} (a2);
    \end{scope}
\end{tikzpicture}
    \caption{Weighted graph $(E,w)$ and its associated auxiliary directed graph $\widetilde{E}$}
\label{fig:auxiliarygraph}
\end{figure}

\begin{proposition}\label{prop:cycle-exit}\label{prop:characterization_CE1_CE2}
Let $(E,w)$ be a sink-separated weighted graph, and let $\widetilde{E}$ and $F$ be its associated auxiliary graph and $\mathcal{P}2$-graph, respectively. Then $F$ has no cycle with an exit if and only if the following two
conditions hold:
\begin{enumerate}
    \item[\emph{(CE1)}] \(E\) has no cycle with an exit.
    \item[\emph{(CE2)}] No cycle in \(\widetilde E\) contains a bar edge.
\end{enumerate}
\end{proposition}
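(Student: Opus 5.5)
The plan is to compare cycles in $F=F_{E,w}$ with cycles in $E$ and in $\widetilde E$ through the natural ``projection'' $\pi$ from $F$ to $\widetilde E$. On vertices, $\pi$ is the identity on unsplit vertices and sends $v^{(i)}\mapsto v$. On edges, $\pi$ sends $e\in A$ to $e$, $e^{(i)}\in B$ to $e$, and $(g^v)^{(i)}\in D$ to the bar edge $\overline{g^v}$. Edges in $C$ are never sent anywhere relevant, because they end at $v^{(1)}$.

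Two structural facts will be recorded first, read off from the definition of Step $\mathcal{P}2$ and Remark~\ref{rem:StepP2-v1-sink}. First, each $v^{(1)}$ is a sink of $F$, so no cycle of $F$ passes through $v^{(1)}$ or uses an edge of $C$. Second, for $i\ge 2$ the vertex $v^{(i)}$ emits exactly one edge, namely $(g^v)^{(i)}$, with range $s(g^v)$. It follows that $\pi$ carries paths of $F$ avoiding $C$ to paths of $\widetilde E$. Also, since weighted edges of $E$ end at sinks, every cycle of $E$ consists of unweighted edges between vertices outside $r(E_w)$.

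For the direction ($\Leftarrow$), assume (CE1) and (CE2), and suppose $c$ is a cycle in $F$ with an exit. If $c$ passes through some split vertex $v^{(i)}$, then $i\ge 2$, so $c$ uses the edge $(g^v)^{(i)}$. The closed path $\pi(c)$ in $\widetilde E$ therefore contains $\overline{g^v}$. I will extract a cycle through $\overline{g^v}$ by taking a shortest path in $\pi(c)$ from $\tilde r(\overline{g^v})=s(g^v)$ back to $v$; such a path has distinct vertices. This cycle contradicts (CE2).

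Otherwise $c$ uses only unsplit vertices and edges of $A$, so $c$ is literally a cycle of $E$. Its exit $f$ in $F$ lies in $A$, $B$ or $C$, and $f$ gives, respectively, the exit $f$, the exit $e$ (when $f=e^{(i)}$), or the exit $g$ (when $f=g^{(1)}$) of $c$ in $E$. Note that the edge $e$ or $g$ obtained this way is not on $c$, because $c$ avoids split vertices. This contradicts (CE1).

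For the direction ($\Rightarrow$), I will argue by contraposition. Suppose first that $E$ has a cycle $c$ with an exit $f$. By the observation above, $c$ survives unchanged in $F$ using $A$-edges. An exit of $c$ in $F$ is then $f$ itself if $f\in A$, or else a copy $f^{(i)}$ if $f\in B$ or $f^{(1)}$ if $f\in C$.

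Suppose next that $\widetilde E$ has a cycle $\tilde c$ containing a bar edge $\overline{g}$, where $g=g^v$. Since $v$ is a sink of $E$, the edge of $\tilde c$ entering $v$ is an unweighted edge $e$ with $r(e)=v$. I will lift $\tilde c$ to $F$ by the following rules:
\begin{itemize}
\item each bar edge $\overline{g^{x}}$ of $\tilde c$ together with its preceding edge is replaced by $e^{(2)}$ followed by $(g^{x})^{(2)}$, which is legitimate since $w(g^x)\ge 2$;
\item every other edge whose range lies in $r(E_w)$ cannot occur, since such a vertex is a sink in $E$ and emits only the bar edge in $\widetilde E$;
\item edges between unsplit vertices are kept as they are.
\end{itemize}
Distinctness of vertices of $\tilde c$ gives distinctness of vertices in the lift, so the lift is a cycle in $F$ through $s(g)$. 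It has the exit $g^{(1)}$ into the sink $v^{(1)}$.

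The main obstacle I expect is the bookkeeping in these two lifting/projecting steps. On the projection side, one must extract a genuine cycle (with distinct base vertices) through the bar edge from the closed path $\pi(c)$. On the lifting side, one must check that the choice of index $2$ at each split vertex produces a cycle rather than merely a closed path. Both points rest on the facts that split vertices $v^{(i)}$ with $i\ge2$ have out-degree one and that $v^{(1)}$ is a sink.
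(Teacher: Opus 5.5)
Your proof is correct and follows essentially the same route as the paper. For ($\Rightarrow$) you lift a cycle of $E$ together with its exit, and you lift a cycle of $\widetilde E$ through a bar edge $\overline{g}$ to a cycle of $F$ with exit $g^{(1)}$. For ($\Leftarrow$) you project a cycle of $F$ through a split vertex to a cycle of $\widetilde E$ containing a bar edge, and otherwise read the cycle and its exit off directly in $E$. Your lifting step is more careful than the paper's: the paper treats the return path $p$ as a path in $E$, ignoring any other bar edges on the cycle, and lets the lifted path end at $v$ instead of $v^{(2)}$, whereas your rule of using index $2$ at every split vertex of $\tilde c$ handles both points.
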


\begin{proof}
    $(\Rightarrow)$ Suppose $F$ has no cycle with an exit. We shall first prove (CE1). To the contrary, suppose $E$ has a cycle $C$ with an exit $f$. Since every vertex in $r(E_w)$ is a sink in $E$, no vertex in $C$ lies in $r(E_w)$. Hence $C$ is also a cycle in $F$.

    \noindent Case 1: $f\in E_{uw}$ and $r(f)\not \in r(E_w)$. Then $f$ is unchanged in $F$ and hence an exit for $C$ in $F$.\\
    \noindent Case 2: $f\in E_{uw}$ and $r(f) \in r(E_w)$. Then by Step $(\mathcal{P}2.3)$, $f$ is replaced with edges $f^{(i)}$ with $s(f^{(i)})=s(f)$ in $F$. Hence, each $f^{(i)}$ is an exit for $C$ in $F$.\\
    \noindent Case 3: $f\in E_w$. Then by Step $(\mathcal{P}2.2)$, $f=g^v$ for some $v\in r(E_w)$ and there exists an edge $f^{(1)}\in F^1$ with $s(f^{(1)})=s(f)$.

In all cases, we obtain a contradiction. Thus, (CE1) holds.

    Now, we show (CE2). Suppose in contrary that $\widetilde{E}$ contains a cycle $C$ with a bar edge $\overline{g}$, $g\in E_w$. Permute $C$ to obtain a cycle $C'=\overline{g}p$  where $p\in \textnormal{Path}(E)$ with $s(p)=r(\overline{g}):=a$ and $r(p)=s(\overline{g}):=v$. Hence, in $E$, $s(g)=a$ and $r(g)=v$. Moreover, $g=g^v$ and by Step $(\mathcal{P}2.2)$, there exists a vertex $v^{(2)}\in F^0$ and an edge $g^{(2)}\in F^1$ with $s(g^{(2)})=v^{(2)}$ and $r(g^{(2)})=s(g)=a$. 
    
    Now for the path $p\in \textnormal{Path}(E)$, by Steps $(\mathcal{P}2.2)$ and $(\mathcal{P}2.3)$, in $F$, we obtain copies of edges in $p$ with the same source and the same range as their associated edges in $E$. Moreover, for all edge $e$ which is neither a weighted edge nor an unweighted edge with range in $r(E_w)$, $e$ is also an edge in $F$ with source and range unchanged as in $E$.  Hence, we obtain a path $p'\in \textnormal{Path}(F)$ with $s(p')=s(p)=a$ and $r(p')=r(p)=v$. Hence, in $F$, we obtain a cycle $g^{(2)}p'$ with an exit $g^{(1)}$, a contradiction. Hence, (CE2)  holds.

\noindent $(\Leftarrow)$ Suppose (CE1) and (CE2) hold. Suppose $F$ contains a cycle with an exit. We first show that no edge in $C$ arises from a weighted edge in $E$. Let $g\in E_w$, $s(g):=a$, $r(g)=v$ and $w(g)=n$. Since $(E,w)$ is sink-separated, $v$ is a sink. By Remark \ref{rem:StepP2-v1-sink}, $g^{(1)}$ is a sink in $F$, hence, $g^{(1)}$ is not an edge in $C$. Suppose $C$ contains some edge $g^{(i)}$, $2\leq i\leq n$. Then we obtain a cycle $C'$ in $\widetilde{E}$ containing $\overline{g}$, since edges $e\in E_{uw}$ are edges retained in $\widetilde{E}^1$, which will then correspond to edges with the appropriate respective source and range in $F$; and edges $h\in E_w$  will correspond to the edges $h^{(i)}\in F^1$ ($i>1$) and $\overline{h}\in \widetilde{E}^1$ which are both reversed in orientation. This contradicts (CE2). Hence no edge in $C$ arise from a weighted edge in $E$. Moreover, it follows that $C$ contains no split vertex $v^{(i)}$. Thus, $C$ is a cycle in $E$. By (CE1), $C$ has no exit in $E$. We show that this contradicts the assumption that $C$ has an exit in $F$.

Let $f\in F^1$ be an exit for $C$ in $F$. Then $s(f)$ is an unsplit vertex. 

\noindent Case 1: $f$ is an unsplit edge. Then $f$ is also an exit for $C$ in $E$.

\noindent Case 2: $f=e^{(1)}$ for some $e\in E_w$, which is not an edge in $C$ in $F$. Then $s(f)=s(e)$ and $e$ is an exit for $C$ in $E$.

\noindent Case 3: $f=e^{(j)}$ for some $e\in E_{uw}$ with $r(e)\in r(E_w)$. Then $e$ is an exit for $C$ in $E$.

In all cases, we obtain a contradiction to (CE1). Thus, $F$ has no cycle with an exit.\end{proof}

The following result is similarly proved as Proposition \ref{prop:characterization_CE1_CE2} and we leave the proof to the reader.

\begin{corollary}
    
\label{cor:cycle-exit-2}
Let $(E,w)$ be a sink-separated weighted graph, and let $\widetilde{E}$ and $F$ be its associated auxiliary graph and $\mathcal{P}2$-graph, respectively. Then $F$ is acyclic if and only if the two conditions hold:
\begin{enumerate}
    \item [\emph{(AC)}] $E$ is acyclic.
    \item [\emph{(CE2)}] No cycle in \(\widetilde E\) contains a bar edge.
\end{enumerate}\end{corollary}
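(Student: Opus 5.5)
We argue exactly as in Proposition~\ref{prop:characterization_CE1_CE2}, replacing ``cycle with an exit'' by ``cycle''. The key fact is a correspondence between cycles in $F$ and cycles in $E$ or in $\widetilde E$. A cycle of $F$ that avoids every edge $g^{(i)}$ with $g\in E_w$ projects to a cycle of $E$. A cycle of $F$ that uses some $g^{(i)}$ with $i\geq 2$ projects to a cycle of $\widetilde E$ through the bar edge $\overline g$.

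For $(\Rightarrow)$, assume $F$ is acyclic. If $E$ had a cycle $C$, then no vertex of $C$ lies in $r(E_w)$, since these vertices are sinks by sink-separation. Hence every edge of $C$ is an unweighted edge whose range is not in $r(E_w)$, so it lies in $A$. Thus $C$ is literally a cycle in $F$, a contradiction, and (AC) holds. Now suppose $\widetilde E$ had a cycle containing a bar edge $\overline g$, with $s(g)=a$ and $r(g)=v$. Rotate it to $\overline g\,p$ with $p$ a path of $\widetilde E$ from $a$ to $v$. We then lift $p$ to a path $p'$ of $F$ from $a$ to $v^{(2)}$, replacing edges as follows:
\begin{itemize}
\item an unweighted edge $e$ with $r(e)\notin r(E_w)$ stays $e$;
\item an unweighted edge $e$ with range some $v'\in r(E_w)$ becomes $e^{(2)}$;
\item a bar edge $\overline h$ becomes $h^{(2)}$.
\end{itemize}
Then $p'$ followed by $g^{(2)}$ is a closed path in $F$, and so $F$ contains a cycle, a contradiction. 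Here $w(g)\geq 2$, so $g^{(2)}$ exists.

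The delicate point in this lift is that a split vertex $v'^{(2)}$ is only reached by the edges $e^{(2)}$ and only exits via $h^{(2)}$. So the lift is consistent exactly when, inside $p$, every entry into a vertex of $r(E_w)$ is followed by leaving it along the bar edge $\overline{g^{v'}}$. This holds automatically, because vertices in $r(E_w)$ are sinks in $E$, so their only outgoing edge in $\widetilde E$ is the unique bar edge. Also $v$ itself is entered at the end of $p$ via some edge $e$ with $r(e)=v$, and that edge lifts to $e^{(2)}$. The one exception is if $p$ ends with a bar edge, which cannot happen since bar edges end at the source of a weighted edge, not at a sink.

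For $(\Leftarrow)$, assume (AC) and (CE2) and let $C$ be a cycle in $F$. By Remark~\ref{rem:StepP2-v1-sink}, $v^{(1)}$ is a sink, so $C$ contains no edge of $C$-type. If $C$ uses some $g^{(i)}$ with $i\geq 2$, project $C$ back to $\widetilde E$: send $e$ or $e^{(j)}$ to $e$, send $h^{(j)}$ to $\overline h$, and send each split vertex $v^{(j)}$ to $v$. This gives a closed path through $\overline g$. Since every vertex has at most one bar edge entering or leaving it, the closed path can be shortened to a cycle that still contains $\overline g$, which contradicts (CE2). Otherwise $C$ uses only edges from $A\cup B$. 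An edge of $B$ ends at a split vertex $v^{(j)}$ with $j\geq 2$, whose only outgoing edge lies in $D$, so $C$ cannot use $B$ either. Hence $C$ consists of edges of $A$ and is a cycle in $E$, contradicting (AC).

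The main obstacle is the extraction step in $(\Leftarrow)$: showing that the projected closed walk contains an honest cycle through $\overline g$. The plan is to take a minimal closed subwalk containing $\overline g$. This works because the projection of $C$ repeats a vertex only where $C$ itself repeats a vertex, which it does not, apart from distinct split copies $v^{(j)}$ of the same $v$. Those copies each lead via $h^{(j)}$ to the same vertex $s(h)$, and $s(h)$ would then repeat in $C$. So the projection is already a cycle.
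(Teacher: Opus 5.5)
Your proof is correct and follows the route the paper intends, which is the argument of Proposition~\ref{prop:characterization_CE1_CE2} with ``cycle with an exit'' replaced by ``cycle'': a cycle $\overline{g}p$ in $\widetilde E$ lifts to the closed path $p'g^{(2)}$ in $F$, and a cycle in $F$ projects either to a cycle of $E$ or to a cycle of $\widetilde E$ through a bar edge. Your handling of bar edges inside $p$ (lifting them to the index-$2$ copies $h^{(2)}$), of the endpoint $v^{(2)}$, and your check that the projected closed walk has distinct vertices are more careful than the paper's own sketch, which treats $p$ as a path in $E$.
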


\begin{lemma}[{\cite[Lemma 5.5]{AraHazratLiSims2018}}] \label{lem:cancellativeM_E} Let $E$ be an arbitrary graph. The monoid $M_E$ is cancellative if and only if no cycle in $E$ has an exit. In particular, if $E$ is acyclic, then $M_E$ is cancellative. 
\end{lemma}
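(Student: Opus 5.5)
Since the statement is quoted from \cite[Lemma~5.5]{AraHazratLiSims2018}, the plan below reconstructs the argument in the language of this paper. Throughout, $\rightarrow$ denotes the reduction $v\rightarrow\sum_{e\in s^{-1}(v)}r(e)$ on the free commutative monoid $F_{E^0}$, i.e.\ the unweighted case of (\ref{reductions}), and $\sim$ the congruence it generates, so that $M_E=F_{E^0}/\sim$. In this notation I would prove the two implications separately, after first reducing to the row-finite case.

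\emph{Reduction to row-finite graphs.} If $E$ has infinite emitters, I would pass to a Drinen--Tomforde desingularisation $E_{\mathrm{ds}}$ \cite{Drinen_Tomforde:2005TheC*algebrasofarbitrarygraphs}. This is row-finite and $L_\K(E)$ is Morita equivalent to $L_\K(E_{\mathrm{ds}})$, so $M_E\cong\mathcal V(L_\K(E))\cong\mathcal V(L_\K(E_{\mathrm{ds}}))\cong M_{E_{\mathrm{ds}}}$. I would then check that $E$ has a cycle with an exit if and only if $E_{\mathrm{ds}}$ does: an infinite emitter on a cycle automatically supplies an exit, and the added tails are acyclic. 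From then on I assume $E$ is row-finite and use the confluence of $(F_{E^0},\rightarrow)$, which is Lemma~\ref{lem:confluence_vertex-weighted_graphs} with all weights equal to $1$.

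\emph{A cycle with an exit forces non-cancellativity.} Let $C=e_1\cdots e_n$ be a cycle based at $v$ with an exit $f$, where $s(f)=s(e_k)$. I would apply the relation at $s(e_1)$, then at $s(e_2)=r(e_1)$, and so on around $C$. Each step keeps the summand coming from the next edge of $C$, and the step at $s(e_k)$ also produces $r(f)$. After $n$ steps this gives $v\sim v+y$ with $r(f)\in\supp(y)$. If $M_E$ were cancellative, $v+y=v+0$ would give $y=0$ in $M_E$. But a nonzero element of $F_{E^0}$ only reduces to nonzero elements, so by confluence (Lemma~\ref{lemma:diamondlemma-like}) $y\not\sim 0$, a contradiction.

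\emph{No cycle with an exit implies cancellativity.} It suffices to cancel a single vertex: if $a+v\sim c+v$ then $a\sim c$. I would use the splitting property of the reduction: whenever $x+x'\rightarrow^*d$, one can write $d=d_1+d_2$ with $x\rightarrow^*d_1$ and $x'\rightarrow^*d_2$. I would also use a structural description of the vertices when no cycle has an exit. If $v$ lies on a cycle, that cycle has no exit, so each of its vertices emits exactly one edge and every descendant of $v$ is a single cycle vertex, all of which are equal to $v$ in $M_E$. If $v$ lies on no cycle, then its descendants in $T(v)$ never produce $v$ again.

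The intended argument takes a common descendant $d$ of $a+v$ and $c+v$, splits $d=a_1+v_1=c_2+v_2$ with $v\rightarrow^*v_1$ and $v\rightarrow^*v_2$, and then shows that $v_1,v_2$ can be pushed to a common descendant $v_3$ in a way that can be matched inside $d$. I expect this matching to be the main obstacle: the two copies of $v$ may fire along different branches of $T(v)$, and one has to rule out that the remaining terms coming from $a$ and $c$ can make up the difference. My first attempt would be an induction on the height of $v$ in the acyclic part of $E$, treating exitless cycles as terminal. Here a vertex on an exitless cycle behaves like a free generator, because its only descendants are cycle vertices equal to it. The inductive step would replace $v$ by $\sum_{e\in s^{-1}(v)}r(e)$ on both sides and then cancel the resulting vertices one at a time, each of which lies strictly lower. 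Because the relation is only of the form $v\sim\sum r(e)$ and no vertex off a cycle regenerates itself, I expect the induction to go through, though the bookkeeping for vertices with infinite paths but no cycles will need care. One way to handle it is to work on finite subgraphs $T(\supp(a+b+c))$, where such paths cannot occur.
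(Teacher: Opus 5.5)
The paper gives no proof of this lemma. It imports it from \cite[Lemma~5.5]{AraHazratLiSims2018}, and its introduction only remarks that one passes to a row-finite graph by desingularisation, so your argument has to stand on its own.

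Your desingularisation step is correct. So is your proof that a cycle with an exit yields $v\sim v+y$ with $y\not\sim 0$. In the converse, the height induction is the right idea, and it makes your ``matching'' worry unnecessary. After substituting $v\sim\sum_{e\in s^{-1}(v)}r(e)$ on both sides, you apply the induction hypothesis to one range vertex at a time, so no branches of $T(v)$ ever need to be compared. The base cases follow from confluence plus splitting:
\begin{itemize}
\item if $v$ is a sink, every descendant of $a+v$ has the form $a'+v$ with $a\rightarrow^*a'$;
\item if $v$ lies on an exitless cycle with vertices $u_0,\dots,u_{n-1}$, a common descendant gives $a'+u_i=c'+u_j$ in $F_{E^0}$ with $a\rightarrow^*a'$ and $c\rightarrow^*c'$, and since $u_i\sim u_j$ this yields $a'\sim c'$.
\end{itemize}

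The genuine gap is that the height need not be well-founded. In an infinite row-finite graph, a vertex lying on no cycle can have infinite height. Your proposed remedy fails, because the tree of a finite set need not be finite and can contain infinite paths without cycles. In your setting this cannot be avoided: the desingularisation you perform first attaches an infinite tail to every sink and every infinite emitter. For example, if $E$ is a single vertex, then $E_{\mathrm{ds}}$ is an infinite line with no sink and no cycle, so the induction has no base case.

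What is finite is the witness, not the tree. By confluence, choose finite chains $a+v\rightarrow^*d$ and $c+v\rightarrow^*d$. Let $H$ consist of the vertices fired in these chains, all edges they emit in $E$, the ranges of those edges, and $\supp(a+c+v)$. Then:
\begin{itemize}
\item $H$ is finite;
\item its non-sinks are exactly the fired vertices, each emitting in $H$ all of its edges from $E$, so $M_H\to M_E$ is a well-defined homomorphism and $a+v\sim c+v$ already holds in $M_H$;
\item a cycle of $H$ is a cycle of $E$, and an exit of it in $H$ is an exit in $E$, so $H$ has no cycle with an exit.
\end{itemize}
Your height induction, run inside $H$ (where unfired vertices are sinks), then gives $a\sim c$ in $M_H$, hence in $M_E$.
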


Now, we are ready to characterize cancellativity of weighted graph monoids of (LPA) graphs.
\begin{theorem}\label{theo:M(E,w)CancellativeCharacterization}
        Let $(E,w)$ be an (LPA) weighted graph, $(E',w')$ its $\mathcal{P}2$-weighted graph, and $\widetilde{E'}$ the associated auxiliary graph of $(E',w')$. Then $\mathcal{M}(E,w)$ is cancellative if and only if 
    \begin{enumerate}
    \item[\emph{(CE1)}] \(E'\) has no cycle with an exit.
    \item[\emph{(CE2)}] No cycle in \(\widetilde {E'}\) contains a bar edge.
\end{enumerate}
\end{theorem}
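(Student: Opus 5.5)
The proof is a chain of the results already established, applied to $F$, the $\mathcal{P}2$-graph of $(E',w')$. Note that the statement calls $(E',w')$ the ``$\mathcal{P}2$-weighted graph''; this must mean the $\mathcal{P}1$-weighted graph, i.e.\ the intermediate graph produced by Step $\mathcal{P}1$. I read it that way throughout.

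First, since $(E,w)$ satisfies Condition (LPA), Lemma \ref{lem:P1P2isomorphic} and the proposition following it give
\[
\mathcal{M}(E,w)\cong \mathcal{M}(E',w')\cong M_F .
\]
By Remark \ref{rem:cancellativitypreserved}, cancellativity is invariant under monoid isomorphism. Hence $\mathcal{M}(E,w)$ is cancellative if and only if $M_F$ is cancellative.

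Second, $F$ is an ordinary directed graph, so Lemma \ref{lem:cancellativeM_E} applies: $M_F$ is cancellative if and only if no cycle in $F$ has an exit. Row-finiteness is not even needed here, since that lemma holds for arbitrary graphs.

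Third, I would translate the condition on $F$ back to $(E',w')$. By the lemma derived from \cite[Lemma 9]{Preusser2019WeightedLP}, $(E',w')$ is sink-separated. Proposition \ref{prop:characterization_CE1_CE2} therefore applies to $(E',w')$, its auxiliary graph $\widetilde{E'}$, and its $\mathcal{P}2$-graph $F$. It says that $F$ has no cycle with an exit if and only if (CE1) and (CE2) hold for $E'$ and $\widetilde{E'}$. Chaining these three equivalences gives the theorem.

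The only real point to check is that the objects match up: the graph $F$ appearing in the monoid isomorphism must be exactly the $\mathcal{P}2$-graph of the sink-separated graph $(E',w')$ to which Proposition \ref{prop:characterization_CE1_CE2} is applied. This holds by the way $F$ is constructed. Beyond this there is no genuine obstacle, since the combinatorial work was already done in Proposition \ref{prop:characterization_CE1_CE2}.
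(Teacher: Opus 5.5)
Your proposal is correct and is essentially the paper's own proof. The paper likewise obtains the theorem by chaining $\mathcal{M}(E,w)\cong M_F$, the invariance of cancellativity under isomorphism (Remark~\ref{rem:cancellativitypreserved}), the cycle-with-exit criterion of Lemma~\ref{lem:cancellativeM_E}, and Proposition~\ref{prop:characterization_CE1_CE2} applied to the sink-separated graph $(E',w')$. You are also right that ``$\mathcal{P}2$-weighted graph'' in the statement must be read as the $\mathcal{P}1$-weighted graph.
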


\begin{proof}
    Directly follows from Proposition \ref{prop:characterization_CE1_CE2}, Lemma \ref{lem:cancellativeM_E} and Remark \ref{rem:cancellativitypreserved}. 
\end{proof}

\begin{corollary}\label{cor:acyclic(LPA)_Cancellative}
    Let $(E,w)$ be an acyclic (LPA) weighted graph, $(E',w')$ its $\mathcal{P}2$-weighted graph, and $\widetilde{E'}$ the associated auxiliary graph of $(E',w')$. Then $\mathcal{M}(E,w)$ is cancellative if and only if no cycle in $\widetilde{E'}$ contains a bar edge.
\end{corollary}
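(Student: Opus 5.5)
We will deduce the corollary from Theorem~\ref{theo:M(E,w)CancellativeCharacterization}. That theorem says $\mathcal{M}(E,w)$ is cancellative if and only if (CE1) and (CE2) hold for $(E',w')$, where $(E',w')$ is the weighted graph produced by Step~$\mathcal{P}1$, which is the graph the statement refers to. The condition (CE2) is exactly the condition in the corollary. So the whole task is to show that, when $E$ is acyclic, condition (CE1) holds automatically. To do this I will prove the stronger claim that $E'$ itself is acyclic; then $E'$ has no cycles at all, and in particular none with an exit.

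Put $Z:=T(r(E_w))$. The key structural observation is that $Z$ is hereditary in $E$: if $z\in Z$ and $z\geq y$, then $y\in Z$. The edges of $E'$ fall into two kinds.
\begin{enumerate}
\item The copies $e^{(i)}$ of edges $e$ with $s(e)\in Z$. For these, $s'(e^{(i)})=r(e)$ and $r'(e^{(i)})=s(e)$. Since $s(e)\in Z$ and $Z$ is hereditary, $r(e)\in Z$ as well, so both endpoints of $e^{(i)}$ lie in $Z$.
\item The edges $e$ with $s(e)\notin Z$, which keep their original source and range.
\end{enumerate}
Consequently no edge of $E'$ has its source in $Z$ and its range outside $Z$. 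A path in $E'$ that enters $Z$ therefore never leaves it again.

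Now suppose $C$ is a cycle in $E'$. Because $C$ returns to its starting vertex, it cannot enter $Z$ from outside and then come back out. Hence either all vertices of $C$ lie in $Z$ or none do.
\begin{enumerate}
\item If no vertex of $C$ lies in $Z$, every edge of $C$ is of the second kind. Then $C$ is a cycle in $E$, which contradicts the acyclicity of $E$.
\item If every vertex of $C$ lies in $Z$, then every edge of $C$ has source in $Z$, so it cannot be of the second kind. Thus every edge of $C$ is a reversed copy $e^{(i)}$. Reading $C$ backwards and replacing each $e^{(i)}$ by $e$ gives a closed path in $E$. After shortening it to a simple cycle, this again contradicts acyclicity of $E$.
\end{enumerate}
So $E'$ is acyclic and (CE1) holds vacuously. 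Combined with Theorem~\ref{theo:M(E,w)CancellativeCharacterization}, this proves both directions of the corollary. One can alternatively finish through Corollary~\ref{cor:cycle-exit-2} together with Lemma~\ref{lem:cancellativeM_E}.

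The only step needing care is the "never leaves $Z$" argument. It must use that the source of a reversed edge is $r(e)$, which lies in $Z$ by heredity, and that an edge of the second kind with source in $Z$ cannot exist. Condition (LPA2) is not even needed for this step.
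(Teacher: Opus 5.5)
Your argument is correct and follows the paper's route: the corollary is stated as an immediate consequence of Theorem~\ref{theo:M(E,w)CancellativeCharacterization} with (CE1) holding vacuously, and your heredity argument for $Z=T(r(E_w))$ supplies the step the paper leaves unstated, namely that acyclicity of $E$ passes to the $\mathcal{P}1$-graph $E'$ (you are also right to read ``$\mathcal{P}2$-weighted graph'' in the statement as the $\mathcal{P}1$-weighted graph). One small caveat applies only to your closing aside: Corollary~\ref{cor:cycle-exit-2} together with Lemma~\ref{lem:cancellativeM_E} gives just the ``if'' direction on its own, because for ``only if'' you still need that every cycle of $F$ (which, with $E'$ acyclic, must pass through some $g^{(i)}$ with $i\geq 2$) has the exit $g^{(1)}$, i.e.\ the argument of Proposition~\ref{prop:characterization_CE1_CE2}.
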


\begin{example}
   Consider the weighted graph $(E,w)$ and its auxiliary graph $\widetilde{E}$ in Figure \ref{fig:auxiliarygraph}.
As $Z:=T\bigl(r(E_w)\bigr)=\{v\}$ consists of a sink $v$, we have
$s^{-1}(Z)=\varnothing$ and hence we have the $\mathcal{P}1$-weighted graph $(E',w')=(E,w)$, with
$g_v=g$. Moreover, we have the auxiliary graph $\widetilde{E}=\widetilde{E'}$. The weighted graph $(E',w')$ is acyclic and $\widetilde{E}=\widetilde{E'}$ is also acyclic. By Corollary \ref{cor:acyclic(LPA)_Cancellative}, $\mathcal{M}(E,w)$ is cancellative. 

More precisely, the $\mathcal{P}2$-graph of $(E',w')$ is: 
\begin{center}
    \begin{tikzpicture}[>=stealth]
    \node (F) at (-1.6, -0.3) {$F:$};
    \node (v2) at (0,0) {$v^{(2)}$};
    \node (a) at (2.2,0) {$a$};
    \node (v1) at (4.4,0) {$v^{(1)}$};
    \node (y) at (2.2,-1.4) {$y$};

    \draw[->] (v2) -- node[above] {$g^{(2)}$} (a);
    \draw[->] (a) -- node[above] {$g^{(1)}$} (v1);
    \draw[->] (a) -- node[right] {$e$} (y);
\end{tikzpicture}
\end{center}
Hence
$$
M_F=\bigl\langle a,y,v^{(1)},v^{(2)} \ \bigm|\
a=y+v^{(1)},\ v^{(2)}=a \bigr\rangle;
$$
see Example \ref{ex:weighted_graph_monoid}. Thus, $
M_F\cong\mathbb N^2$, 
which is cancellative.

Calculating $\mathcal{M}(E,w)$, since $k_a=2$, $\mathcal{M}(E,w)$ is generated by the set $\{a,y,v,q_1^a\}$ subject to the relations
$$
(\mathfrak{R}1(a))~~q^a_0+\bigl(w_1(a)-w_0(a)\bigr)a=q^a_1+y
\qquad \text{and}\qquad (\mathfrak{R}2(a))~~
q^a_1+\bigl(w_2(a)-w_1(a)\bigr)a=q^a_2+v ,
$$
with $q^a_0=q^a_2=0$, hence reduced to 
$$
(\mathfrak{R}1(a))~~a=q^a_1+y\qquad \text{and}\qquad (\mathfrak{R}2(a))~~ q^a_1+a=v .
$$
Accordingly, 
$$
\mathcal{M}_{(E,w)}=\bigl\langle a,y,v,q^a_1 \ \bigm|\
a=q^a_1+y,\ q^a_1+a=v \bigr\rangle \cong \mathbb{N}^2\cong M_F.
$$

\end{example}

\begin{example}\label{ex:Cancellative_example_2}
    Consider the weighted graphs $(E,w_1)$ and $(E,w_2)$:

    \begin{center}
      \begin{tikzpicture}[>=stealth, scale=1, every node/.style={scale=1}]
    \node (u1) at (0,0) {$u$};
    \node (v1) at (2.2,0) {$v$};
    \node at (-1.2,0) {$(E,w_1):$};

    \draw[->, shorten <=1pt, shorten >=1pt]
        (u1) to[bend left=35]
        node[above] {$e,2$} (v1);
    \draw[->, shorten <=1pt, shorten >=1pt]
        (u1) to[bend right=35]
        node[below] {$f,1$} (v1);

    \node (u2) at (6.8,0) {$u$};
    \node (v2) at (9,0) {$v$};
    \node at (5.6,0) {$(E,w_2):$};

    \draw[->, shorten <=1pt, shorten >=1pt]
        (u2) to[bend left=35]
        node[above] {$e,2$} (v2);
    \draw[->, shorten <=1pt, shorten >=1pt]
        (u2) to[bend right=35]
        node[below] {$f,2$} (v2);

\end{tikzpicture}
\end{center}

Notice that $(E,w_1)$ is an (LPA) graph and $(E,w_2)$ is not. By Theorem \ref{theo:WLPA=LPA_iff_LPAGraph}, it follows that $L_\K(E,w_1)\not \cong L_\K(E,w_2)$. Moreover, we also have $(E',w_1')=(E,w_1)$ and the auxiliary graph $\widetilde{E'}=\widetilde{E}$ contains a cycle $C=\overline{e}f$. By Theorem~\ref{theo:M(E,w)CancellativeCharacterization}, it follows that $\mathcal{M}(E,w)$ is not cancellative while by \cite[Theorem 2.6]{dhn2026}, $\mathcal{M}(E,w')$ is cancellative. 
\end{example}

\medskip

\subsection{Graded Extension of Preusser's Two-Step Construction} \label{subsec:Vgr-onoid}

For a weighted graph $(E,w)$,  denote by $\mathcal V^{\mathrm{gr}}(L_\K(E,w))$ the commutative monoid of isomorphism classes of finitely generated graded projective unital right $L_\K(E,w)$-modules. 

The preceding results show that, within Condition~(LPA), the study of the $\mathcal V$-monoid of a weighted Leavitt path algebra can be transferred to an ordinary Leavitt path algebra through Preusser's two-step construction \cite{Preusser2019WeightedLP}. It is natural to ask whether this construction can be utilized in graded setting, that is, for the case of the weighted Leavitt path algebra's $\mathcal{V}^{\operatorname{gr}}$-monoid.

We briefly consider this question for the \emph{standard grading} for weighted Leavitt path algebras. For a weighted graph $(E,w)$, let 
\[
\lambda(E,w)=\sup\{w(v):v\in E^0\}
\]
when the supremum is finite, and set $\lambda(E,w)=\omega$ otherwise, where $\omega$ is the smallest infinite ordinal. Let $(E,w)$ be a weighted graph with $\lambda(E,w)<\infty$. Then  $L_\K(E,w)$ admits the \emph{standard $\mathbb Z^{\lambda(E,w)}$-grading} given by  $$ \deg(v)=0,\qquad \deg(e_i)=\epsilon_i,\qquad \deg(e_i^*)=-\epsilon_i, $$ 
for every $v\in E^0$, $e\in E^1$, and $1\leq i\leq w(e)$, where $\epsilon_i$ denotes the $i$-th standard basis vector of $\mathbb Z^{\lambda(E,w)}$. Equivalently, this is the grading induced by the admissible weight map $W_s$ given by $W_s(e_i)=\epsilon_i$; see \cite{preusser2021weighted} for more details.

For a vertex-weighted graph $(E,w)$, Damalerio, Hazrat and Nam in \cite{dhn2026} defined the so-called \emph{talented monoid} $T_{(E,w)}$ and showed that this monoid retrieves the $\mathcal V^{\mathrm{gr}}(L_\K(E,w))$ with respect to the \emph{natural $\mathbb Z$-grading} given by $$ \deg(v)=0,\qquad \deg(e_i)=1,\qquad \deg(e_i^*)=-1, $$ for every $v\in E^0$, $e\in E^1$, and $1\leq i\leq w(e)$. For arbitrary weighted graphs, however, a corresponding description of $\mathcal V^{\mathrm{gr}}(L_\K(E,w))$ with respect to the standard $\mathbb Z^{\lambda(E,w)}$-grading is not presently available. We do not address this problem in full here. Instead, taking advantage of the reduction available under Condition~(LPA), we show that Preusser's construction is compatible with the standard grading after suitable degrees are assigned to the edges of the resulting unweighted graph. Hence, we have the following lifting of \cite[Theorem 1]{Preusser2019WeightedLP} in the graded setting.

\begin{theorem} \label{theo:gradedisomorphism_of_LPA_graphs}
Let $(E,w)$ be row-finite (LPA) weighted graph and $\lambda(E,w)<\infty $. Then there exists row-finite unweighted graph $F$ and a $\mathbb{Z}^{\lambda(E,w)}$-grading on $L_\K(F)$ such that $ L_\K(E,w)\cong_{\mathrm{gr}}L_\K(F) $ as $\mathbb Z^{\lambda(E,w)}$-graded $\K$-algebras. \end{theorem}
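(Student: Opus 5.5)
Take $F$ to be the $\mathcal{P}2$-graph of the $\mathcal{P}1$-weighted graph $(E',w')$, as in Lemma~\ref{lem:P1P2isomorphic}. Rather than construct a new isomorphism, I would show that Preusser's explicit isomorphisms $\phi_1\colon L_\K(E,w)\to L_\K(E',w')$ (Step $\mathcal{P}1$) and $\phi_2\colon L_\K(E',w')\to L_\K(F)$ (Step $\mathcal{P}2$) from \cite[Lemmas 9 and 11]{Preusser2019WeightedLP} send generators to elements that are homogeneous for a suitable grading. The strategy is to transport the standard grading along them. First I would put a $\mathbb{Z}^{\lambda(E,w)}$-grading on $L_\K(E',w')$ making $\phi_1$ graded. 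Then I would define a grading on the free path algebra $P_\K(\widehat F)$ by assigning to each edge of $F$ the degree of the element of $L_\K(E',w')$ it corresponds to under $\phi_2^{-1}$, and to each ghost edge the negative of that degree.

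For Step $\mathcal{P}1$, recall that the reversed copies $e^{(i)}$ ($e\in s^{-1}(Z)$, $1\le i\le w(e)$) correspond, up to Preusser's normalisation, to the ghost elements $e_i^*$. I would therefore set $\deg e^{(i)}=-\epsilon_i$ and $\deg (e^{(i)})^*=\epsilon_i$. Edges outside $s^{-1}(Z)$ are unchanged, so their pieces keep $\deg e_i=\epsilon_i$. In Step $\mathcal{P}2$, for a weighted edge $g=g^v$ of weight $n$, the edge $(g^v)^{(1)}$ from $s(g)$ to $v^{(1)}$ plays the role of $g_1$, so I give it degree $\epsilon_1$. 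Each reversed edge $(g^v)^{(i)}$, $i\ge2$, from $v^{(i)}$ to $s(g)$ plays the role of $g_i^*$ composed with a vertex idempotent, so I give it degree $-\epsilon_i$. An unweighted edge $e$ with $r(e)=v$ is split into $e^{(i)}$ ending at $v^{(i)}$, and the image of $e^{(i)}$ in $L_\K(E',w')$ should be something like $e\,g_i g_1^*$ or $e\, g_i^* \cdots$. I would then set $\deg e^{(i)}=\deg e+\epsilon_i-\epsilon_1$, where $\deg e=\epsilon_1$ or $-\epsilon_j$ is inherited from Step $\mathcal{P}1$. All other vertices and edges keep their degrees.

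Next I would check that the relations (V), (E), (CK1) and (CK2) of $L_\K(F)$ are homogeneous for this assignment. (CK1) is automatic, since $\deg e^*=-\deg e$. (CK2) at a vertex requires all terms $ff^*$ to have degree $0$, which also holds automatically. Hence the grading descends to $L_\K(F)$. Since Preusser's maps take generators to homogeneous elements of matching degree, they are graded homomorphisms. A bijective graded homomorphism is a graded isomorphism. Composing the two steps gives $L_\K(E,w)\cong_{\mathrm{gr}}L_\K(F)$. Row-finiteness of $F$ is immediate, because each vertex gains only finitely many edges.

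The main obstacle is reading off Preusser's exact formulas for $\phi_1$ and $\phi_2$ and confirming that every image of a generator is homogeneous, in particular the images of the split edges $e^{(i)}$ in $B$ and of the vertices $v^{(i)}$. If some image turns out to be a sum of terms of different degrees, I would instead prove homogeneity of the inverse map: express each generator $e_i$, $e_i^*$ of $L_\K(E,w)$ as a single path or ghost path in $F$, which is possible by (LPA2) and (LPA4), and define the grading on $F$ by declaring those paths to have degree $\pm\epsilon_i$.
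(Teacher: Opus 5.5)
\textbf{Verdict: right strategy, but one degree assignment is wrong, so $\phi_2$ is not graded as you set it up.}

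Your strategy is the paper's. You take $F$ to be the $\mathcal{P}2$-graph of $(E',w')$ and give the reversed $\mathcal{P}1$-edges $e^{(i)}$ degree $-\epsilon_i$. You give the split weighted edges degree $\epsilon_1$ for $(g^v)^{(1)}$ and $-\epsilon_i$ for $(g^v)^{(i)}$, $i\ge 2$. You then check that Preusser's $\phi_1$ and $\phi_2$ send generators to homogeneous elements of the right degree.

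The step that fails is the degree of the split unweighted edges $e^{(i)}\in B$. Preusser's $\phi_2$ sends an unweighted edge $e$ with $r(e)=v\in r(E'_{w'})$ to $\sum_{i=1}^{n}e^{(i)}$, where $n=w'(g^v)$. So for $\phi_2$ to be graded, every $e^{(i)}$ must have the same degree $\deg e$. With your choice $\deg e^{(i)}=\deg e+\epsilon_i-\epsilon_1$, the image of $e$ is inhomogeneous, since $n\ge 2$ always. Hence $\phi_2$ is not a graded map.

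\textbf{The correct computation.} Write $g=g^v$. Relation (wCK2) at $s(g)$ gives $g_ig_i^*=s(g)$ for $i\ge 2$ and $g_ig_j^*=0$ for $i\ne j$. Relation (wCK1) gives $\sum_i g_i^*g_i=v$. So the elements $g_i^*g_i$ are orthogonal idempotents of degree $-\epsilon_i+\epsilon_i=0$ with $\phi_2(g_i^*g_i)=v^{(i)}$. Therefore $\phi_2^{-1}(e^{(i)})=e\,g_i^*g_i$, which has degree $\deg e$.

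Your guessed preimage $e\,g_ig_1^*$ is actually $0$, because $r(e)=v$ is a sink distinct from $s(g)$. Your fallback of writing each generator as a single path in $F$ does not help either, because $e$ really maps to a sum of $n$ distinct edges. Your own principle (give each edge of $F$ the degree of its preimage under $\phi_2$), applied correctly, already yields the fix: set $d(e^{(i)})=\deg e$ for all $i$. This is exactly the paper's choice $d(e^{(j)})=\deg(e_1)$.

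With that correction the rest of your argument goes through as in the paper. (CK1) and (CK2) are homogeneous for any edge-degree function on $F$, and a bijective graded homomorphism is a graded isomorphism.
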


\begin{proof} 
Let $(E,w)$ be an {\rm (LPA)} weighted graph with $\lambda(E,w)<\infty$, and let $$ (E,w)\xlongrightarrow{~\mathcal{P}1~}(E',w')\xlongrightarrow{~\mathcal{P}2~} F $$ be Preusser's two-step construction described in Subsection~\ref{subsec:2-Step_(LPA)Graphs}. 
Put $\lambda=\lambda(E,w)$.
We define a map $ d:F^1\longrightarrow\mathbb Z^{\lambda} $ such that $ L_\K(E,w)\cong_{\mathrm{gr}}L_\K(F) $ as $\mathbb Z^{\lambda}$-graded $\K$-algebras, where $L_\K(E,w)$ is equipped with its standard grading and $L_\K(F)$ is graded defined by $ \deg(f)=d(f)$, $\deg(f^*)=-d(f)$ for every $f\in F^1$.

For Step~$\mathcal{P}1$, let $Z=T(r(E_w))$. Let $e\in E^1$. If $s(e)\in Z$, the edge $e$ is replaced by the unweighted edges $e^{(1)},\ldots,e^{(w(e))}$ with reversed orientation. Assign $\deg(e^{(i)}_1)=-\epsilon_i $. If $s(e)\notin Z$, retain $\deg(e_i)=\epsilon_i$. Moreover, set $\deg(v)=0$ for all $v\in E^0$ and $\deg(f^*)=-\deg(f)$ for all $f\in (E')^1$. 
By \cite[Lemma 9]{Preusser2019WeightedLP}, the Step~$\mathcal P1$  isomorphism $$ \phi_1:L_\K(E,w)\longrightarrow L_\K(E',w') $$ satisfies 
$$ \phi_1(e_i)= 
\begin{cases} 
(e^{(i)}_1)^*,&s(e)\in Z,\\ 
e_i,&s(e)\notin Z. 
\end{cases} $$ Thus, if $s(e)\in Z$, $ \deg\big((e^{(i)}_1)^*\big)=\epsilon_i=\deg(e_i)$, and otherwise the degree is unchanged. Hence $\phi_1$ is a $\mathbb{Z}^\lambda$-graded isomorphism.

For Step~$\mathcal P2$, we define the grading on $F$ from the construction of its edges. If an unweighted edge $e$ remains unchanged, set $d(e)=\deg(e_i)$. If $e$ is replaced by $e^{(1)},\ldots,e^{(m)}$, set $ d(e^{(j)})=\deg(e_1) $ for every $j$. Finally, if $e$ is weighted, set $ d(e^{(1)})=\deg(e_1)$ and $d(e^{(i)})=-\deg(e_i)$ for each $i> 1$. Under Step~$\mathcal P2$ the isomorphism $$ \phi_2:L_\K(E',w')\longrightarrow L_\K(F) $$ in \cite[Lemma 11]{Preusser2019WeightedLP}, an edge generator $e_i$ is sent either to an edge, to a sum of edges of the same degree, or, for $i>1$ on a weighted edge, to $(e^{(i)})^*$. By the definition of $d$, in each case $$ \deg(\phi_2(e_i))=\deg(e_i). $$ 
The corresponding statement holds for ghost edges, while vertices are sent to homogeneous elements of degree zero. Therefore $\phi_2$ is a  $\mathbb{Z}^\lambda$-graded isomorphism. Consequently, $\phi_2\phi_1$ gives the required $\mathbb{Z}^\lambda$-graded isomorphism $ L_\K(E,w)\cong_{\mathrm{gr}}L_\K(F). $\end{proof}

\begin{remark} 
The grading on $L_\K(F)$ in the preceding theorem is, in general, not the usual $\mathbb Z$-grading of an unweighted Leavitt path algebra. The degree of an edge of $F$ is determined by its origin in Preusser's construction. 
\end{remark}

For a group $\Gamma$ and a commutative monoid $M$, we say $M$ is a \emph{$\Gamma$-monoid} if there is a linear $\Gamma$-action on $M$. Hence, under the $\mathbb{Z}^{\lambda(E,w)}$-grading, $\mathcal{V}^{\operatorname{gr}}(L_\K(E,w))$ and $\mathcal{V}^{\operatorname{gr}}(L_\K(F))$ become  $\mathbb{Z}^{\lambda(E,w)}$-monoids and we have the following direct consequence.

\begin{corollary} \label{cor:Vgr(L(E,w))=Vgr(L(F))}
Let $(E,w)$ be a row-finite weighted graph with condition~(LPA) and $\lambda(E,w)<\infty$, and let $F$ be the row-finite unweighted graph as in Theorem \ref{theo:gradedisomorphism_of_LPA_graphs} then as $\mathbb Z^{\lambda(E,w)}$-monoids, $ \mathcal V^{\mathrm{gr}}(L_\K(E,w)) \cong \mathcal V^{\mathrm{gr}}(L_\K(F))$.   
\end{corollary}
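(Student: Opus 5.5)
The plan is to deduce the corollary directly from Theorem~\ref{theo:gradedisomorphism_of_LPA_graphs}, using only the functoriality of $\mathcal V^{\mathrm{gr}}$ with respect to graded isomorphisms. Let $\phi=\phi_2\phi_1\colon L_\K(E,w)\to L_\K(F)$ be the $\mathbb Z^{\lambda}$-graded isomorphism constructed there, with $\lambda=\lambda(E,w)$. Here $L_\K(F)$ carries the grading $\deg(f)=d(f)$ and $\deg(f^*)=-d(f)$. Since $\phi$ preserves degrees, restriction and extension of scalars along $\phi$ give mutually inverse functors
\[
\phi^*\colon \operatorname{Gr}\text{-}L_\K(F)\longrightarrow \operatorname{Gr}\text{-}L_\K(E,w),\qquad -\otimes_{L_\K(E,w)}L_\K(F),
\]
between the categories of graded right modules. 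In other words, a graded module $P$ over $L_\K(F)$ is regarded as a graded module over $L_\K(E,w)$ via $p\cdot x=p\,\phi(x)$, with the same homogeneous components $P_\gamma$.

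The first step is to check that this equivalence preserves the properties used to define $\mathcal V^{\mathrm{gr}}$: being finitely generated, graded projective, and the relation of graded isomorphism.
\begin{enumerate}
\item Finite generation and unitality are clearly preserved, because $\phi$ is a unital isomorphism.
\item For graded projectivity, I would use the characterization as a graded direct summand of a finite sum of shifted free modules $\bigoplus_i L_\K(F)(\gamma_i)$. Under $\phi^*$, the module $L_\K(F)(\gamma)$ goes to $L_\K(E,w)(\gamma)$, since $\phi$ maps $L_\K(E,w)_{\gamma'}$ onto $L_\K(F)_{\gamma'}$ for every $\gamma'$.
\item Graded homomorphisms correspond to graded homomorphisms, so graded isomorphism classes are matched bijectively.
\end{enumerate}
This gives a bijection $\mathcal V^{\mathrm{gr}}(L_\K(F))\to\mathcal V^{\mathrm{gr}}(L_\K(E,w))$, $[P]\mapsto[\phi^*P]$. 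It is additive because $\phi^*$ commutes with direct sums, and it sends $0$ to $0$, so it is a monoid isomorphism.

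The remaining step is compatibility with the $\mathbb Z^{\lambda}$-actions. On both monoids, $\gamma\in\mathbb Z^\lambda$ acts by shift, $[P]\mapsto[P(\gamma)]$ with $P(\gamma)_\delta=P_{\gamma+\delta}$. Since $\phi^*$ does not change the underlying graded vector space, we have $\phi^*(P(\gamma))=(\phi^*P)(\gamma)$ on the nose. Hence the bijection is $\mathbb Z^\lambda$-equivariant, and therefore an isomorphism of $\mathbb Z^{\lambda}$-monoids.

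I do not expect any real obstacle. The only point needing care is that the grading on $L_\K(F)$ is the nonstandard one from Theorem~\ref{theo:gradedisomorphism_of_LPA_graphs}, not the usual $\mathbb Z$-grading. So the statement must be read with $\mathcal V^{\mathrm{gr}}(L_\K(F))$ computed with respect to that $\mathbb Z^\lambda$-grading, and I would state this explicitly in the proof. All of the substance lies in the earlier verification that $\phi_1$ and $\phi_2$ are degree-preserving.
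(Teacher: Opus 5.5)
Your proposal is correct and is the argument the paper intends. The paper gives no separate proof: it treats the corollary as an immediate consequence of the graded isomorphism $\phi_2\phi_1$ of Theorem~\ref{theo:gradedisomorphism_of_LPA_graphs} together with shift-compatible functoriality of $\mathcal V^{\mathrm{gr}}$. Your explicit point that $\mathcal V^{\mathrm{gr}}(L_\K(F))$ must be computed with respect to the nonstandard grading $d$ matches the paper's remark after that theorem.

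One cosmetic fix is needed because $E^0$ may be infinite, so $L_\K(E,w)$ need not be unital. Replace ``$\phi$ is a unital isomorphism'' by the observation that any ring isomorphism preserves the condition $P\,L_\K(F)=P$. Alternatively, note that $\phi^*$ is an isomorphism of categories of graded unital modules, so it preserves projective objects and finite generation without any appeal to free modules $L_\K(F)(\gamma)$.
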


Hazrat \cite{Hazrat:2013_GradedGrothendieckGroupClassificationLeavittPathAlgebras} posed the \emph{Graded Classification Conjecture} for Leavitt path algebras which claims that for the (unweighted) Leavitt path algebras, the \emph{graded Grothendieck group} (which is obtained by completing the $\mathcal{V}^{\operatorname{gr}}$-monoid to a group) under the standard grading along with its ordering and its module structure is a complete invariant and which was also  proved  therein for the class of \emph{polycephaly graphs}; see \cite{Hazrat:2013_GradedGrothendieckGroupClassificationLeavittPathAlgebras,hazrat2020talented, Bock_Hazrat_Sebandal2025} for some of the works on the conjecture. In \cite{dhn2026}, the conjecture's extension to weighted Leavitt path algebras of finite vertex-weighted graphs under the natural $\mathbb{Z}$-grading was investigated. In light of Corollary \ref{cor:Vgr(L(E,w))=Vgr(L(F))}, can tools and techniques in the unweighted case be applied to (LPA) weighted case under the standard $\mathbb{Z}^\lambda$-grading?

\section*{Acknowledgements}

The first author gratefully acknowledges Dr.~Ardeline Mary Buhphang, their Ph.D. supervisor, for her guidance and support during the preparation of this work. The second author acknowledges Dr.~Christopher Bernido of the Research Center for Theoretical Physics (Central Visayan Institute Foundation) and Dr. Håkan Sollervall of Linnaeus University for hosting her during the research fellowships at their respective institutions, as well as Dr. Berlita Disca of Mindanao State University -- General Santos City, Dr. Vyachelsav Futorny of Southern University of Science and Technology, and Dr. Ardeline Mary Buhphang of North-Eastern Hill University for the research stays which allowed the completion of this paper. Both authors would also like to thank Mr. Remarl Joseph Damalerio of MSU -- General Santos City for the insightful discussions with the second author, which led to the paper's  improvement.

\printbibliography

\end{document}